\def\eqref#1{equation~(\ref{#1})}
\def\floor#1{\left\lfloor #1 \right\rfloor}
\def\1{\bf{1}}
\newcommand{\Norm}[1]{\left\| #1 \right\|}
\def\inner#1#2{\left\langle #1, #2 \right\rangle}
\def\eps{{\epsilon}}
\def\vone{{\bf{1}}}
\def\fA{{\mathcal{A}}}
\def\fD{{\mathcal{D}}}
\def\fO{{\mathcal{O}}}
\def\fS{{\mathcal{S}}}
\def\BE{{\mathbb{E}}}
\def\BN{{\mathbb{N}}}
\def\BR{{\mathbb{R}}}
\newcommand{\E}{\mathbb{E}}
\newcommand{\R}{\mathbb{R}}
\theoremstyle{plain}
\newtheorem{thm}{Theorem}
\newtheorem{dfn}{Definition}
\newtheorem{lem}{Lemma}
\newtheorem{asm}{Assumption}
\newtheorem{remark}{Remark}
\newtheorem{cor}{Corollary}
\def\Ddots{\mathinner{\mkern1mu\raise\p@
\vbox{\kern7\p@\hbox{.}}\mkern2mu
\raise4\p@\hbox{.}\mkern2mu\raise7\p@\hbox{.}\mkern1mu}}
\newcommand*{\rom}[1]{\expandafter\@slowromancap\romannumeral #1@}
\title {Optimal Asynchronous Stochastic Nonconvex Optimization \\ under Heavy-Tailed Noise}
\date{}
\author{Yidong Wu \qquad\qquad Luo Luo}
\begin{document}
\maketitle

\begin{abstract}
This paper considers the problem of asynchronous stochastic nonconvex optimization with heavy-tailed gradient noise and arbitrarily heterogeneous computation times across workers.
We propose an asynchronous 
normalized stochastic gradient descent algorithm with momentum.
The analysis show that our method achieves the optimal time complexity under the assumption of bounded $p$th-order central moment with $p\in(1,2]$.
We also provide numerical experiments to show the effectiveness of proposed method.
\end{abstract}

\section{Introduction}
This paper studies stochastic optimization problem 
\begin{align}\label{main:prob}
\min_{x\in\mathbb{R}^d}f(x):= \mathbb{E}_{\xi\sim\mathcal{D}}[F(x;\xi)],    
\end{align}
where $\xi$ is the random variable that follows distribution $\fD$ and $F(x;\xi)$ is the stochastic component which is smooth but possibly nonconvex.
We consider the distributed setting that one server and $n$ workers collaboratively solve problem~(\ref{main:prob}).
We focus on the asynchronous scenario that each worker can access distribution $\fD$ to achieve the unbiased stochastic gradient estimate for the objective with heavy-tailed noise under arbitrarily heterogeneous computation time.

In practical distributed systems, workers may perform the heterogeneous delays due to differences in hardware and network communications \citep{horvath2021fjord,kairouz2021advances,ryabinin2021moshpit,dean2013tail}.
The asynchronous stochastic first-order methods are popular to address this issue, which allows each worker to send its stochastic gradient estimate to server immediately once the local computation is finished,
and the server can perform the update based on the currently received data without waiting for other workers \citep{recht2011hogwild,tsitsiklis2003distributed,ananthanarayanan2013effective,agarwal2011distributed,feyzmahdavian2016asynchronous,shi2024ordered,lian2018asynchronous,dutta2018slow}.
This kind of strategies keeps all workers busy and their empirical performance is superior to the synchronous methods in many practical situations \cite{assran2020advances}.

For single machine scenario, the vanilla stochastic gradient descent (SGD) method \cite{robbins1951stochastic,ghadimi2013stochastic,bottou2018optimization,bottou2007tradeoffs} attains the optimal stochastic first-order oracle complexity for finding approximate stationary points of the smooth nonconvex objective under bounded-variance assumption for the stochastic gradient estimate \citep{arjevani2023lower}.
For distributed setting, the performance of asynchronous optimization stochastic algorithms additionally depends on the delays caused by straggling workers.
In seminal works, \citet{mishchenko2022asynchronous,koloskova2022sharper} investigated asynchronous SGD (ASGD) under the fixed computation model where  the time complexity required for each worker to access a single stochastic gradient estimate has a fixed but worker-dependent upper bound.
Later, \citet{tyurin2023optimal} proposed Rennala SGD 
by including an asynchronous mini-batch collection strategy \citep{dutta2018slow}, which additionally achieves the optimal time complexity for a more general universal computation model, i.e., 
the time complexity for each worker to access a single stochastic gradient is not upper bounded by the fixed values but can change arbitrarily in time \citep{tyurin2025tight}.
Recently, \citet{pmlr-v267-maranjyan25b} established Ringmaster ASGD by introducing a delay threshold for the server to discard the stale stochastic gradient estimates, also achieving the optimal time complexity to both fixed and universal computation models.
The updates of Ringmaster ASGD is performed immediately upon receiving a fresh but not too outdated stochastic gradient, which aligns with the ideas of lock-free asynchronous and achieves the better empirical performance than Rennala SGD.
Note that all above algorithms and theory depend on the assumption of bounded variance, which does not always hold in practice \cite{laurent2024linear,battash2024revisiting,zhang2020adaptive,gurbuzbalaban2021,garg2021proximal,cayci2023provably,zhu2024robust,simsekli2019}. 

In practice, heavy-tailed gradient noise is widely observed in empirical studies of popular applications such as the language model training 
\cite{laurent2024linear,battash2024revisiting,zhang2020adaptive,gurbuzbalaban2021} and reinforcement learning \cite{garg2021proximal,cayci2023provably,zhu2024robust,simsekli2019}, going beyond the standard bounded-variance assumption.
In this regime, the vanilla SGD may fail to converge because it is easily influenced by the large gradient noise \cite{zhang2020adaptive}.
The algorithms including clipped and normalized SGD and their variants are broadly
used to deal with heavy-tailed gradient noise in training process \citep{gorbunov2020stochastic,gorbunov2022clipped,sadiev2023high,gorbunov2024high,zhang2020adaptive,nguyen2023high,cutkosky2020momentum,liu2023stochastic,hubler2025,liunonconvex2025,sun2024gradient,he2025complexity,liu2025stochastic}.
Specifically, tight complexity bounds for stochastic first-order optimization under heavy-tailed noise have been established for the single machine scenario under the assumption of finite $p$th central moment with $p\in(1,2]$ \cite{liu2023stochastic,hubler2025,sadiev2023}. 
Several recent works studied distributed stochastic optimization with heavy-tailed noise \cite{yu2025decentralized,sun2025distributed,zhang2025federated,wang2025near}, while their results are only restricted to the synchronous case. 

In this paper, we establish the first optimal asynchronous stochastic nonconvex optimization under heavy-tailed noise with the assumption of bounded $p$th central moment.
We summarize  main contributions of our work as follows.
\begin{itemize}[leftmargin=0.5cm,topsep=-0.15cm,itemsep=-0.1cm]
\item We design a novel asynchronous stochastic first-order  method by incorporating the momentum and the step of normalization into updates on server.
We also discard the stale stochastic gradients to address the possibly straggling workers.
\item We prove that our method can find an $\epsilon$-stationary point with the time complexity of
\begin{equation*}
\fO\left(
\min_{m\in[n]}
\left(\frac{1}{m}\sum_{i=1}^m \frac{1}{\tau_i}\right)^{-1}
\left(
\frac{L\Delta}{\epsilon^{2}}
+
\frac{L\sigma^{\frac{p}{p-1}}\Delta}{m \epsilon^{\frac{3p-2}{p-1}}}
\right)\right)
\end{equation*}
for the fixed computation model,
where $n$ is the number of workers, $\tau_i>0$ is the time to compute a single stochastic gradient at the $i$th worker, $L>0$ is the smoothness parameter, $\Delta>0$ is the initial function value gap, $\sigma>0$ is the noise level, and $p\in(1,2]$ is the order of moment.
\item We then provide the lower bound on the time complexity to show the above upper bounds is tight for the fixed computation model.
\item We further extend our results to the setting of general computational dynamics, also achieving the optimal time complexity bound.
\end{itemize}

\section{Preliminaries}
\label{sec:pre}

In this section, we formalize the assumptions on problem~(\ref{main:prob}), then introduce the computational models considered for asynchronous optimization algorithms.

We impose the following assumptions for the objective.
\begin{asm}\label{ass:smooth}
We suppose the objective $f: \mathbb{R}^d \to \mathbb{R}$ is $L$-smooth and lower bounded, i.e., there exists $L>0$ such that
$\| \nabla f(x) - \nabla f(y) \| \le L \| x - y \|$
for all $x, y \in \mathbb{R}^d$
and it holds $f^\star:=\inf_{x\in\BR^d}f(x)>-\infty$.
\end{asm}

We suppose each worker can access the stochastic first-order oracle with $p$th bounded central moment ($p$-BCM) \citep{zhang2020adaptive}.

\begin{asm}[\citet{zhang2020adaptive}]\label{ass:pBCM}
We suppose each worker can draw $\xi\sim\fD$ to achieve an unbiased stochastic gradient estimate $\nabla F(x;\xi)$ with  the $p$th bounded central moment for given $x\in\BR^d$, i.e., there exists $p\in(1,2]$ and $\sigma\geq 0$ such that it holds
$\mathbb{E} [ \nabla F(x; \xi)] = \nabla f(x)$ and 
$\mathbb{E}\left[\|\nabla F(x; \xi) - \nabla f(x) \|^p\right] \le \sigma^p$ for all $x\in\BR^d$.
\end{asm}

We consider the following fixed computation model for asynchronous stochastic optimization \cite{mishchenko2022asynchronous}.

\begin{asm}[{\citet{mishchenko2022asynchronous}}]\label{ass:time}
We suppose each worker $i\in[n]$ can take no more than~$\tau_i>0$ units of wall-clock time to draw $\xi\sim\fD$ and  access a single stochastic gradient estimate $\nabla F(x;\xi)$ for given $x\in\BR^d$.
\end{asm}

We are also interested in the universal computation model (UCM) as follows \citep{tyurin2025tight}.

\begin{asm}\label{ass:universalmodel}
For each worker $i\in[n]$, the number of stochastic gradients can be calculated from a time $t_0$ to a time $t_1$ is the Riemann integral of the computation power followed by the floor operation, i.e., 
\begin{align*}
\left\lfloor \int_{t_0}^{t_1} \upsilon_i(\tau)\,{\rm d}\tau \right\rfloor=\lfloor V_i(t_1)-V_i(t_0)\rfloor,
\end{align*}
where $\upsilon_i:\BR_+\to\BR_+$ is the computation power of worker $i$ which is continuous almost everywhere
and
\begin{align*}
 V_i(t):=\int_{0}^{t} \upsilon_i(\tau)\,{\rm d}\tau
 \end{align*}
defined on $t\in\R_{\ge 0}$ is the cumulative computation work function of worker $i$.
\end{asm}

\begin{remark}    
The integral of function $\upsilon_i$ over a given interval describes the computation work performed on worker $i$. 
Intuitively, the small $\upsilon_i$ indicates worker~$i$ performs
the less computation. 
Conversely, the large $\upsilon_i$ indicates worker~$i$ can perform the more computation.
In addition, the universal computation model (Assumption \ref{ass:universalmodel}) reduces to the fixed computation model (Assumption \ref{ass:time}) when
$\upsilon_i(t) = 1/\tau_i$ for all $t\geq 0$ and~$i\in[n]$ \citep{tyurin2025tight}. 
\end{remark}

\section{The Algorithm and Main Results}\label{sec:alg-main}

We proposed our Ringmaster Asynchronous Normalized Stochastic Gradient Descent with Momentum (RANSGDm) in Algorithm \ref{alg:ringmaster_nsgd_mom_server}.
The main steps of proposed algorithm on the server can be summarized as
\begin{align}\label{eq:main-steps}    
\begin{cases}
v_{k+1} \gets \beta_k v_k + (1-\beta) \nabla F(x_{k-\delta_k};\xi_{k-\delta_k}), \\[0.15cm]
x_{k+1} \gets x_k - \dfrac{\eta v_{k+1}}{\|v_{k+1}\|},
\end{cases}
\end{align}
which only performs when the delay $\delta_k$ is smaller than the threshold $R$. 
Here, the notation $\xi_{k-\delta_k}$ correspond to the sample $\xi_{k,i}$ achieved from line 23 of Algorithm \ref{alg:ringmaster_nsgd_mom_server} for establishing the gradient estimator at point $x_{k-\delta_k}$.
The key differences between RANSGDm and existing Ringmaster ASGD \citep{pmlr-v267-maranjyan25b} include that 
the update on $x_{k+1}$ contains the normalization and the direction $v_{k+1}$ involves a momentum correction to the stochastic gradient with the momentum parameter $\beta_k$ such that $\beta_k=\beta\in[0,1)$ when $k>1$ and $\beta_k=0$ otherwise.
It is worth noting that introducing  normalization and momentum is necessary, since heavy-tailed noise leads to simply iterating along with the stochastic gradient may fail to convergence even for the case of single machine optimization \cite{zhang2020adaptive}.
In addition, the delay threshold $R$ mitigates the effects of old stochastic gradient with heavy-tailed noise, which is also well compatible with techniques of normalization and momentum to make the algorithm be efficient.

We present the main theoretical result for our RANSGDm (Algorithm \ref{alg:ringmaster_nsgd_mom_server}) in the following theorem.

\begin{thm} 
\label{thm:main_mom_tight}
Under Assumptions \ref{ass:smooth} and \ref{ass:pBCM},
we run our RANSGDm (Algorithm \ref{alg:ringmaster_nsgd_mom_server}) by taking
\begin{align*}
\begin{split}    
R=\left\lceil \frac{1}{\alpha}\right\rceil,~\eta=\frac{\alpha\epsilon}{24L},~\beta=1-\alpha,~K=\left\lceil \frac{72L\Delta}{\alpha\epsilon^2}\right\rceil\!,
\end{split}
\end{align*}
where 
\begin{equation*}
\alpha
=\min\left\{1,\left(\frac{\epsilon}{3\cdot 2^{\frac{p+1}{p}}\sigma}\right)^{\frac{p}{p-1}}\right\},
\quad\Delta=f(x_0) - f^\star.
\end{equation*}
Then the output $\hat x\in\BR^d$ holds $\E\|\nabla f(\hat x)\|\le \epsilon$.
\end{thm}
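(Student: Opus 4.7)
The plan is to combine a descent analysis for normalized updates with a tight $p$-th moment control on the biased momentum error, leveraging the bounded-delay condition $\delta_k \leq R$ guaranteed by the Ringmaster threshold. Since $\|x_{k+1}-x_k\|=\eta$, applying $L$-smoothness together with the elementary inequality $\langle a, b/\|b\|\rangle \geq \|a\| - 2\|a-b\|$ gives the per-step descent
\begin{equation*}
f(x_{k+1}) \leq f(x_k) - \eta\|\nabla f(x_k)\| + 2\eta\|v_{k+1} - \nabla f(x_k)\| + \frac{L\eta^2}{2}.
\end{equation*}
Summing over the $K$ performed updates, rearranging, and taking expectations reduces the theorem to showing that $\Delta/(K\eta) + L\eta/2 + (2/K)\sum_{k=0}^{K-1}\mathbb{E}\|v_{k+1}-\nabla f(x_k)\| \leq \epsilon$ when $\hat{x}$ is chosen uniformly from $\{x_0,\ldots,x_{K-1}\}$.

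The core of the argument is bounding $\mathbb{E}\|v_{k+1}-\nabla f(x_k)\|$. I split
\begin{equation*}
\|v_{k+1} - \nabla f(x_k)\| \leq \|u_{k+1}\| + \|h_k - \nabla f(x_k)\|,
\end{equation*}
where $h_k := \nabla f(x_{k-\delta_k})$ and $u_{k+1} := v_{k+1}-h_k$. The bias $\|h_k-\nabla f(x_k)\|$ is at most $L\eta \delta_k \leq L\eta R$ by $L$-smoothness and the per-step displacement $\eta$. Unrolling the recursion
\begin{equation*}
u_{k+1} = \beta u_k + \beta (h_{k-1}-h_k) + (1-\beta) z_k,\quad z_k := \nabla F(x_{k-\delta_k};\xi_{k-\delta_k}) - h_k,
\end{equation*}
expresses $u_{k+1}$ as a geometrically-weighted martingale in the noises $\{z_j\}$, a drift term in $\{h_{j-1}-h_j\}$, and a decaying initial contribution.

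For the martingale part I apply the von Bahr--Esseen inequality with $p\in(1,2]$ to obtain $\mathbb{E}\|(1-\beta)\sum_j \beta^{k-j} z_j\|^p \leq 2(1-\beta)^p \sigma^p/(1-\beta^p) \leq 2\sigma^p (1-\beta)^{p-1}$, using $1-\beta^p\geq 1-\beta$ for $p\geq 1$; then Jensen's inequality yields a first-moment bound of order $\sigma\alpha^{(p-1)/p}$. For the drift, Abel summation gives
\begin{equation*}
\sum_{j=1}^{k}\beta^{k-j}(h_{j-1}-h_j) = \beta^{k-1}(h_0-h_k) + (1-\beta)\sum_{j=1}^{k-1}\beta^{k-1-j}(h_j-h_k),
\end{equation*}
and combining $\|h_j-h_k\|\leq L\eta(|j-k|+R)$ with $\sum_{l\geq 1} l \beta^{l-1}(1-\beta) = 1/(1-\beta)$ bounds this deterministically by $O(L\eta(R + 1/\alpha)) = O(L\eta R)$. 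Putting the pieces together yields $\mathbb{E}\|v_{k+1}-\nabla f(x_k)\|\lesssim L\eta R + \sigma\alpha^{(p-1)/p}$. The choices $\eta = \alpha\epsilon/(24L)$ and $R=\lceil 1/\alpha\rceil$ make $L\eta R$ and $L\eta/2$ constant fractions of $\epsilon$; the choice $\alpha = \min\{1,(\epsilon/(3\cdot 2^{(p+1)/p}\sigma))^{p/(p-1)}\}$ makes $\sigma\alpha^{(p-1)/p}$ a constant fraction of $\epsilon$; and $K \geq 72L\Delta/(\alpha\epsilon^2)$ forces $\Delta/(K\eta)\leq \epsilon/3$. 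Summing gives $\mathbb{E}\|\nabla f(\hat{x})\|\leq \epsilon$.

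The hardest step will be rigorously setting up the martingale argument in the asynchronous regime: $\xi_{k-\delta_k}$ is drawn by a specific worker at an earlier server time, so one must construct a filtration on the server-side update index with respect to which the $z_j$'s are martingale differences before von Bahr--Esseen can be invoked. The bounded-delay threshold $R$ is crucial here, both to keep the history finite and to enable the telescoping drift bound. A secondary technicality is the Abel-summation bookkeeping, since delay-shifted iterates satisfy $\|x_{j-\delta_j}-x_{k-\delta_k}\|\leq \eta(|j-k|+R)$ rather than $\eta|j-k|$, producing the extra $L\eta R$ term that has to be absorbed cleanly by the parameter choices.
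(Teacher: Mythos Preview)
Your proposal is correct and shares the paper's high-level scaffold: the normalized descent step, the $p$-th moment martingale bound via von Bahr--Esseen (yielding the $2^{1/p}\sigma(1-\beta)^{(p-1)/p}$ noise term), and the final parameter balancing are all identical to the paper.

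The one substantive difference is in how you decompose the momentum error. You pivot at the \emph{stale} gradient $h_k=\nabla f(x_{k-\delta_k})$, split $\|v_{k+1}-\nabla f(x_k)\|\le\|v_{k+1}-h_k\|+\|h_k-\nabla f(x_k)\|$, and then unroll $u_{k+1}=\beta u_k+\beta(h_{k-1}-h_k)+(1-\beta)z_k$. Because the drift term now compares consecutive \emph{stale} iterates (with $\|z_{j-1}-z_j\|$ as large as $\eta(1+2R)$), you need Abel summation plus the bound $\|h_j-h_k\|\le L\eta(|j-k|+R)$ to recover the $O(L\eta/\alpha+L\eta R)$ drift estimate. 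The paper instead pivots at the \emph{fresh} gradient $\nabla f(x_k)$ and writes
\[
\mu_k=\beta_k\mu_{k-1}+\underbrace{\beta_k(\nabla f(x_{k-1})-\nabla f(x_k))}_{\le \beta L\eta}+\underbrace{(1-\beta)(\nabla f(z_k)-\nabla f(x_k))}_{\le (1-\beta)LR\eta}+(1-\beta)\zeta_k,
\]
so that after the geometric unrolling each deterministic piece is bounded term-by-term, giving $L\eta/(1-\beta)+LR\eta$ directly with no Abel summation. This is cleaner (and yields one fewer $LR\eta$ term), but your route is equally valid and reaches the same order bound; the Abel step is simply extra work that the better pivot avoids. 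Your concern about the filtration is also handled in the paper exactly as you anticipated, via $\mathcal F_t=\sigma(x_0,z_1,\dots,z_t,\zeta_1,\dots,\zeta_t)$ with $z_t$ being $\mathcal F_{t-1}$-measurable.
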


Theorem \ref{thm:main_mom_tight} indicates that our RANSGDm can achieve an $\epsilon$-stationary point by performing 
\begin{align}\label{eq:main-K}
K=\fO\left(\frac{L\Delta}{\epsilon^2} + \frac{L\sigma^{\frac{p}{p-1}}\Delta}{\epsilon^{\frac{3p-2}{p-1}}}\right)    
\end{align}
rounds of updates on the server, where each round of update  corresponds to one execution of lines 10--13 in Algorithm~\ref{alg:ringmaster_nsgd_mom_server}.
The order of such $K$ aligns with the optimal first-order
oracle complexity of the synchronous stochastic nonconvex optimization in non-distributed setting  \cite{zhang2020adaptive,liu2023stochastic}.

For the asynchronous scenario, the number of update rounds on the server does not correspond to the wall-clock time. 
For the fixed computation model (Assumption~\ref{ass:time}),
the setting of delay threshold $R$ guarantees any $R$ consecutive rounds of updates on the worker takes at most  
\begin{equation}\label{eq:tR-main-fixed}
t(R) := 2 \min_{m\in[n]} \left( \frac{1}{m}\sum_{i=1}^m \frac{1}{\tau_i} \right)^{-1} \left( 1 + \frac{R}{m} \right)
\end{equation}
units of time \citep[Lemma 4.1]{pmlr-v267-maranjyan25b}, where~$\tau_i$ is the time complexity for achieving an stochastic gradient estimate at worker $i\in[n]$.
Hence, the algorithm requires the overall time complexity of $t(R)\cdot\lceil K/R \rceil$ for finding an $\epsilon$-stationary point.
Following results of Theorem~\ref{thm:main_mom_tight},
we combine equations (\ref{eq:main-K}) and (\ref{eq:tR-main-fixed}) to achieve the complexity for fixed computation model as follows.

\begin{algorithm}[tb]
\caption{Ringmaster-ANSGD-m}
\label{alg:ringmaster_nsgd_mom_server}
\begin{algorithmic}[1]
\STATE \textbf{Input:} threshold $R\in\mathbb{N}$, stepsize $\eta>0$, momentum parameter $\beta\in[0,1)$, and  iteration number $K\in\BN$.  
\STATE \textbf{\underline{server:}} \\[0.02cm]
\STATE\quad $x_0\in\mathbb{R}^d$,~~$v_0 \gets 0$,~~$k\gets 0$ \\[0.02cm]
\STATE\quad \textbf{while} $k<K$ \textbf{do} \\[0.02cm]
\STATE\quad\quad \textbf{if} receive a data request \textbf{then} \\[0.02cm]
\STATE\quad\quad\quad send $(x_k, k)$ \\[0.02cm]
\STATE\quad\quad \textbf{else if} receive $(g_k,\iota_k)$  \textbf{then} \\[0.02cm]
\STATE\quad\quad\quad $\delta_k \gets k - \iota_k$ \\[0.02cm]
\STATE\quad\quad\quad \textbf{if} $\delta_k < R$ \textbf{then} \\[0.02cm]
\STATE\quad\quad\quad\quad $\beta_k \gets \begin{cases}
          0,  & k\leq 1 \\[0.02cm]
          \beta,  & k > 1
          \end{cases}$ \\[0.02cm]
\STATE\quad\quad\quad\quad $v_{k+1} \gets \beta_k v_k + (1-\beta) g_k$ \\[0.02cm]
\STATE\quad\quad\quad\quad $x_{k+1} \gets x_k -  \dfrac{\eta v_{k+1}}{\|v_{k+1}\|}$ \\[0.02cm]
\STATE\quad\quad\quad\quad $k \gets k+1$ \\[0.02cm]
\STATE\quad\quad\quad \textbf{end if} \\[0.02cm]
\STATE\quad\quad \textbf{end if} \\[0.02cm]
\STATE\quad \textbf{end while} \\[0.02cm]
\STATE\quad \textbf{Output:} $\hat x\sim{\rm Unif}(\{x_0,\dots,x_{K-1}\})$ \\[0.03cm]
\STATE\underline{\textbf{worker} $i\in[n]$\textbf{:}} \\[0.02cm]
\STATE\quad \textbf{while} true \textbf{do}
\STATE\quad\quad send a data request  \\[0.02cm]
\STATE\quad\quad receive data $(x_k, k)$  \\[0.02cm]
\STATE\quad\quad $\xi_{k,i}\sim\fD$ \\[0.02cm]
\STATE\quad\quad $g_{k,i} \gets \nabla F(x_{k};\xi_{k,i})$ \\[0.02cm]
\STATE\quad\quad send $(g_{k,i},k)$  \\[0.02cm]
\STATE\quad \textbf{end while}
\end{algorithmic}
\end{algorithm}

\begin{cor}
\label{cor:time_mom_refined}
Under Assumptions \ref{ass:smooth}, \ref{ass:pBCM}, and \ref{ass:time}, 
we run Algorithm \ref{alg:ringmaster_nsgd_mom_server} by following the setting of Theorem \ref{thm:main_mom_tight},
then it achieves an $\epsilon$-stationary point with the time complexity of
\begin{equation*}
\fO\left(
\min_{m\in[n]}
\left[
\left(\frac{1}{m}\sum_{i=1}^m \frac{1}{\tau_i}\right)^{-1}
\left(
\frac{L\Delta}{\epsilon^{2}}
+
\frac{L\sigma^{\frac{p}{p-1}}\Delta}{m \epsilon^{\frac{3p-2}{p-1}}}
\right)
\right]\right).
\end{equation*}
\end{cor}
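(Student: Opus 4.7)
The plan is to stitch together two ingredients already at our disposal: Theorem \ref{thm:main_mom_tight}, which bounds the number of accepted server updates $K$, and the per-block wall-clock bound $t(R)$ from equation (\ref{eq:tR-main-fixed}) (due to \citep[Lemma 4.1]{pmlr-v267-maranjyan25b}). Under Assumption \ref{ass:time}, partitioning the $K$ accepted updates into $\lceil K/R\rceil$ blocks of length $R$ yields total wall-clock time at most $t(R)\cdot\lceil K/R\rceil$, and the task reduces to simplifying this product.

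First I would unpack the $K$ from Theorem \ref{thm:main_mom_tight} by a case split on $\alpha$. Since $1/\alpha=\max\{1,(3\cdot 2^{(p+1)/p}\sigma/\epsilon)^{p/(p-1)}\}$, using the identity $2+p/(p-1)=(3p-2)/(p-1)$ gives
\begin{equation*}
K \;=\; \fO\!\left(\frac{L\Delta}{\epsilon^2}+\frac{L\sigma^{p/(p-1)}\Delta}{\epsilon^{(3p-2)/(p-1)}}\right).
\end{equation*}
The crucial calibration in Theorem \ref{thm:main_mom_tight} is that $R=\lceil 1/\alpha\rceil$, so the noise-dependent blow-up in $K$ cancels against $R$ and yields $K/R=\fO(L\Delta/\epsilon^2)$.

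Next, substituting $t(R)=2\min_{m\in[n]}\bar\tau_m(1+R/m)$ with $\bar\tau_m=(m^{-1}\sum_{i=1}^m 1/\tau_i)^{-1}$ into $t(R)\cdot\lceil K/R\rceil$ and distributing the $(1+R/m)$ factor produces $\fO(\min_m \bar\tau_m (K/R+K/m))$. Inserting the bounds $K/R=\fO(L\Delta/\epsilon^2)$ and
\begin{equation*}
\frac{K}{m} \;=\; \fO\!\left(\frac{L\Delta}{m\epsilon^{2}}+\frac{L\sigma^{p/(p-1)}\Delta}{m\epsilon^{(3p-2)/(p-1)}}\right),
\end{equation*}
and noting that for every $m\geq 1$ the $L\Delta/(m\epsilon^2)$ summand is dominated by the $L\Delta/\epsilon^2$ appearing in $K/R$, delivers exactly the claimed bound.

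This corollary is a mechanical combination once Theorem \ref{thm:main_mom_tight} and the quoted lemma are in hand; I do not expect any genuine technical obstacle. The only spot requiring care is the cancellation between the $\sigma$-dependent part of $K$ and the threshold $R\asymp 1/\alpha$, which is what forces the heavy-tail term in the final bound to appear divided by $m$ while the deterministic $L\Delta/\epsilon^2$ term sits outside the $1/m$ factor.
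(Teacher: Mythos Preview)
Your proposal is correct and follows the same route as the paper: bound the total wall-clock time by $t(R)\lceil K/R\rceil$, expand via the cited lemma into $\fO(\min_m \bar\tau_m(K/R+K/m+R/m+1))$, and use $R\asymp 1/\alpha$ so that $K/R=\fO(L\Delta/\epsilon^2)$ while $K/m$ supplies the $\sigma$-dependent term divided by $m$. The only detail the paper spells out that you leave implicit is the harmless restriction $\epsilon\le\sqrt{2L\Delta}$ (otherwise $x_0$ is already $\epsilon$-stationary), which is what allows the residual $R/m$ and $+1$ terms from the ceiling to be absorbed into the stated bound.
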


For the universal computation model (Assumption \ref{ass:universalmodel}),  
any~$R$ consecutive rounds of updates from time $T_0$ on the worker takes at most
\begin{align*}
T(R,T_0):=
\min\left\{
T \ge 0: \frac{1}{4}\sum_{i=1}^{n}\int_{T_0}^{T} \upsilon_i(\tau)\,{\rm d}\tau \ge R
\right\}
\end{align*}
units of time \citep[Lemma 5.1]{pmlr-v267-maranjyan25b}.
We then combine the form of $T(R,T_0)$ with the setting of $K$ in equation (\ref{eq:main-K}) 
to achieve the time complexity for the universal computation model as follows.

\begin{cor}
\label{lem:universal_time_recursion_blocks}
Under Assumption \ref{ass:universalmodel}, we run Algorithm \ref{alg:ringmaster_nsgd_mom_server} by following the setting of Theorem \ref{thm:main_mom_tight},
then it can achieve an $\epsilon$-stationary point after at most $T_{\bar K}$ seconds, where 
\begin{align*}
\bar K := \left\lceil \frac{74L\Delta}{\epsilon^2} \right\rceil=\fO\left(
\frac{L\Delta}{\epsilon^{2}}\right),    
\end{align*}
and $T_{\bar K}$ is the ${\bar K}$th element of the following recursively defined sequence $\{T_K\}_{K\geq 1}$ such that $T_0=0$ and 
\begin{align*}
T_K := \min\left\{T\ge 0: \frac{1}{4}\sum_{i=1}^n \int_{T_{K-1}}^{T} \upsilon_i(\tau)\,{\rm d} \tau\ge R\right\}
\end{align*}
for all $K\ge 1$, 
where $R=\left\lceil 1/\alpha\right\rceil=\Theta(\lceil\sigma/\epsilon\rceil^{\frac{p}{p-1}})$.
\end{cor}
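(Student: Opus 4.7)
The plan is to combine Theorem \ref{thm:main_mom_tight} with Lemma 5.1 of \citet{pmlr-v267-maranjyan25b} that was recalled just before the statement. Theorem \ref{thm:main_mom_tight} guarantees that the server reaches an $\epsilon$-stationary point after at most
\begin{align*}
K=\left\lceil\frac{72L\Delta}{\alpha\epsilon^2}\right\rceil
\end{align*}
successful server updates (i.e.\ executions of lines 10--13 of Algorithm \ref{alg:ringmaster_nsgd_mom_server}), while the cited Lemma 5.1 asserts that, under the delay threshold $R$, any window of $R$ consecutive server updates initiated at wall-clock time $T_0$ finishes no later than
\begin{align*}
T(R,T_0):=\min\left\{T\ge 0:\frac{1}{4}\sum_{i=1}^n\int_{T_0}^{T}\upsilon_i(\tau)\,{\rm d}\tau\ge R\right\}.
\end{align*}
Iterating this block bound is exactly what the recursion defining $\{T_K\}$ does.

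First I would partition the $K$ server updates into $\lceil K/R\rceil$ consecutive blocks of at most $R$ updates each and denote the wall-clock time at which the $j$th block finishes by $T_j^\star$, with $T_0^\star:=0$. Applying the Maranjyan block bound to each block gives $T_j^\star\le T(R,T_{j-1}^\star)$. Since $\upsilon_i\ge 0$, the map $T_0\mapsto T(R,T_0)$ is nondecreasing: starting later leaves us with strictly less accumulated integral, so we need at least as large a $T$ to reach the threshold $R$. An induction on $j$ then yields $T_j^\star\le T_j$ for the sequence in the statement; in particular all $K$ updates complete by time $T_{\lceil K/R\rceil}$.

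It then remains to check $\lceil K/R\rceil\le\bar K$, which is pure bookkeeping. Using $K\le 72L\Delta/(\alpha\epsilon^2)+1$, $R\ge 1/\alpha$, and $\alpha\le 1$,
\begin{align*}
\frac{K}{R}\le \alpha\left(\frac{72L\Delta}{\alpha\epsilon^2}+1\right)=\frac{72L\Delta}{\epsilon^2}+\alpha\le\frac{72L\Delta}{\epsilon^2}+1,
\end{align*}
so $\lceil K/R\rceil\le 72L\Delta/\epsilon^2+2\le\lceil 74L\Delta/\epsilon^2\rceil=\bar K$ in the nontrivial regime $L\Delta/\epsilon^2\ge 1$; the degenerate regime is handled immediately by the ceiling in $\bar K$. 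Combining with monotonicity of $\{T_K\}$ yields a total wall-clock time of at most $T_{\bar K}$.

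The hard part is not any single step but the careful coupling of the three ingredients: the per-round count $K$ from Theorem \ref{thm:main_mom_tight}, the per-block time $T(R,\cdot)$ from Lemma 5.1 of \citet{pmlr-v267-maranjyan25b}, and the monotonicity argument that chains them. Once the monotonicity of $T(R,\cdot)$ in its second argument is established, the induction $T_j^\star\le T_j$ and the numerical check $\lceil K/R\rceil\le\bar K$ are routine.
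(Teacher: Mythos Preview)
Your proposal is correct and follows essentially the same route as the paper: partition the $K$ server updates into $\lceil K/R\rceil\le\bar K$ blocks of at most $R$ updates each, apply the block-time bound $T(R,\cdot)$ from \citet[Lemma~5.1]{pmlr-v267-maranjyan25b} to each block, and chain them via the recursion $\{T_K\}$. The only difference is that you make the monotonicity step $T_j^\star\le T_j$ explicit via an induction, whereas the paper handles it informally by noting that each block starts no later than the recursion's previous threshold.
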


\begin{remark}
In the case of $p=2$, the time complexity bounds achieved by our Corollaries \ref{cor:time_mom_refined} and \ref{lem:universal_time_recursion_blocks} match the optimal results for 
fixed and universal computation models under the bounded variance setting, respectively \citep[Theorems 4.2 and 5.1]{pmlr-v267-maranjyan25b}.
\end{remark}

\section{The Complexity Analysis}

In this section, we briefly sketch the proof of our main result in Theorem \ref{thm:main_mom_tight}, and the details are deffered to Appendix~\ref{sec:ringmaster_nsgd_momentum}.
Recall that update (\ref{eq:main-steps}) on the server depends on the stale  gradient estimate $\nabla F(x_{k-\delta_k};\xi_{k-\delta_k})$, so that we denote 
\begin{align*}
    z_k = x_{k-\delta_k}
\end{align*}
in the remainder of this paper.  

The analysis starts from the observation that is specific to our algorithm design, 
i.e., \emph{every round of update on the server moves by a fixed distance $\eta$ due to the step of normalization}.
Together with the delay threshold $R$, it immediately yields an upper bound on the staleness. Specifically, we have the following lemma.
\begin{lem}
\label{lem:staleness}
Based on Algorithm \ref{alg:ringmaster_nsgd_mom_server}, the variables on the worker holds 
\begin{align*}
\|x_k-z_k\| = \|x_k - x_{k-\delta_k} \| \le R\eta.
\end{align*}
for all $k\geq 1$.
\end{lem}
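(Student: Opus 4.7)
The plan is to exploit the crucial design feature of the algorithm that every server update is a normalized step. By the update rule on line 12, whenever the server actually advances, we have
\begin{equation*}
\|x_{j+1} - x_j\| = \left\| \frac{\eta v_{j+1}}{\|v_{j+1}\|} \right\| = \eta,
\end{equation*}
so each server iteration contributes exactly $\eta$ to the trajectory length (the edge case $v_{j+1}=0$ can be safely excluded since under heavy-tailed noise it is a measure-zero event, and if desired one may adopt the convention $v/\|v\|=0$ there without affecting the bound).

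With this in hand, I would apply the triangle inequality along the telescoping chain from index $k-\delta_k$ to index $k$. Since $\iota_k = k-\delta_k \ge 0$, these iterates are all well-defined, and
\begin{equation*}
\|x_k - x_{k-\delta_k}\| \;\le\; \sum_{j=k-\delta_k}^{k-1} \|x_{j+1} - x_j\| \;=\; \delta_k \cdot \eta.
\end{equation*}
Finally, the guard on line 9 of Algorithm \ref{alg:ringmaster_nsgd_mom_server} only permits an update to occur when $\delta_k < R$, so $\delta_k \le R-1 \le R$, and substituting gives $\|x_k - z_k\| \le R\eta$ as claimed.

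There is no serious obstacle here; the content of the lemma is essentially the observation that normalization converts the bounded-delay condition into a bounded-displacement condition in a one-line calculation. The conceptual point worth emphasizing in the write-up is that this is precisely why the normalization step is paired with the staleness threshold $R$: unlike standard asynchronous SGD where step sizes can vary dramatically with the gradient magnitude (and thus under heavy-tailed noise cause uncontrolled drift between $x_k$ and $z_k$), the normalized variant makes the displacement bound depend only on $R$ and $\eta$, both of which are tunable algorithm parameters. This decoupling is what allows the subsequent descent analysis to treat stale gradients as small perturbations of fresh ones.
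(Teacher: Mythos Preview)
Your proposal is correct and follows essentially the same argument as the paper: telescope $x_k - x_{k-\delta_k}$ into $\delta_k$ consecutive increments, use the triangle inequality, observe that each normalized step has length exactly $\eta$, and bound $\delta_k$ by $R$ via the guard on line~9. Your handling of the strict inequality $\delta_k < R \Rightarrow \delta_k \le R-1$ is slightly more careful than the paper's write-up, and the remark on the measure-zero event $v_{j+1}=0$ is a reasonable addition, but the substance is identical.
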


Based on the smoothness condition in Assumption~\ref{ass:smooth}, 
we describe the descent for the objective function value as follows.

\begin{lem}
\label{lem:descent_mom}
Under Assumptions~\ref{ass:smooth},
Algorithm \ref{alg:ringmaster_nsgd_mom_server}
holds
\begin{align*}
\E\big[f(x_{k+1})\big]
\le
\E\big[f(x_k)\big]
-\eta\E\|\nabla f(x_k)\|+2\eta\E\|v_{k+1}-\nabla f(x_k)\|
+\frac{L\eta^2}{2}
\end{align*}
for all $k\geq 0$.
\end{lem}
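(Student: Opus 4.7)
The plan is to run a one-step descent argument that combines $L$-smoothness with a feature specific to the algorithm design: the normalization in line~12 of Algorithm~\ref{alg:ringmaster_nsgd_mom_server} guarantees that every actual server update has step length exactly $\eta$. Concretely, I would apply Assumption~\ref{ass:smooth}, substitute $x_{k+1}-x_k = -\eta v_{k+1}/\|v_{k+1}\|$, and use $\|x_{k+1}-x_k\|=\eta$ to obtain
\begin{equation*}
f(x_{k+1})
\le
f(x_k)
- \eta\left\langle \nabla f(x_k),\, \frac{v_{k+1}}{\|v_{k+1}\|}\right\rangle
+ \frac{L\eta^2}{2}.
\end{equation*}

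The core task is then to control the inner-product term. The approach is the standard add-and-subtract identity used in normalized SGD with momentum: write
\begin{equation*}
-\left\langle \nabla f(x_k),\, \frac{v_{k+1}}{\|v_{k+1}\|}\right\rangle
=
-\|v_{k+1}\|
+ \left\langle v_{k+1}-\nabla f(x_k),\, \frac{v_{k+1}}{\|v_{k+1}\|}\right\rangle,
\end{equation*}
and then combine two elementary bounds. The reverse triangle inequality $\|v_{k+1}\| \ge \|\nabla f(x_k)\| - \|v_{k+1}-\nabla f(x_k)\|$ upper bounds the first term by $-\|\nabla f(x_k)\| + \|v_{k+1}-\nabla f(x_k)\|$, while Cauchy--Schwarz together with the fact that $v_{k+1}/\|v_{k+1}\|$ is a unit vector upper bounds the second term by $\|v_{k+1}-\nabla f(x_k)\|$. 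Adding these two estimates gives the clean inequality
\begin{equation*}
-\left\langle \nabla f(x_k),\, \frac{v_{k+1}}{\|v_{k+1}\|}\right\rangle
\le
-\|\nabla f(x_k)\| + 2\|v_{k+1}-\nabla f(x_k)\|.
\end{equation*}
Plugging this back into the smoothness inequality and taking expectations yields the statement of the lemma.

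I do not foresee a real obstacle here: the argument is essentially mechanical once the right decomposition is chosen, and it fully exploits the normalization (which both fixes the step length and, crucially, causes the factor $2$ rather than a factor depending on $\|\nabla f(x_k)\|/\|v_{k+1}\|$). The only pointwise subtlety is the measure-zero event $v_{k+1}=0$, where $v_{k+1}/\|v_{k+1}\|$ is undefined; I would handle it by the convention that no update is made in that case, so the bound holds trivially. The substantive work of the paper's analysis, namely controlling $\E\|v_{k+1}-\nabla f(x_k)\|$ under the $p$-BCM noise of Assumption~\ref{ass:pBCM}, the momentum recursion driving $v_{k+1}$, and the staleness bound of Lemma~\ref{lem:staleness}, is cleanly separated out of this descent step and is deferred to subsequent lemmas.
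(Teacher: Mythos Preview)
Your proposal is correct and essentially identical to the paper's proof: the paper applies $L$-smoothness, substitutes the normalized update, and then invokes Lemma~\ref{lem:angle} (the inequality $a^\top b/\|b\|\ge \|a\|-2\|a-b\|$ with $a=\nabla f(x_k)$, $b=v_{k+1}$), which is exactly the inner-product bound you derive inline via the add-and-subtract plus reverse-triangle/Cauchy--Schwarz argument. The only cosmetic difference is that the paper cites this inequality as a supporting lemma rather than proving it on the spot.
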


We now focus on the term $\E\|v_{k+1}-\nabla f(x_k)\|$ in the upper bound of 
Lemma~\ref{lem:descent_mom}.
The main challenge for the analysis is that the direction 
\begin{align*}
    v_{k+1} \gets \beta_k v_k + (1-\beta) \nabla F(z_k;\xi_{k-\delta_k})
\end{align*}
is established by the heavy-tailed noise stochastic gradient estimate at the stale point $z_k=x_{k-\delta_k}$.
We introduce the deviation
\begin{align*}
\mu_k:=v_{k+1}-\nabla f(x_k),     
\end{align*}
and the estimate error
\begin{align*}
\zeta_k := \nabla F(z_k;\xi_{k-\delta_k}) - \nabla f(z_k)    
\end{align*}
which is unbiased due to Assumption \ref{ass:pBCM}.

Based on above notations, we decompose $\mu_k$ as follows
\begin{align}\label{eq:mu_recursion_outline}
\begin{split}
\mu_k
=\beta_k\mu_{k-1}
+\underbrace{\beta_k(\nabla f(x_{k-1})-\nabla f(x_k))}_{\text{iterate drift}}+\underbrace{(1-\beta)(\nabla f(z_k)-\nabla f(x_k))}_{\text{staleness bias}}
+\underbrace{(1-\beta)\zeta_k}_{\text{gradient noise}}.
\end{split}
\end{align}
According to Assumption \ref{ass:smooth} and Lemma \ref{lem:staleness},
we bound the iterate drift and the staleness bias as 
\begin{equation}\label{eq:iterate-drift}
\beta_t\|\nabla f(x_{t-1})-\nabla f(x_t)\|
\le \beta L\eta
\end{equation}
and  
\begin{equation}\label{eq:staleness-bias}
(1-\beta)\|\nabla f(z_t)-\nabla f(x_t)\|
\le (1-\beta) LR\eta.
\end{equation}
By unrolling \eqref{eq:mu_recursion_outline}, the gradient noise leads to the term $\sum_{t=1}^k (1-\beta)\beta^{k-t}\zeta_t$.
According to the $p$-BCM condition in Assumption \ref{ass:pBCM}, it holds
\begin{equation}\label{eq:noise-term}
\E \left[\left\|\sum_{t=1}^k (1-\beta)\beta^{k-t}\zeta_t\right\|\right] 
\le 2^{\frac{1}{p}}(1-\beta)^{\frac{p-1}{p}}\sigma.
\end{equation}

Combining equations (\ref{eq:mu_recursion_outline})--(\ref{eq:noise-term}), 
we achieve the following upper bound on $\BE[\|\mu_k\|]=\E\big[\|v_{k+1}-\nabla f(x_k)\|\big]$.
\begin{lem}
\label{lem:mom_dev_tight}
Under Assumptions~\ref{ass:smooth}--\ref{ass:pBCM}, 
Algorithm \ref{alg:ringmaster_nsgd_mom_server}
holds 
\begin{align*}
\E\big\|v_{k+1}-\nabla f(x_k)\big\|
\le
\frac{L\eta}{1-\beta} + LR\eta + 2^{\frac{1}{p}}\sigma(1-\beta)^{\frac{p-1}{p}}
\end{align*}
for all $k\geq 1$.
\end{lem}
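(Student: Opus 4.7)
The plan is to unroll the recursion (\ref{eq:mu_recursion_outline}) for $\mu_k=v_{k+1}-\nabla f(x_k)$ into a closed-form weighted sum, and then apply the triangle inequality to separate a deterministic drift/bias contribution from a stochastic noise contribution. Because $\beta_1=0$, iterating (\ref{eq:mu_recursion_outline}) from $t=1$ up to $t=k$ kills the initial-condition term and gives
\[
\mu_k
=\sum_{t=1}^{k}\beta^{k-t}\bigl[\beta_t(\nabla f(x_{t-1})-\nabla f(x_t))+(1-\beta)(\nabla f(z_t)-\nabla f(x_t))\bigr]
+(1-\beta)\sum_{t=1}^{k}\beta^{k-t}\zeta_t.
\]

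For the drift and bias groups, (\ref{eq:iterate-drift}) bounds each iterate drift term by $\beta L\eta$, so its aggregated contribution is dominated by the geometric sum $\beta L\eta\sum_{t=1}^{k}\beta^{k-t}\le L\eta/(1-\beta)$. Similarly, (\ref{eq:staleness-bias}) bounds each staleness bias term by $(1-\beta)LR\eta$, and summing yields $LR\eta\sum_{t=1}^{k}(1-\beta)\beta^{k-t}\le LR\eta$. Both estimates follow immediately from Assumption~\ref{ass:smooth}, Lemma~\ref{lem:staleness}, and the fact that the normalization step in (\ref{eq:main-steps}) fixes the per-step distance at exactly $\eta$.

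The main technical step is the stochastic sum. Each $\xi_{t-\delta_t}$ is drawn fresh at its worker independently of the server's history up to the point that stochastic gradient is requested, so $\zeta_t$ has conditional mean zero given the $\sigma$-algebra of all prior events, and $\{\zeta_t\}$ forms a martingale-difference sequence in $\BR^d$. For $p\in(1,2]$ I would invoke a von Bahr--Esseen-type $p$th moment inequality for such sequences in a Hilbert space to obtain
\[
\E\left\|(1-\beta)\sum_{t=1}^{k}\beta^{k-t}\zeta_t\right\|^p
\le 2(1-\beta)^{p}\sum_{t=1}^{k}\beta^{p(k-t)}\,\E\|\zeta_t\|^p
\le \frac{2(1-\beta)^{p}\sigma^{p}}{1-\beta^{p}},
\]
where $\E\|\zeta_t\|^p\le\sigma^p$ comes from the $p$-BCM part of Assumption~\ref{ass:pBCM}. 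Jensen's inequality applied to the concave map $x\mapsto x^{1/p}$ then lifts this $L^p$ bound to the $L^1$ bound $2^{1/p}\sigma(1-\beta)/(1-\beta^{p})^{1/p}$. A final use of the elementary inequality $1-\beta^{p}\ge 1-\beta$, valid for $\beta\in[0,1)$ and $p\in(1,2]$, collapses the expression into $2^{1/p}\sigma(1-\beta)^{(p-1)/p}$, reproducing (\ref{eq:noise-term}). Adding the three contributions finishes the proof.

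The main obstacle I anticipate is the noise term: recovering the sharp constant $2^{1/p}$ requires a tight $p$th moment inequality with constant $2$, and one must carefully verify that the random indices $\delta_t$, which are themselves determined by the asynchronous scheduling and hence system state, do not compromise the martingale-difference structure needed to apply it. The drift and bias pieces are then routine once Lemma~\ref{lem:staleness} is combined with $L$-smoothness and the fact that $\sum_{t}\beta^{k-t}$ telescopes geometrically.
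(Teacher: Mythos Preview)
Your proposal is correct and follows essentially the same approach as the paper: unroll the recursion \eqref{eq:mu_recursion_outline} using $\beta_1=0$ to obtain $\mu_k=\sum_{t=1}^k\beta^{k-t}(A_t+B_t+C_t)$, bound the drift and staleness pieces via Assumption~\ref{ass:smooth} and Lemma~\ref{lem:staleness} exactly as you describe, and handle the noise sum with Jensen plus the von Bahr--Esseen-type inequality (the paper's Lemma~\ref{lem:vbe_mds}, due to \citet{hubler2025}) followed by $1-\beta^{p}\ge 1-\beta$. The filtration the paper uses to justify the martingale-difference property is $\mathcal{F}_t=\sigma(x_0,z_1,\dots,z_t,\zeta_1,\dots,\zeta_t)$, which resolves the scheduling concern you flagged.
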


Combining Lemma~\ref{lem:descent_mom} with Lemma~\ref{lem:mom_dev_tight}
and telescoping over $k=0,\ldots,K-1$ yields  
\begin{align*}
\begin{split}
\frac{1}{K}\!\sum_{k=0}^{K-1}\!\E\|\nabla f(x_k)\|
\le\frac{\Delta}{\eta K}
\!+\!2L\eta\left(\frac{1}{\alpha}\!+\!R\!+\!1\right)
\!+\!2^{\frac{p+1}{p}}\sigma\alpha^{\frac{p-1}{p}}\!.
\end{split}
\end{align*}
Following the parameter settings in Theorem~\ref{thm:main_mom_tight},
the above inequality implies
\begin{align*}
\E\|\nabla f(\hat x)\|
=\frac{1}{K}\sum_{k=0}^{K-1}\E\|\nabla f(x_k)\|\le \epsilon,
\end{align*}
which achieves the desired result.

\paragraph{Discussion}
The main technical novelty in our analysis for proposed RANSGD-m (Algorithm \ref{alg:ringmaster_nsgd_mom_server}) is the decomposition of $\mu_k$ shown in equation  (\ref{eq:mu_recursion_outline}), which differs from that used in the proofs of previous methods in the following respects.
\begin{itemize}[leftmargin=0.5cm,topsep=-0.15cm,itemsep=-0.1cm]
    \item Recall that the existing analysis for the synchronized normalized SGD with momentum on single machine decomposes $\mu_k$ as follows \citep{hubler2025}
    \begin{align}\label{eq:muk-single}
    \begin{split}
        \mu_k =\beta_k\mu_{k-1} + \beta_k(\nabla f(x_k)-\nabla f(x_{k-1}))+ (1-\beta_k)(\nabla F(x_k;\xi_{k}) - \nabla f(x_k)).
    \end{split}
     \end{align}
     In contrast, the analysis of our RANSGD-m requires to consider the staleness caused by the asynchronous update, so that we additionally introduces the following staleness bias term
     \begin{align*}
     (1-\beta)(\nabla f(z_k)-\nabla f(x_k))    
     \end{align*}
     in decomposition (\ref{eq:mu_recursion_outline}).
     Accordingly, the gradient noise term in our analysis is
     \begin{align*}
         \zeta_k = \nabla F(z_k;\xi_{k-\delta_k}) - \nabla f(z_k), 
     \end{align*}
     which is with respect to the stale point $z_k=x_{k-\delta_k}$, 
     rather than the current iterate $x_k$ used in the last term of decomposition (\ref{eq:muk-single}).
     \item The existing analysis  the delay threshold protocol in asynchronous  optimization considers the iteration
     \begin{align*}
        x_{k+1} \gets x_k - \eta \nabla F(z_{k};\xi_{k-\delta_k}),    
     \end{align*}
     which is a variant of standard SGD. 
     As a comparison, our iteration scheme (\ref{eq:main-steps}) includes the momentum and the normalization, which leads to the analysis be quite different and more complicated.
\end{itemize}

\section{The Lower Complexity Bounds}\label{sec:lower-bound}

This section provides lower bounds to show the proposed RANSGD-m (Algorithm \ref{alg:ringmaster_nsgd_mom_server}) is optimal for both fixed and universal computation models with heavy-tailed noise.

\subsection{The Fixed Computation Model}

We first focus on the fixed computation model corresponding to Assumption \ref{ass:time}, i.e., each worker $i$ has a fixed upper bound $\tau_i>0$ on the time complexity to access a stochastic gradient estimate. 
We consider the algorithm class by following the definition of \citet{tyurin2023optimal} 
but generalize the order of central moment for gradient noise from $p=2$ to $p\in(1,2]$.

\begin{dfn}
\label{dfn:time_multi_oracle}
An asynchronous stochastic first-order oracle algorithm $\fA=\{A_k\}_{k\ge 0}$ over $n$ workers under the fixed computation model satisfies the following constraints.
\begin{itemize}[leftmargin=0.3cm,topsep=-0.1cm,itemsep=-0.1cm]
\item Each worker $i\in[n]$ maintains an internal state
\[
s_i=(s_{t,i}, s_{x,i}, s_{q,i})\in \R_{\ge 0}\times \R^{d} \times \{0,1\},
\]
where $s_{t,i}\in\BR$ stores  start time of 
current computation,
$s_{x,i}\in\BR^{d}$ stores the corresponding requested point,
and $s_{q,i}$ stores the state of worker $i$, i.e., $s_{q,i}=0$ when worker $i$ is idle and $s_{q,i}=1$ when worker $i$ is busy.
\item Each worker $i\in[n]$ is associated with the time sensitive stochastic first-order oracle $O^{\widehat{\nabla} F}_{\tau_i}$, which takes input as the current time $t$, requested point $x$, and current state~$s_i$ and output  $(s_{i,+},g)=O^{\widehat{\nabla} F}_{\tau_i}(t,x,s_i)$ such that 
\begin{align*}
\begin{split}    
&~\bigl(s_{i,+}, g\bigr)=
O^{\widehat{\nabla} F}_{\tau_i}(t,x,s_i):= 
\begin{cases}
\bigl((t,x,1),0\bigr),
&s_{q,i}=0,\\[2pt]
\bigl((s_{t,i},s_{x,i},1),0\bigr),
&s_{q,i}=1 \ \text{and}\ t<s_{t,i}+\tau_i,\\[2pt]
\bigl((0,0,0),\widehat{\nabla} F(s_{x,i};\xi)\bigr),
&s_{q,i}=1 \ \text{and}\ t\ge s_{t,i}+\tau_i,
\end{cases}
\end{split}
\end{align*}
where $\xi$ is sampled from distribution~$\fD$ and $\widehat{\nabla} F(s_{x,i};\xi)$ 
can be accessed by the fixed computation model described by Assumption \ref{ass:time} and 
$\mathbb{E}[\widehat{\nabla}F(s_{x,i};\xi)]=\nabla f(s_{x,i})$ and 
$\mathbb{E}[\|\widehat{\nabla} F(s_{x,i}; \xi) - \nabla f(s_{x,i}) \|^p] \le \sigma^p$ for some $p\in(1,2]$ and $\sigma>0$. 
\item The asynchronous stochastic first-order
oracle algorithm $\fA=\{A_k\}_{k\ge 0}$ initializes $t_0=0$ and $x_0\in\R^{d}$, then performs oracles $\{O^{\widehat{\nabla} F}_{\tau_i}\}_{i=1}^n$ 
during the iterations. 
At round $k$, after observing gradient estimates $\{g_1,\dots,g_k\}$ from the outputs of the past oracle calls, 
the algorithm perform~$A_k$ to select a wall-clock time $t_{k+1}$ such that $t_{k+1}\ge t_k$ and worker $i_{k+1}$ to output
\begin{align*}    
(t_{k+1}, i_{k+1}, x_k)\ :=\ A_k(g_1,\dots,g_k),
\end{align*}
where $x_k\in\BR^d$ satisfies
$\mathrm{supp}(x_k) \ \subseteq \ \bigcup_{s \in [k]} \mathrm{supp}(g_s)$.
Here, we use the notation $\mathrm{supp}(\cdot)$ the present supporting set, i.e., for given vector $v=[v_{(1)},\dots,v_{(d)}]^\top\in\BR^d$, we denote $\mathrm{supp}(v):=\{j:\ v_{(j)}\neq 0\}$. 
\end{itemize}
\end{dfn}

For the algorithm class shown in Definition \ref{dfn:time_multi_oracle}, we establish the following lower bound on the time complexity for asynchronous stochastic nonconvex optimization under the fixed computation model with heavy-tailed noise.

\begin{thm}\label{thm:lb_asgd_pBCM}
For any $L>0$, $\Delta>0$, $p\in(1,2]$, $\sigma>0$, $\{\tau_i>0\}_{i=1}^n$, and $\epsilon\le c\sqrt{L\Delta}$ with some constant $c>0$, 
there exists an $L$-smooth function $f:\BR^d\to\BR$ associated with stochastic first-order oracles $\{O^{\widehat{\nabla} F}_{\tau_i}\}_{i=1}^n$ such that any asynchronous stochastic first-order oracle algorithm $\fA=\{A_k\}_{k\ge 0}$ follows Definition \ref{dfn:time_multi_oracle} with the initial point $x_0\in\BR^d$ such that $f(x_0)-\inf_{x\in\BR^d}f(x)\leq\Delta$ requires at least the time complexity of 
\begin{equation*}
\Omega\left(\min_{m\in[n]}
\left(\frac{1}{m}\sum_{i=1}^m \frac{1}{\tau_i}\right)^{-1}
\left(
\frac{L\Delta}{\epsilon^{2}}
+
\frac{L \sigma^{\frac{p}{p-1}} \Delta}{m\epsilon^{\frac{3p-2}{p-1}}}
\right)\right).
\end{equation*}
for finding an $\epsilon$-stationary point $\hat x\in\BR^d$ such that
\begin{align*}
\E\left[\Norm{\nabla f(\hat x)}\right] \le \epsilon.    
\end{align*}
\end{thm}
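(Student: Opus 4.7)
The plan is to prove the lower bound by combining two classical hardness constructions -- a deterministic nonconvex chain and a heavy-tailed stochastic lower bound -- and then converting query counts into wall-clock time through the asynchronous oracle model of Definition~\ref{dfn:time_multi_oracle}. Without loss of generality assume the workers are ordered so that $\tau_1 \le \tau_2 \le \cdots \le \tau_n$. The outer $\min_{m\in[n]}$ is handled by constructing a single hard instance and arguing that any algorithmic strategy effectively operates at some number $m\in[n]$ of productive workers; since $m$ is determined by the strategy, the uniform lower bound obtained by minimizing over $m$ is exactly the bound stated in the theorem.

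The hard instance would be built as a direct sum $f = f_1 \oplus f_2$ acting on orthogonal coordinate blocks, so that the two hardness sources contribute additively. The first block $f_1$ is a nonconvex zero-chain of the Carmon et al.\ / Arjevani et al.\ type, normalized so that finding an $\epsilon$-stationary point requires at least $K_{\det} = \Omega(L\Delta/\epsilon^2)$ ``frontier'' gradient queries in the admissible algorithm class of Definition~\ref{dfn:time_multi_oracle}; the range condition $\epsilon \le c\sqrt{L\Delta}$ is precisely what allows the chain to be long enough. The second block $f_2$ embeds the heavy-tailed noise construction of \citet{zhang2020adaptive} and \citet{liu2023stochastic}, which under the $p$-BCM condition of Assumption~\ref{ass:pBCM} forces an aggregate of $K_{\rm stoch} = \Omega\bigl(L\sigma^{p/(p-1)}\Delta/\epsilon^{(3p-2)/(p-1)}\bigr)$ queries for sufficient noise averaging.

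Converting query counts into wall-clock time then uses the time sensitivity of the oracles. For the chain component, any productive advance requires a worker to finish a gradient evaluated at the current frontier; together with the rotation-invariant zero-chain structure, this forces each chain step to consume on average $H(m) := \bigl(\frac{1}{m}\sum_{i=1}^m 1/\tau_i\bigr)^{-1}$ units of wall-clock time when $m$ workers are active, giving $T \ge \Omega\bigl(H(m)\,K_{\det}\bigr)$. For the noise component the algorithm is free to parallelize, so the total number of queries completed in time $T$ is at most $T\sum_{i=1}^m 1/\tau_i$ (up to boundary terms); forcing this count to exceed $K_{\rm stoch}$ yields $T \ge K_{\rm stoch}/\sum_{i=1}^m 1/\tau_i = H(m)\,K_{\rm stoch}/m$. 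Summing the two contributions reproduces the form $H(m)\bigl(L\Delta/\epsilon^2 + L\sigma^{p/(p-1)}\Delta/(m\epsilon^{(3p-2)/(p-1)})\bigr)$, and minimizing over $m$ completes the theorem.

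The main obstacle will be the rigorous harmonic-mean per-step argument for the chain component in the asynchronous oracle model: one must show that no adaptive strategy -- reordering, discarding, or speculating across workers -- can beat the $H(m)$ rate once the zero-chain direction is hidden by a random rotation and the gradient noise is heavy-tailed. This requires extending the rotated zero-chain argument of Arjevani et al.\ to the time-sensitive multi-worker oracle of Definition~\ref{dfn:time_multi_oracle}, and carefully checking that the direct-sum instance still satisfies the smoothness, lower-bound, and $p$-BCM constraints with the stated parameters $L,\Delta,\sigma,p$. A secondary subtlety is verifying that the deterministic and stochastic blocks do not interfere under the common oracle, so that their hardnesses add cleanly under the chosen normalization.
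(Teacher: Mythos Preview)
Your direct-sum decomposition $f = f_1 \oplus f_2$ does not yield the bound stated in the theorem, and the gap is in the per-chain-step time claim. For the deterministic zero-chain block $f_1$, each step of progress requires \emph{one} gradient evaluated at the current frontier; with several workers querying the same frontier, the fastest one (cost $\tau_1$) advances the chain, so the time per step is $\tau_1$, not the harmonic mean $H(m)$. Consequently the chain contributes $\tau_1 K_{\det}$ regardless of how many workers are ``active,'' while the separate stochastic block contributes $K_{\rm stoch}/\sum_{i=1}^{n}\tau_i^{-1}$. Adding these gives $\tau_1 K_{\det} + K_{\rm stoch}/S_n$, which is strictly weaker than the theorem's $\min_{m}\bigl[H(m)K_{\det} + H(m)K_{\rm stoch}/m\bigr]$: for instance with $\tau_1=1$, $\tau_2=2$ and $K_{\det}=K_{\rm stoch}=K$, the theorem claims $2K$ while your additive bound gives only $5K/3$. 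The ``$m$ is determined by the strategy, then minimize over $m$'' device cannot repair this, because once the two hardness sources live on independent blocks, the algorithm is free to attack them with \emph{different} effective worker counts (one fast worker on the chain, all workers on the noise block).

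What the paper does instead is to couple the two hardness sources in a single instance. It takes the scaled chain $f(x)=\tfrac{L\lambda^2}{152}H_d(x/\lambda)$ and equips it with a Bernoulli-gated oracle: the gradient on the next-to-be-revealed coordinate is returned only with probability $q$, where $q = \min\{(c\,\epsilon/\sigma)^{p/(p-1)},1\}$ is calibrated so that the resulting stochastic oracle satisfies the $p$-BCM bound $\mathbb{E}\|\widehat{\nabla}F-\nabla f\|^p\le\sigma^p$. Now advancing the chain one step requires on the order of $1/q$ oracle calls \emph{at the current frontier}, and these calls \emph{can} be parallelized across $m$ workers while the chain steps themselves cannot. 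Tyurin's asynchronous progress lemma (the paper's Lemma~\ref{lem:async_progress}) then gives directly, for any $m$, a time lower bound proportional to $\bigl(\sum_{i\le m}\tau_i^{-1}\bigr)^{-1}(1/q+m)\cdot d$, and substituting $d\asymp L\Delta/\epsilon^2$ and $1/q\asymp(\sigma/\epsilon)^{p/(p-1)}$ produces the stated form. The point is that the $1/q$ and the $d$ must appear as a \emph{product} inside the $\min_m$, not as separate additive terms, and this is exactly what the gated-chain construction achieves and what your direct sum cannot.
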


The lower bound shown in our Theorem \ref{thm:lb_asgd_pBCM} matches the upper bound provided in Corollary \ref{cor:time_mom_refined}, which indicates the proposed RANSGD-m (Algorithm \ref{alg:ringmaster_nsgd_mom_server}) is optimal to the computation model associated with the oracle satisfying the  $p$-BCM assumption.
In the case of $p=2$, our lower bound reduces to the tight one under the bounded variance setting \citep{tyurin2023optimal}.

\subsection{The Universal Computation Model}

For the universal computation model (Assumption \ref{ass:universalmodel}), the time complexity for achieving a stochastic gradient estimate on worker $i\in[n]$ is not fixed but depends on the computation power $\upsilon_i:\BR_+\to\BR_+$.

Hence, we modify the algorithm class in Definition~\ref{dfn:time_multi_oracle} by replacing the oracles $O^{\widehat{\nabla} F}_{\tau_i}(t,x,s_i)$ with
\begin{align*}
\begin{split}
&~~O^{\widehat{\nabla} F}_{V_i}(t,x,s_i):=\!\begin{cases}
\bigl((t,x,1),0\bigr),
&\!\!s_{q,i}=0,\\[2pt]
\bigl((s_{t,i},s_{x,i},1),0\bigr),
&\!\!s_{q,i}\!=\!1~\text{and}~V_i(t)\!-\!V_i(s_{t,i})\!<\!1,\\[2pt]
\bigl((0,0,0),\widehat{\nabla} F(s_{x,i};\xi)\bigr),
&\!\!s_{q,i}\!=\!1~\text{and}~V_i(t)\!-\!V_i(s_{t,i})\!\ge\!1,
\end{cases}
\end{split}
\end{align*}
where $\widehat{\nabla} F(s_{x,i};\xi)$ satisfies 
$\mathbb{E} [ \widehat{\nabla} F(s_{x,i}; \xi)] = \nabla f(s_{x,i})$ and 
$\mathbb{E}\left[\|\widehat{\nabla} F(s_{x,i}; \xi) - \nabla f(s_{x,i}) \|^p\right] \le \sigma^p$ 
for all some $\sigma>0$ and $p\in(1,2]$ and  
$V_i(t):=\int_{0}^{t} \upsilon_i(\tau)\,{\rm d}\tau$ is the cumulative computation work function of worker $i$.
We present details for the formal definition of such algorithm class considered in the universal computation model in Definition \ref{dfn:time_multi_oracle_universal} of Appendix~\ref{sec:lb_asgd_heavytail_universal}.

For the above algorithm class associated with the modified oracle $O^{\widehat{\nabla} F}_{V_i}(t,x,s_i)$,
we establish the following lower bound on the time complexity for asynchronous stochastic nonconvex optimization under the universal computation model with heavy-tailed noise.

\newpage 

\begin{table*}[t]
\centering
\caption{We compare the time complexity of proposed RANSGDm with existing asynchronous stochastic first-order algorithms for finding an $\epsilon$-stationary point.
Note that only our results provide the theoretical guarantees for the general $p\in(1,2]$.} \vskip0.2cm
\label{table:compare}
\begin{tabular}{cccc}
\hline
Methods & Time Complexity for Fixed Computation Model & Noise &  Adaptive to UCM \\ \hline\addlinespace
\makecell{ASGD \\[0.1cm] \citep{mishchenko2022asynchronous}} &  $\displaystyle{\fO\left(\left(\frac{1}{n}\sum_{i=1}^n \frac{1}{\tau_i}\right)^{-1} \left(\frac{L\Delta}{\epsilon^{2}} + \frac{L\sigma^{2}\Delta}{n\epsilon^{4}}\right)\right)}$ & $p=2$ & \XSolidBrush \\\addlinespace  
\makecell{Rennala ASGD \\[0.1cm] \citep{tyurin2023optimal}} &  $\displaystyle{\fO\left(\min_{m\in[n]}\left(\frac{1}{m}\sum_{i=1}^m \frac{1}{\tau_i}\right)^{-1} \left(\frac{L\Delta}{\epsilon^{2}} + \frac{L\sigma^{2}\Delta}{m\epsilon^{4}}\right)\right)}$ & $p=2$ & \Checkmark \\ \addlinespace 
\makecell{Ringmaster ASGD \\[0.1cm] \citep{pmlr-v267-maranjyan25b}} &  $\displaystyle{\fO\left(\min_{m\in[n]}\left(\frac{1}{m}\sum_{i=1}^m \frac{1}{\tau_i}\right)^{-1} \left(\frac{L\Delta}{\epsilon^{2}} + \frac{L\sigma^{2}\Delta}{m\epsilon^{4}}\right)\right)}$ & $p=2$ & \Checkmark \\ \addlinespace 
\makecell{RANSGDm \\[0.1cm] (Corollary \ref{cor:time_mom_refined})} &  $\displaystyle{\fO\left(\min_{m\in[n]}\left(\frac{1}{m}\sum_{i=1}^m \frac{1}{\tau_i}\right)^{-1} \left(\frac{L\Delta}{\epsilon^{2}} + \frac{L\sigma^{\frac{p}{p-1}}\Delta}{m\epsilon^{\frac{3p-2}{p-1}}}\right)\right)}$ & $p\in(1,2]$ & \Checkmark \\ \addlinespace \hline\addlinespace
\makecell{Lower Bound \\[0.1cm] \citep{tyurin2023optimal}} &  $\displaystyle{\Omega\left(\min_{m\in[n]}\left(\frac{1}{m}\sum_{i=1}^m \frac{1}{\tau_i}\right)^{-1} \left(\frac{L\Delta}{\epsilon^{2}} + \frac{L\sigma^{2}\Delta}{m\epsilon^{4}}\right)\right)}$ & $p=2$ & -- \\ \addlinespace \addlinespace
\makecell{Lower Bound \\[0.1cm] (Theorem \ref{thm:lb_asgd_pBCM})} &  $\displaystyle{\Omega\left(\min_{m\in[n]}\left(\frac{1}{m}\sum_{i=1}^m \frac{1}{\tau_i}\right)^{-1} \left(\frac{L\Delta}{\epsilon^{2}} + \frac{L\sigma^{\frac{p}{p-1}}\Delta}{m\epsilon^{\frac{3p-2}{p-1}}}\right)\right)}$ & $p\in(1,2]$ & -- \\ \addlinespace \hline
\end{tabular}
\end{table*}

\begin{thm}
\label{thm:lb_asgd_pBCM_universal}
For any $L>0$, $\Delta>0$, $p\in(1,2]$, $\sigma>0$, $\{v_i>0\}_{i=1}^n$, and $\epsilon\le c\sqrt{L\Delta}$ with some constant $c>0$, 
there exists an $L$-smooth function $f:\BR^d\to\BR$ associated with stochastic stochastic first-order oracles $\{O^{\widehat{\nabla} F}_{V_i}\}_{i=1}^n$ such that any asynchronous stochastic first-order oracle algorithm $\fA=\{A_k\}_{k\ge 0}$ follows the universal computation model (see details in Definition \ref{dfn:time_multi_oracle_universal}) with the initial point $x_0\in\BR^d$ such that $f(x_0)-\inf_{x\in\BR^d}f(x)\leq\Delta$ requires at least the time complexity of $T_{\tilde K}$ for finding an $\epsilon$-stationary point $\hat x\in\BR^d$ such that
\begin{align*}
\E\left[\Norm{\nabla f(\hat x)}\right] \le \epsilon.    
\end{align*}
Here we define $\tilde K$ as
\begin{align*}
    \tilde K &:= \left\lfloor \frac{1}{2}\left\lfloor\frac{L\Delta}{29184\epsilon^{2}}\right\rfloor+\log\left(\frac{1}{2}\right)\right\rfloor = \Omega\left( \frac{L\Delta}{\epsilon^2} \right),
\end{align*}
and $T_{\tilde K}$ is the $\tilde K$th element of the following recursively defined sequence $\{T_K\}_{K\ge 0}$ such that $T_0= 0$ and
\begin{align*}
    T_K &:= \min\left\{ T \ge 0 : \sum_{i=1}^{n} \int_{T_{K-1}}^{T} \upsilon_i(\tau) \,{\rm d}\tau \ge \left\lceil\frac{1}{4q}\right\rceil \right\}
\end{align*}
for all $K\geq 1$, where
$q=\min\big\{\big({92\cdot 2^{\frac1p}\epsilon}/\sigma\big)^{\frac{p}{p-1}},1\big\}$ which leads to $\left\lceil1/(4q)\right\rceil=\Theta\big(\lceil\sigma/\epsilon\rceil^{\frac{p}{p-1}}\big)$.
\end{thm}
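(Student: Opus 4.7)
The plan is to combine the hard instance construction underlying Theorem \ref{thm:lb_asgd_pBCM} with the wall-clock accounting for the universal computation model developed by \citet{tyurin2025tight} and adapted in \citep{pmlr-v267-maranjyan25b}. The proof has two conceptually independent ingredients: (i) an \emph{information-theoretic} lower bound showing that any algorithm must traverse $\tilde K=\Omega(L\Delta/\epsilon^2)$ sequential ``progress stages,'' and within each stage it must collect at least $\lceil 1/(4q)\rceil$ completed oracle evaluations at one pivotal point to beat the $p$-BCM noise; and (ii) a \emph{timing} argument that converts the per-stage sample count into the recursive time bound $T_K$ via the cumulative work functions $V_i$.

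First, I would reuse essentially the same construction as in Theorem \ref{thm:lb_asgd_pBCM}: an $L$-smooth zero-chain function $f:\BR^d\to\BR$ of Nesterov/Carmon--Hinder--Jin--Sidford type, combined with a stochastic oracle designed so that the noise on the ``newly activated'' coordinate at each stage is of magnitude calibrated to $(\sigma,p)$. Two standard pieces must be imported:
\begin{itemize}[leftmargin=0.5cm,topsep=-0.15cm,itemsep=-0.1cm]
\item A chain-length lower bound showing that no algorithm whose iterates satisfy $\mathrm{supp}(x_k)\subseteq\bigcup_{s\le k}\mathrm{supp}(g_s)$ can reduce $\|\nabla f(\hat x)\|$ below $\epsilon$ without ``revealing'' $\Omega(L\Delta/\epsilon^2)$ consecutive coordinates; this produces the outer $\tilde K$ factor, with the $\tfrac12\lfloor\cdot\rfloor+\log(1/2)$ adjustment arising from a Markov/random-stopping step that converts expected progress into a constant-probability guarantee.
\item A single-point $p$-BCM lower bound showing that identifying the direction along the next active coordinate with constant probability requires $\Omega((\sigma/\epsilon)^{p/(p-1)})$ independent samples at that point; this matches the choice $q=\min\{(92\cdot 2^{1/p}\epsilon/\sigma)^{p/(p-1)},1\}$ and yields the per-stage budget $\lceil 1/(4q)\rceil$.
\end{itemize}

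Next, for the timing part, I would formalize a stage-by-stage simulation of the algorithm against the constructed oracle. Let $T_K$ denote the earliest time at which the algorithm could have collected $\lceil 1/(4q)\rceil$ completed gradient evaluations at the pivotal point of stage $K$. Because the universal oracle $O^{\widehat{\nabla} F}_{V_i}$ releases a gradient only once the worker has accumulated at least one unit of cumulative work since its last reset, the total number of completed evaluations across workers during any interval $[T_{K-1},T]$ is at most $\sum_{i=1}^n\lfloor V_i(T)-V_i(T_{K-1})\rfloor$. Combining this with an adversary that refuses to reveal stage $K{+}1$ until stage $K$ is resolved (because queries at other points carry no information about the coordinate activated at stage $K$) gives exactly the recursion
\begin{align*}
T_K=\min\Big\{T\ge 0:\ \sum_{i=1}^n\int_{T_{K-1}}^{T}\upsilon_i(\tau)\,{\rm d}\tau\ge \big\lceil 1/(4q)\big\rceil\Big\},
\end{align*}
and iterating $\tilde K$ times yields the claimed lower bound $T_{\tilde K}$.

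The main obstacle will be the \emph{independence-of-stages} step underlying the timing reduction: one must show that queries issued \emph{before} stage $K$ opens cannot be ``pre-paid'' toward its $\lceil 1/(4q)\rceil$ sample budget. In the synchronous fixed-$\tau_i$ setting this is a standard consequence of the zero-chain structure, but under the universal computation model the algorithm is free to dispatch queries at arbitrary wall-clock times across workers with arbitrary time-varying $\upsilon_i$, so the adversarial argument has to be rephrased in terms of the oracle's internal state $(s_{t,i},s_{x,i},s_{q,i})$ and the release condition $V_i(t)-V_i(s_{t,i})\ge 1$. Once this is established, combining with the single-coordinate $p$-BCM hardness and the chain-length argument gives the result, and a final bookkeeping pass verifies the explicit constants ($29184$, $92\cdot 2^{1/p}$, and the additive $\log(1/2)$ coming from the random-stopping analysis).
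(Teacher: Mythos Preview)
Your proposal is broadly on the right track and mirrors the paper's overall structure---reuse the scaled zero-chain instance from Theorem~\ref{thm:lb_asgd_pBCM} and pair it with a universal-model wall-clock accounting---but two of your key framings do not match what the paper actually does, and one of them would not directly yield the stated constants.

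First, the per-stage sample requirement is \emph{not} obtained from a ``single-point $p$-BCM lower bound about identifying the direction along the next active coordinate.'' The paper constructs a \emph{specific} Bernoulli-gated oracle: on the newly admissible coordinate the true gradient is multiplied by $\xi/q$ with $\xi\sim\mathrm{Bernoulli}(q)$, and $q$ is chosen so that this oracle meets the $p$-BCM bound with level~$\sigma$. Revealing coordinate~$r$ is then a binary event, and the number of completed calls until reveal is $\vartheta_r\sim\mathrm{Geometric}(q)$. There is no estimation or hypothesis-testing step. The constants $\lceil 1/(4q)\rceil$, the factor $1/2$ on $d$, and the additive $\log(1/2)$ all come from a concentration inequality for sums of indicators of geometric random variables (Lemma~G.2 of \citet{tyurin2025tight}), which guarantees that with probability at least $1-\delta$ at least $\lfloor d/2+\log\delta\rfloor$ of the $\vartheta_r$ exceed $1/(4q)$; this is neither a Markov step nor a random-stopping argument.

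Second, the ``independence-of-stages / no pre-paying'' issue you flag as the main obstacle is essentially free once the above is in place. The zero-chain property $\operatorname{prog}(\nabla f(x))\le\operatorname{prog}(x)+1$ already forces sequential reveals, and the paper simply upper-bounds the number of oracle completions in $[t_{r-1},t]$ by $\sum_i\lfloor V_i(t)-V_i(t_{r-1})\rfloor$, then runs a short induction using monotonicity of each $V_i$ to show $t_{j_k}\ge T_k$ for the subsequence of large $\vartheta_{j_k}$. The technical weight sits in the geometric-concentration step and this induction (packaged as Lemma~\ref{lem:async_progress_universal}), not in ruling out pre-paid queries.
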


The order of $\tilde K$ and $T_K$ the Theorem \ref{thm:lb_asgd_pBCM_universal} align with the counterparts of $\bar K$ and $T_K$ in Corollary \ref{lem:universal_time_recursion_blocks}.
Therefore, the proposed RANSGD-m (Algorithm \ref{alg:ringmaster_nsgd_mom_server}) is also optimal to the universal computation model.
In the case of $p=2$, the lower bound match the results of the bounded variance setting \citep{tyurin2023optimal}.
We compare our results with related work in Table \ref{table:compare}.

\begin{figure*}[ht]
    \centering
    \begin{tabular}{ccc}
      \includegraphics[scale=0.3]{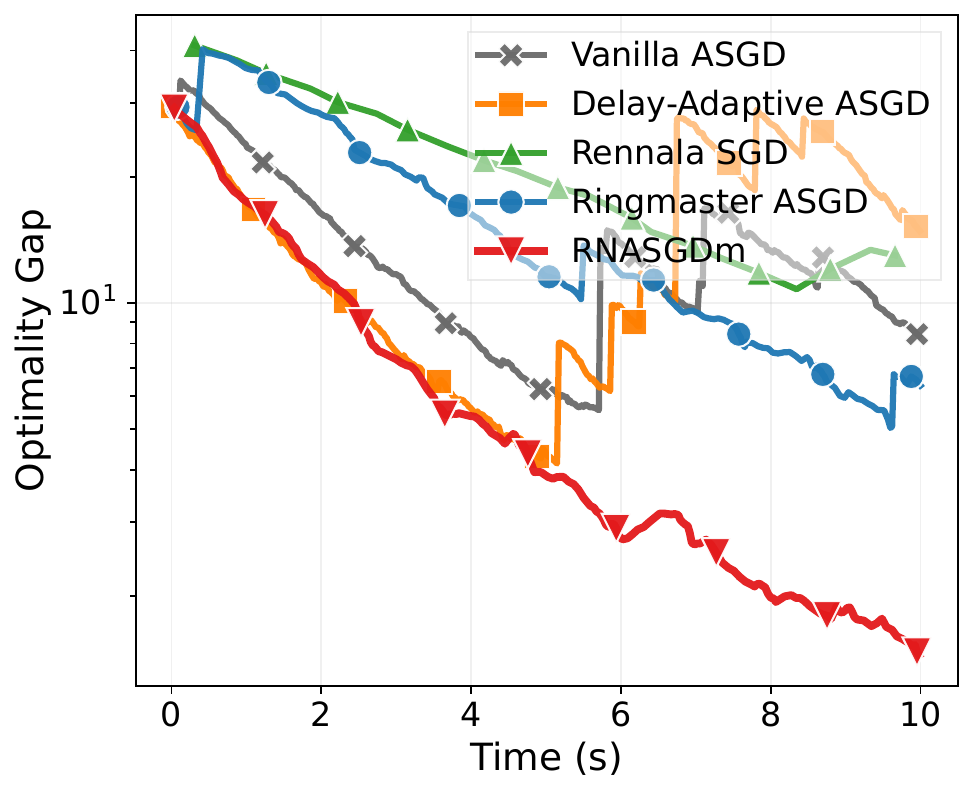}  &  
      \includegraphics[scale=0.3]{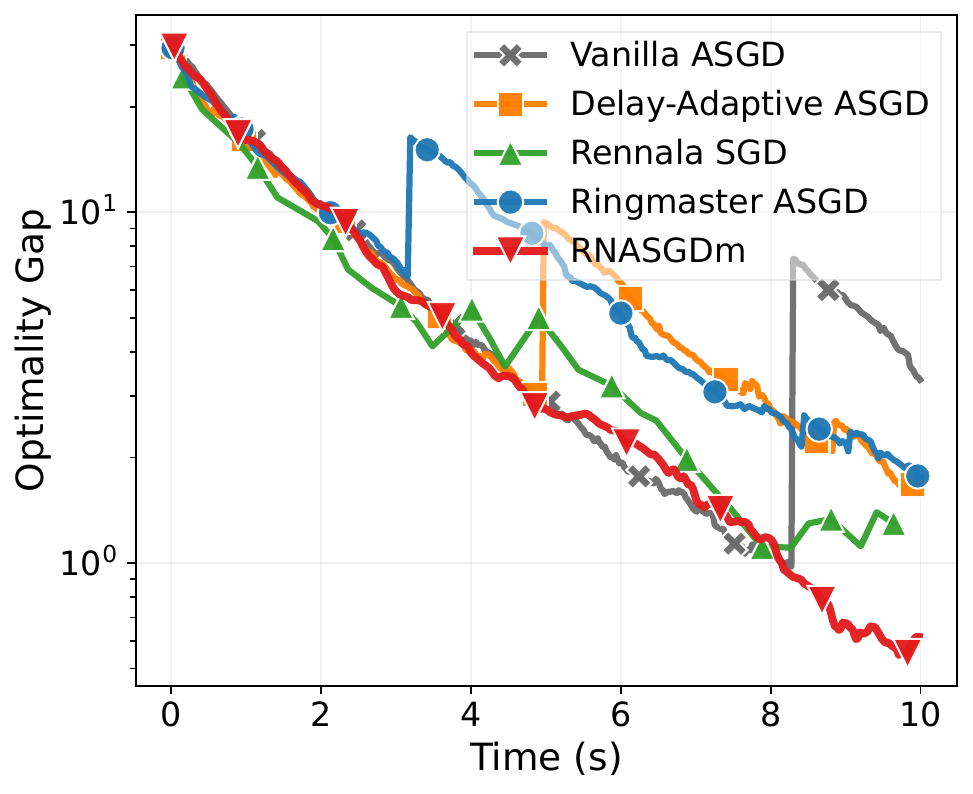}  &  
      \includegraphics[scale=0.3]{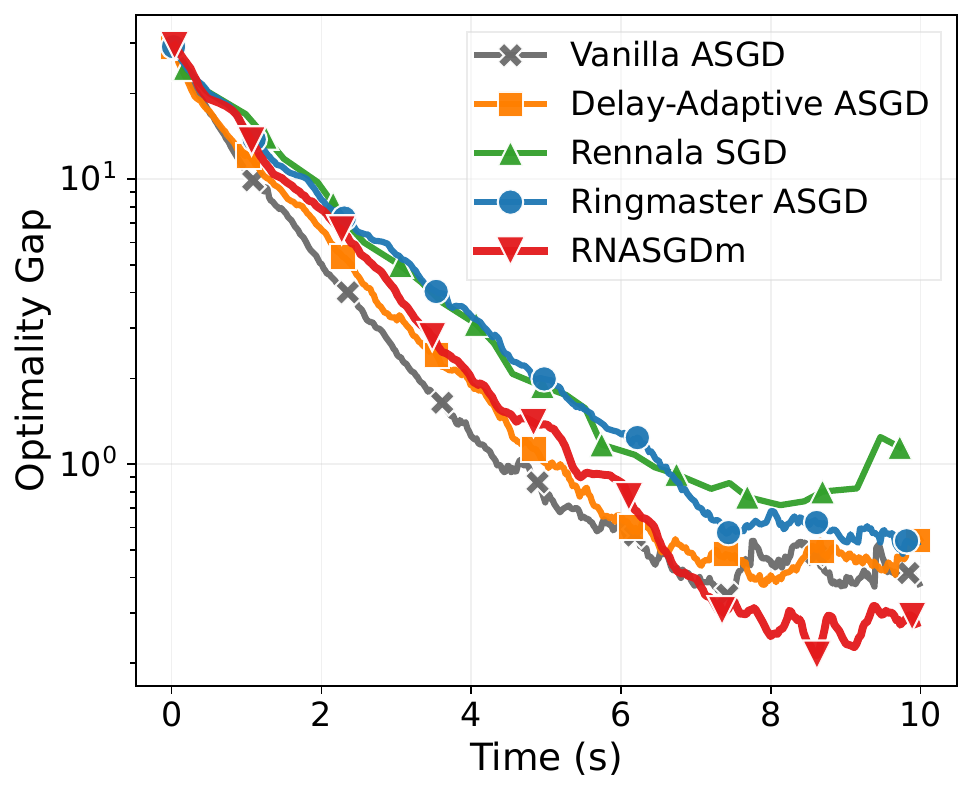}  \\        
      (a) $\nu=1.5$ & (b) $\nu=1.8$ & (c) $\nu=2.1$
    \end{tabular}  \vskip-0.2cm
    \caption{The results of solving the quadratic problem, where the  expected delay of slow workers is 0.02s.}
    \label{fig:quad_slow002}
\end{figure*} 

\begin{figure*}[ht]
    \centering
    \begin{tabular}{ccc}
      \includegraphics[scale=0.3]{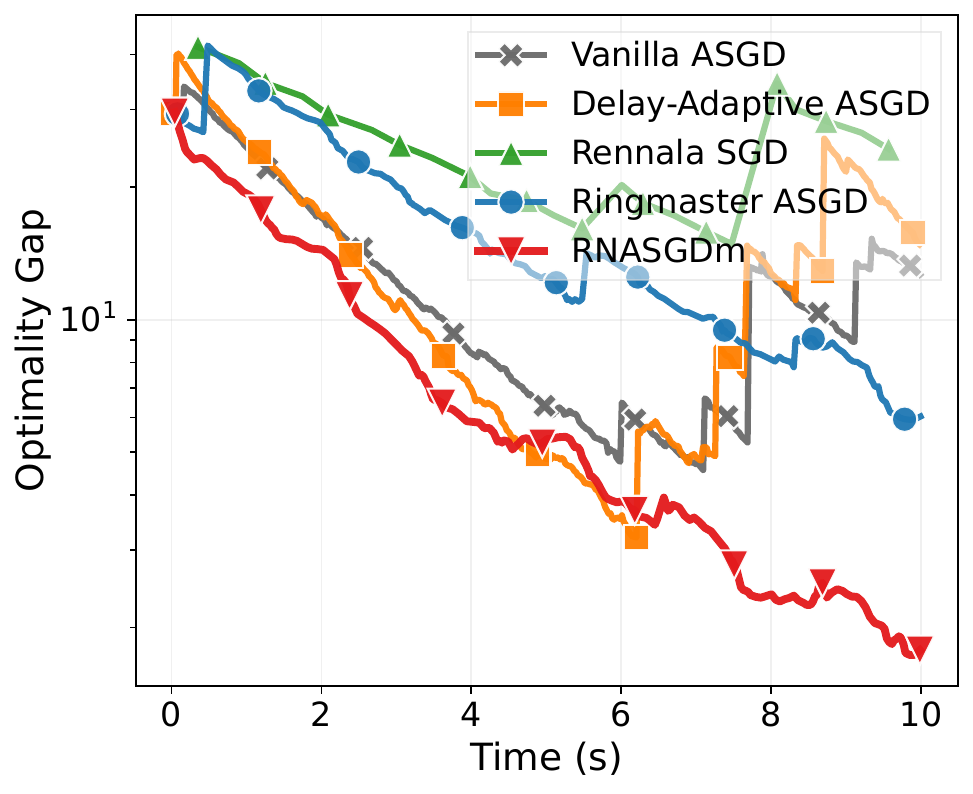}  &  
      \includegraphics[scale=0.3]{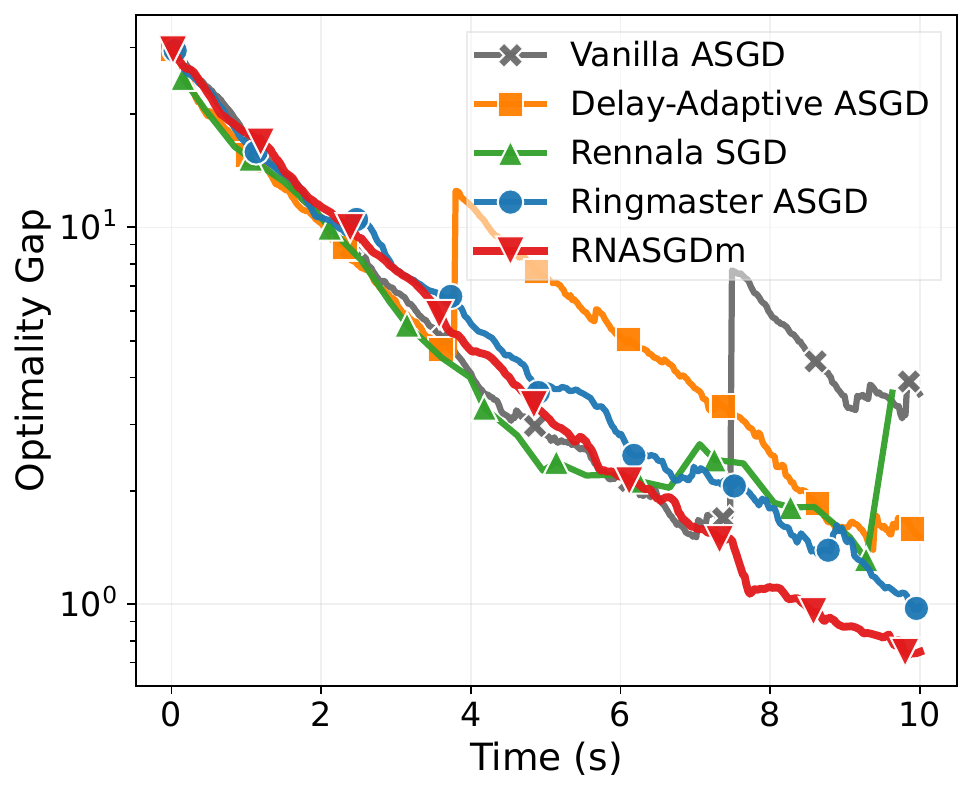}  &  
      \includegraphics[scale=0.3]{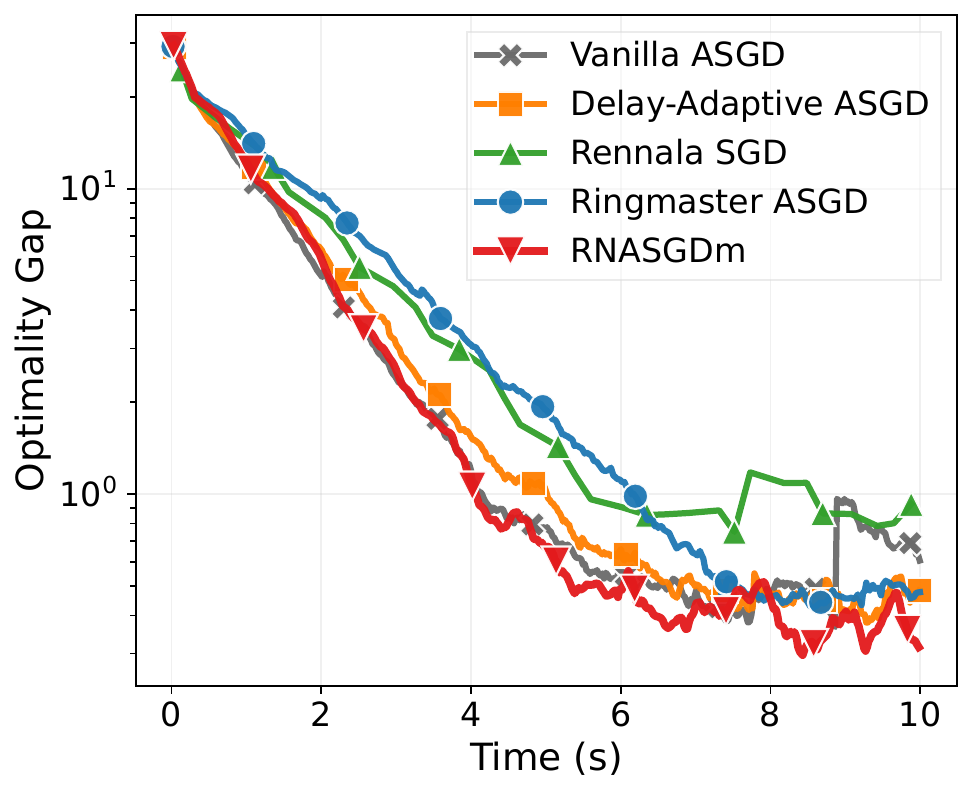}  \\      
      (a) $\nu=1.5$ & (b) $\nu=1.8$ & (c) $\nu=2.1$
    \end{tabular}  \vskip-0.2cm
    \caption{The results of solving the quadratic problem, where the  expected delay of slow workers is 0.005s.}
    \label{fig:quad_slow0005} 
\end{figure*}

\begin{figure*}[ht]
    \centering
    \begin{tabular}{ccc}
      \includegraphics[scale=0.3]{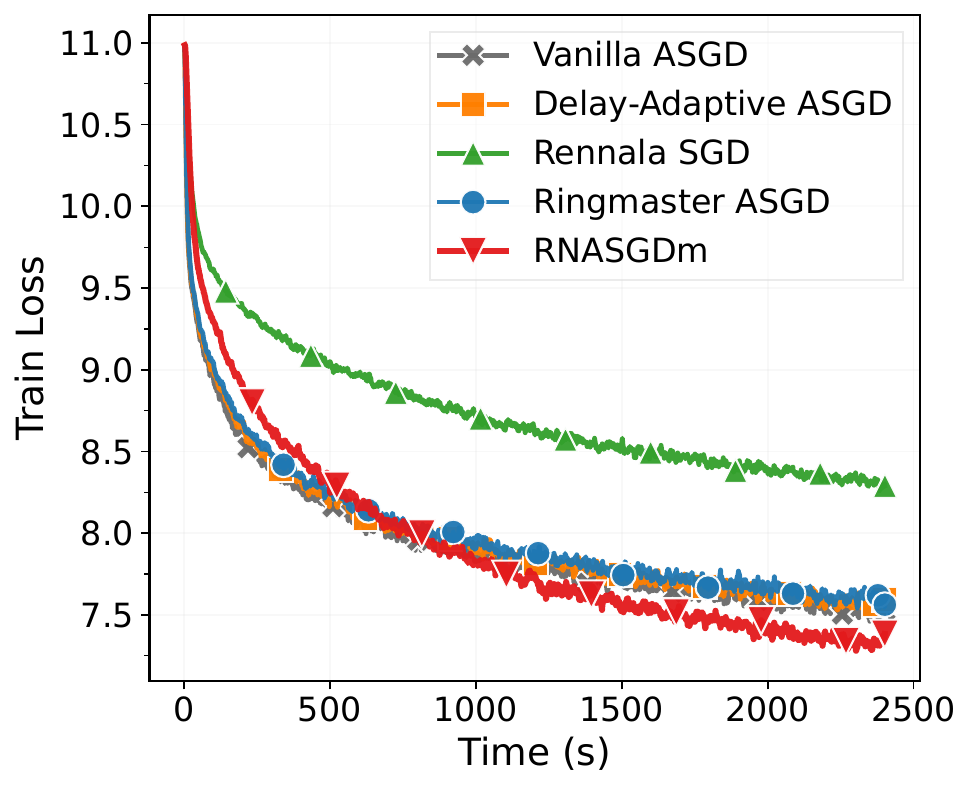}  &  
      \includegraphics[scale=0.3]{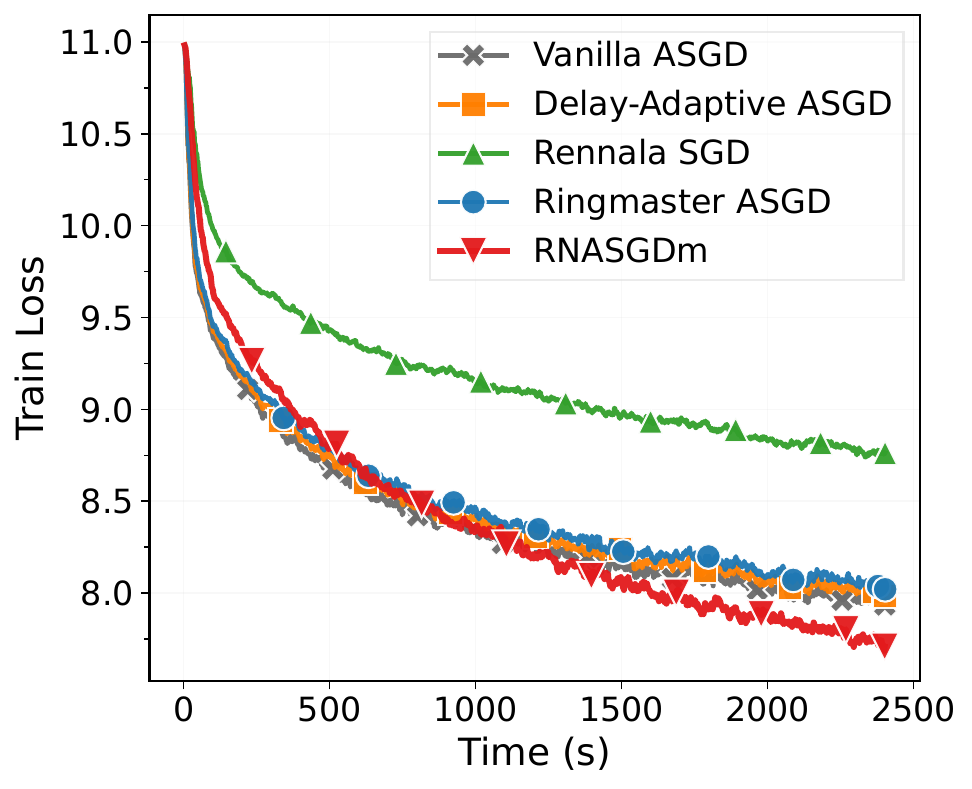}  &  
      \includegraphics[scale=0.3]{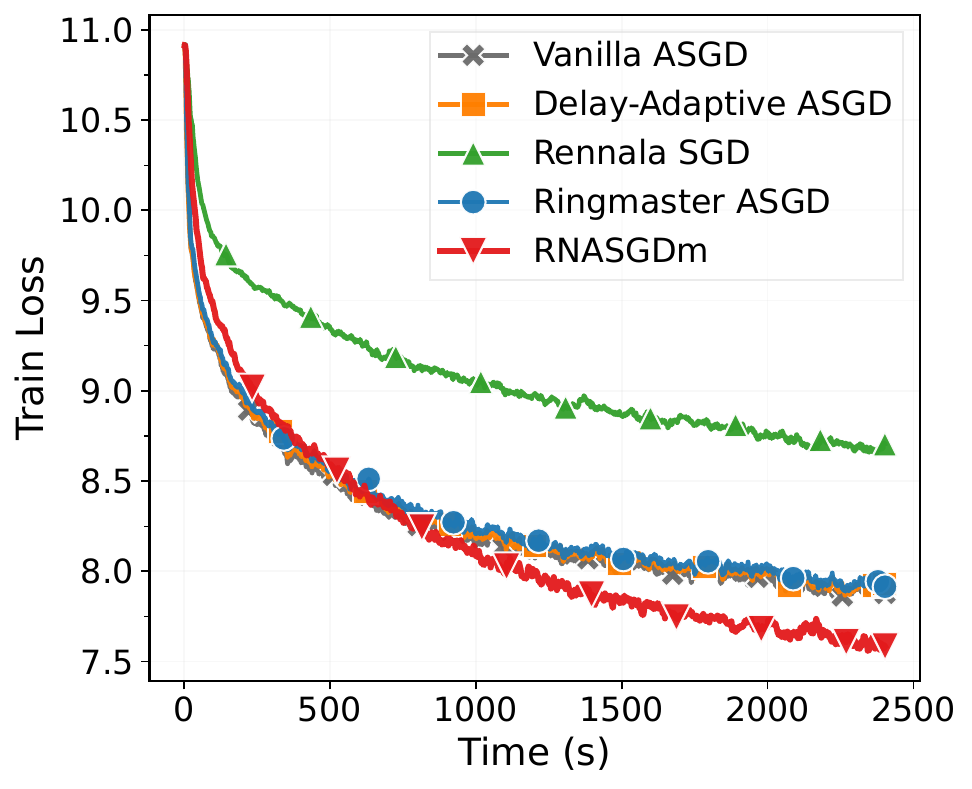}  \\    
      \footnotesize (a) exponential distributed delays (I) & 
      \footnotesize (b) exponential distributed delays (II) & 
      \footnotesize (c) Pareto distributed delays
    \end{tabular}
    \caption{The results of training GPT-2 model with different delay settings. The sub-figures (a) and (b) set exponential distributed delays with expectations $\{0,0.5,0.9,1.2,1.5,2.0,2.5\}$ and $\{1,2,3,4,5,6,7\}$, respectively. The sub-figure (c) set Pareto distributed delays with expectations $\{1,2,3,4,5,6,7\}$.}
    \label{figure:GPT-1} \vskip-0.4cm
\end{figure*}

\section{Experiments}
\label{sec:experiments}

We compare our proposed RANSGDm (Algorithm \ref{alg:ringmaster_nsgd_mom_server}) with baseline methods including Vanilla ASGD, Delay-Adaptive ASGD \citep{mishchenko2022asynchronous}, Rennala SGD \citep{tyurin2023optimal}, and Ringmaster ASGD \citep{pmlr-v267-maranjyan25b} on solving the synthetic quadratic problem 
and the task of training GPT-2 model.
We implement all algorithms by the framework of Ray \citep{moritz2018ray}.

\subsection{The Quadratic Problem}
\label{subsec:exp_quadratic}

We consider the optimization problem of minimizing the quadratic function 
$f(x)=\frac{1}{2}x^\top A x-b^\top x$    
on the distributed system consisting of one central server and $n=40$ workers,
where $A=\frac{1}{20000}X^\top X+0.01I$ with  $X\in\mathbb{R}^{20000\times 50}$ such that its entries are independently sampled from the standard Gaussian distribution and we set $b=Ax^\star\in\BR^{50}$ by taking~$x^\star\sim\mathcal{N}_{50}(0,I)$.

We set the stochastic gradient as $\nabla F(x;\xi)=\nabla f(x)+\zeta$ for all algorithm in experiments, where the noise vector~$\zeta$ has independent coordinates drawn from a Student-$t$ distribution with the degrees of freedom $\nu$.
Note that smaller $\nu$ leads to the more heavy-tailed noise.
We let $\nu\in\{1.5,1.8,2.1\}$ for our experiments.
We simulate the heterogeneous computation delay by following the setting of \citet{mishchenko2022asynchronous}, i.e., 
all 40 workers are split into 20 ``fast workers'' and 20 ``slow workers''.
For ``faster workers'', we set the delay to follow an exponential distribution with expectation of $0.001$s.
For ``slower workers'', we set the delay to follow an exponential distribution with expectation of $0.02$s and $0.005$s in two different settings.
For all algorithms, we tune the stepsize from $\{10^{-4},2\times10^{-4},\cdots,9\times10^{-2}\}$.
For Rennala SGD, we set the collection batch size to 6.
For Ringmaster ASGD and RANSGDm, we tune $R$ from $\{6,8,\dots,20\}$.
We fix momentum parameter $\beta=0.9$ for our RANSGDm.

We report results of optimality gap versus wall-clock time in 
Figures \ref{fig:quad_slow002} and \ref{fig:quad_slow0005} with two different delay settings for slower workers.
We observe our RANSGDm consistently achieves the best convergence behavior.
Moreover, advantage of RANSGDm is more clear under the heavy-tailed noise setting, i.e., $\nu=1.5$ and $\nu=1.8$, while the performance of all algorithms is close when the noise is not heavy-tailed, i.e., $\nu=2.1$.

\subsection{Training GPT-2}
\label{subsec:exp_gpt2}
We conduct the experiments on training GPT-2 model (124M parameters) on OpenWebText dataset \cite{Gokaslan2019OpenWeb}. 
We implement all algorithms on the systems of 8 RTX4090 GPUs, i.e., one server and $n=7$ workers. 
We use mini-batch size $12$ and context length $1024$ for all algorithms. 
We consider three different delay settings on workers, i.e., the exponential distributions with expectations $\{0,0.5,0.9,1.2,1.5,2.0,2.5\}$ seconds,
the exponential distributions with expectations $\{1,2,3,4,5,6,7\}$ seconds, 
and the Pareto distributions with expectations $\{1,2,3,4,5,6,7\}$ seconds.
For all algorithms, we tune stepsizes from $\{10^{-5}, 2\times10^{-5}$, 
$\cdots,5\times10^{-4}\}$.
For Rennala SGD, we set the collection batch size to 4.
For Ringmaster ASGD and RANSGDm, we set $R=8$.
We fix momentum parameter $\beta=0.9$ for our RANSGDm.

We report the results of training loss versus wall-clock time in Figure \ref{figure:GPT-1}.
We observe our RANSGDm consistently has better performance than baseline methods, which supports our theoretical analysis that our methods can well address the widely observed heavy-tailed noise in the language model training.

\section{Conclusion}

In this paper, we study asynchronous stochastic nonconvex optimization with arbitrarily heterogeneous computation times across workers.
We focus on the setting of heavy-tailed gradient noise under the $p$-BCM assumption and propose an asynchronous normalized
stochastic gradient descent with momentum and delay threshold.
Our analysis shows that the proposed method has the optimal time complexity for both the fixed computation model and the universal computational model. We also provide empirical studies on synthetic problem and training the language model to validate the advantage of our method.

In future work, we are interested in extending our ideas to design  asynchronous stochastic algorithms for solving the nonconvex nonsmooth optimization problem. 
We also would like to establish asynchronous stochastic algorithms for decentralized optimization with heavy-tailed noise. 

\bibliographystyle{plainnat}
\bibliography{reference}

\begin{thebibliography}{50}
\providecommand{\natexlab}[1]{#1}
\providecommand{\url}[1]{\texttt{#1}}
\expandafter\ifx\csname urlstyle\endcsname\relax
  \providecommand{\doi}[1]{doi: #1}\else
  \providecommand{\doi}{doi: \begingroup \urlstyle{rm}\Url}\fi

\bibitem[Agarwal and Duchi(2011)]{agarwal2011distributed}
Alekh Agarwal and John~C. Duchi.
\newblock Distributed delayed stochastic optimization.
\newblock In \emph{Advances in Neural Information Processing Systems}, pages 873--881, 2011.

\bibitem[Ananthanarayanan et~al.(2013)Ananthanarayanan, Ghodsi, Shenker, and Stoica]{ananthanarayanan2013effective}
Ganesh Ananthanarayanan, Ali Ghodsi, Scott Shenker, and Ion Stoica.
\newblock Effective straggler mitigation: Attack of the clones.
\newblock In \emph{USENIX Symposium on Networked Systems Design and Implementation}, pages 185--198, 2013.

\bibitem[Arjevani et~al.(2023)Arjevani, Carmon, Duchi, Foster, Srebro, and Woodworth]{arjevani2023lower}
Yossi Arjevani, Yair Carmon, John~C. Duchi, Dylan~. Foster, Nathan Srebro, and Blake Woodworth.
\newblock Lower bounds for non-convex stochastic optimization.
\newblock \emph{Mathematical Programming}, 199\penalty0 (1):\penalty0 165--214, 2023.

\bibitem[Assran et~al.(2020)Assran, Aytekin, Feyzmahdavian, Johansson, and Rabbat]{assran2020advances}
Mahmoud Assran, Arda Aytekin, Hamid~Reza Feyzmahdavian, Mikael Johansson, and Michael~G. Rabbat.
\newblock Advances in asynchronous parallel and distributed optimization.
\newblock \emph{Proceedings of the IEEE}, 108\penalty0 (11):\penalty0 2013--2031, 2020.

\bibitem[Battash et~al.(2024)Battash, Wolf, and Lindenbaum]{battash2024revisiting}
Barak Battash, Lior Wolf, and Ofir Lindenbaum.
\newblock Revisiting the noise model of stochastic gradient descent.
\newblock In \emph{International Conference on Artificial Intelligence and Statistics}, pages 4780--4788, 2024.

\bibitem[Bottou and Bousquet(2007)]{bottou2007tradeoffs}
L{\'e}on Bottou and Olivier Bousquet.
\newblock The tradeoffs of large scale learning.
\newblock In \emph{Advances in Neural Information Processing Systems}, pages 161--168, 2007.

\bibitem[Bottou et~al.(2018)Bottou, Curtis, and Nocedal]{bottou2018optimization}
L{\'e}on Bottou, Frank~E. Curtis, and Jorge Nocedal.
\newblock Optimization methods for large-scale machine learning.
\newblock \emph{SIAM review}, 60\penalty0 (2):\penalty0 223--311, 2018.

\bibitem[Cayci and Eryilmaz(2023)]{cayci2023provably}
Semih Cayci and Atilla Eryilmaz.
\newblock Provably robust temporal difference learning for heavy-tailed rewards.
\newblock In \emph{Advances in Neural Information Processing Systems}, pages 25693--25711, 2023.

\bibitem[Cutkosky and Mehta(2020)]{cutkosky2020momentum}
Ashok Cutkosky and Harsh Mehta.
\newblock Momentum-based variance reduction in non-convex {SGD}.
\newblock In \emph{International Conference on Machine Learning}, pages 2236--2245. PMLR, 2020.

\bibitem[Dean and Barroso(2013)]{dean2013tail}
Jeffrey Dean and Luiz~Andr{\'e} Barroso.
\newblock The tail at scale.
\newblock \emph{Communications of the ACM}, 56\penalty0 (2):\penalty0 74--80, 2013.

\bibitem[Dutta et~al.(2018)Dutta, Joshi, Ghosh, Dube, and Nagpurkar]{dutta2018slow}
Sanghamitra Dutta, Gauri Joshi, Soumyadip Ghosh, Parijat Dube, and Priya Nagpurkar.
\newblock Slow and stale gradients can win the race: Error-runtime trade-offs in distributed {SGD}.
\newblock In \emph{International conference on artificial intelligence and statistics}, pages 803--812. PMLR, 2018.

\bibitem[Feyzmahdavian et~al.(2016)Feyzmahdavian, Aytekin, and Johansson]{feyzmahdavian2016asynchronous}
Hamid~Reza Feyzmahdavian, Arda Aytekin, and Mikael Johansson.
\newblock An asynchronous mini-batch algorithm for regularized stochastic optimization.
\newblock \emph{IEEE Transactions on Automatic Control}, 61\penalty0 (12):\penalty0 3740--3754, 2016.

\bibitem[Garg et~al.(2021)Garg, Zhanson, Parisotto, Prasad, Kolter, Lipton, Balakrishnan, Salakhutdinov, and Ravikumar]{garg2021proximal}
Saurabh Garg, Joshua Zhanson, Emilio Parisotto, Adarsh Prasad, Zico Kolter, Zachary Lipton, Sivaraman Balakrishnan, Ruslan Salakhutdinov, and Pradeep Ravikumar.
\newblock On proximal policy optimization’s heavy-tailed gradients.
\newblock In \emph{International Conference on Machine Learning}, pages 3610--3619, 2021.

\bibitem[Ghadimi and Lan(2013)]{ghadimi2013stochastic}
Saeed Ghadimi and Guanghui Lan.
\newblock Stochastic first-and zeroth-order methods for nonconvex stochastic programming.
\newblock \emph{SIAM journal on optimization}, 23\penalty0 (4):\penalty0 2341--2368, 2013.

\bibitem[Gokaslan and Cohen(2019)]{Gokaslan2019OpenWeb}
Aaron Gokaslan and Vanya Cohen.
\newblock {OpenWebText} corpus.
\newblock \url{http://Skylion007.github.io/OpenWebTextCorpus}, 2019.

\bibitem[Gorbunov et~al.(2020)Gorbunov, Danilova, and Gasnikov]{gorbunov2020stochastic}
Eduard Gorbunov, Marina Danilova, and Alexander Gasnikov.
\newblock Stochastic optimization with heavy-tailed noise via accelerated gradient clipping.
\newblock In \emph{Advances in Neural Information Processing Systems}, pages 15042--15053, 2020.

\bibitem[Gorbunov et~al.(2022)Gorbunov, Danilova, Dobre, Dvurechenskii, Gasnikov, and Gidel]{gorbunov2022clipped}
Eduard Gorbunov, Marina Danilova, David Dobre, Pavel Dvurechenskii, Alexander Gasnikov, and Gauthier Gidel.
\newblock Clipped stochastic methods for variational inequalities with heavy-tailed noise.
\newblock In \emph{Advances in Neural Information Processing Systems}, pages 31319--31332, 2022.

\bibitem[Gorbunov et~al.(2024)Gorbunov, Danilova, Shibaev, Dvurechensky, and Gasnikov]{gorbunov2024high}
Eduard Gorbunov, Marina Danilova, Innokentiy Shibaev, Pavel Dvurechensky, and Alexander Gasnikov.
\newblock High-probability complexity bounds for non-smooth stochastic convex optimization with heavy-tailed noise.
\newblock \emph{Journal of Optimization Theory and Applications}, pages 1--60, 2024.

\bibitem[G{\"u}rb{\"u}zbalaban et~al.(2022)G{\"u}rb{\"u}zbalaban, Simsekli, and Zhu]{gurbuzbalaban2021}
Mert G{\"u}rb{\"u}zbalaban, Umut Simsekli, and Lingjiong Zhu.
\newblock The heavy-tail phenomenon in {SGD}.
\newblock \emph{Mathematics of Operations Research}, 47\penalty0 (3):\penalty0 1825--1851, 2022.

\bibitem[He et~al.(2025)He, Lu, Sun, and Deng]{he2025complexity}
Chuan He, Zhaosong Lu, Defeng Sun, and Zhanwang Deng.
\newblock Complexity of normalized stochastic first-order methods with momentum under heavy-tailed noise.
\newblock \emph{arXiv preprint arXiv:2506.11214}, 2025.

\bibitem[Horvath et~al.(2021)Horvath, Laskaridis, Almeida, Leontiadis, Venieris, and Lane]{horvath2021fjord}
Samuel Horvath, Stefanos Laskaridis, Mario Almeida, Ilias Leontiadis, Stylianos Venieris, and Nicholas Lane.
\newblock {FjORD}: Fair and accurate federated learning under heterogeneous targets with ordered dropout.
\newblock In \emph{Advances in Neural Information Processing Systems}, pages 12876--12889, 2021.

\bibitem[H{\"u}bler et~al.(2025)H{\"u}bler, Fatkhullin, and He]{hubler2025}
Florian H{\"u}bler, Ilyas Fatkhullin, and Niao He.
\newblock From gradient clipping to normalization for heavy-tailed {SGD}.
\newblock In \emph{International Conference on Artificial Intelligence and Statistics}, pages 2413--2421, 2025.

\bibitem[Kairouz and McMahan(2021)]{kairouz2021advances}
Peter Kairouz and H.~Brendan McMahan.
\newblock Advances and open problems in federated learning.
\newblock \emph{Foundations and Trends in Machine Learning}, 14\penalty0 (1-2):\penalty0 1--210, 2021.

\bibitem[Koloskova et~al.(2022)Koloskova, Stich, and Jaggi]{koloskova2022sharper}
Anastasiia Koloskova, Sebastian~U. Stich, and Martin Jaggi.
\newblock Sharper convergence guarantees for asynchronous {SGD} for distributed and federated learning.
\newblock In \emph{Advances in Neural Information Processing Systems}, pages 17202--17215, 2022.

\bibitem[Laurent et~al.(2024)Laurent, He, and Dao]{laurent2024linear}
Olivier Laurent, Brian He, and Tri Dao.
\newblock Linear attention is (maybe) all you need.
\newblock \emph{arXiv preprint arXiv:2406.02000}, 2024.

\bibitem[Lian et~al.(2018)Lian, Zhang, Zhang, and Liu]{lian2018asynchronous}
Xiangru Lian, Wei Zhang, Ce~Zhang, and Ji~Liu.
\newblock Asynchronous decentralized parallel stochastic gradient descent.
\newblock In \emph{International Conference on Machine Learning}, pages 3043--3052, 2018.

\bibitem[Liu and Luo(2025)]{liu2025stochastic}
Zhuanghua Liu and Luo Luo.
\newblock Stochastic bilevel optimization with heavy-tailed noise.
\newblock \emph{arXiv preprint arXiv:2509.14952}, 2025.

\bibitem[Liu and Zhou(2023)]{liu2023stochastic}
Zijian Liu and Zhengyuan Zhou.
\newblock Stochastic nonsmooth convex optimization with heavy-tailed noises: High-probability bound, in-expectation rate and initial distance adaptation.
\newblock In \emph{International Conference on Machine Learning}, pages 21884--21914, 2023.

\bibitem[Liu and Zhou(2025)]{liunonconvex2025}
Zijian Liu and Zhengyuan Zhou.
\newblock Nonconvex stochastic optimization under heavy-tailed noises: Optimal convergence without gradient clipping.
\newblock In \emph{International Conference on Learning Representations}, 2025.

\bibitem[Maranjyan et~al.(2025)Maranjyan, Tyurin, and Richt\'{a}rik]{pmlr-v267-maranjyan25b}
Arto Maranjyan, Alexander Tyurin, and Peter Richt\'{a}rik.
\newblock Ringmaster {ASGD}: The first asynchronous {SGD} with optimal time complexity.
\newblock In \emph{International Conference on Machine Learning}, pages 43120--43139, 2025.

\bibitem[Mishchenko et~al.(2022)Mishchenko, Even, Bach, and Woodworth]{mishchenko2022asynchronous}
Konstantin Mishchenko, Mathieu Even, Francis Bach, and Blake Woodworth.
\newblock Asynchronous {SGD} beats minibatch {SGD} under arbitrary delays.
\newblock In \emph{Advances in Neural Information Processing Systems}, pages 8248--8264, 2022.

\bibitem[Moritz et~al.(2018)Moritz, Nishihara, Wang, Tumanov, Liaw, Liang, Elibol, Yang, Paul, Jordan, et~al.]{moritz2018ray}
Philipp Moritz, Robert Nishihara, Stephanie Wang, Alexey Tumanov, Richard Liaw, Eric Liang, Melih Elibol, Zongheng Yang, William Paul, Michael~I. Jordan, et~al.
\newblock {Ray}: A distributed framework for emerging {AI} applications.
\newblock In \emph{USENIX Symposium on Operating Systems Design and Implementation}, pages 561--577, 2018.

\bibitem[Nguyen et~al.(2023)Nguyen, Nguyen, Ene, and Nguyen]{nguyen2023high}
Ta~Duy Nguyen, Thien~Hang Nguyen, Alina Ene, and Huy~Le Nguyen.
\newblock High probability convergence of clipped-{SGD} under heavy-tailed noise.
\newblock \emph{arXiv preprint arXiv:2302.05437}, 2023.

\bibitem[Recht et~al.(2011)Recht, Re, Wright, and Niu]{recht2011hogwild}
Benjamin Recht, Christopher Re, Stephen Wright, and Feng Niu.
\newblock Hogwild!: A lock-free approach to parallelizing stochastic gradient descent.
\newblock In \emph{Advances in Neural Information Processing Systems}, pages 693--701, 2011.

\bibitem[Robbins and Monro(1951)]{robbins1951stochastic}
Herbert Robbins and Sutton Monro.
\newblock A stochastic approximation method.
\newblock \emph{The Annals of Mathematical Statistics}, pages 400--407, 1951.

\bibitem[Ryabinin et~al.(2021)Ryabinin, Gorbunov, Plokhotnyuk, and Pekhimenko]{ryabinin2021moshpit}
Max Ryabinin, Eduard Gorbunov, Vsevolod Plokhotnyuk, and Gennady Pekhimenko.
\newblock Moshpit {SGD}: Communication-efficient decentralized training on heterogeneous unreliable devices.
\newblock In \emph{Advances in Neural Information Processing Systems}, pages 18195--18211, 2021.

\bibitem[Sadiev et~al.(2023{\natexlab{a}})Sadiev, Danilova, Gorbunov, Horv{\'a}th, Gidel, Dvurechensky, Gasnikov, and Richt{\'a}rik]{sadiev2023high}
Abdurakhmon Sadiev, Marina Danilova, Eduard Gorbunov, Samuel Horv{\'a}th, Gauthier Gidel, Pavel Dvurechensky, Alexander Gasnikov, and Peter Richt{\'a}rik.
\newblock High-probability bounds for stochastic optimization and variational inequalities: the case of unbounded variance.
\newblock In \emph{International Conference on Machine Learning}, pages 29563--29648, 2023{\natexlab{a}}.

\bibitem[Sadiev et~al.(2023{\natexlab{b}})Sadiev, Dvurechensky, Gorbunov, Gasnikov, Horv{\'a}th, and Richt{\'a}rik]{sadiev2023}
Abdurakhmon Sadiev, Pavel Dvurechensky, Eduard Gorbunov, Alexander Gasnikov, Samuel Horv{\'a}th, and Peter Richt{\'a}rik.
\newblock High-probability convergence for stochastic optimization and variational inequalities with heavy-tailed noise.
\newblock \emph{arXiv preprint arXiv:2310.01860}, 2023{\natexlab{b}}.

\bibitem[Shi et~al.(2024)Shi, Yang, and Li]{shi2024ordered}
Chang-Wei Shi, Yi-Rui Yang, and Wu-Jun Li.
\newblock Ordered momentum for asynchronous {SGD}.
\newblock \emph{Advances in Neural Information Processing Systems}, 37:\penalty0 16943--16982, 2024.

\bibitem[Simsekli et~al.(2019)Simsekli, Sagun, Gurbuzbalaban, Chizat, Rudi, and Bach]{simsekli2019}
Umut Simsekli, Levent Sagun, Mert Gurbuzbalaban, L{\'e}na{\"i}c Chizat, Alessandro Rudi, and Francis Bach.
\newblock A tail-index analysis of stochastic gradient noise in deep neural networks.
\newblock In \emph{International Conference on Machine Learning}, pages 5827--5837. PMLR, 2019.

\bibitem[Sun et~al.(2025)Sun, Zhang, Chen, and Yu]{sun2025distributed}
Chao Sun, Huiming Zhang, Bo~Chen, and Li~Yu.
\newblock Distributed stochastic optimization under heavy-tailed noises.
\newblock \emph{IEEE Transactions on Automatic Control}, 2025.

\bibitem[Sun et~al.(2024)Sun, Liu, and Yuan]{sun2024gradient}
Tao Sun, Xinwang Liu, and Kun Yuan.
\newblock Gradient normalization provably benefits nonconvex {SGD} under heavy-tailed noise.
\newblock \emph{arXiv preprint arXiv:2410.16561}, 2024.

\bibitem[Tsitsiklis et~al.(2003)Tsitsiklis, Bertsekas, and Athans]{tsitsiklis2003distributed}
John Tsitsiklis, Dimitri Bertsekas, and Michael Athans.
\newblock Distributed asynchronous deterministic and stochastic gradient optimization algorithms.
\newblock \emph{IEEE transactions on automatic control}, 31\penalty0 (9):\penalty0 803--812, 2003.

\bibitem[Tyurin(2025)]{tyurin2025tight}
Alexander Tyurin.
\newblock Tight time complexities in parallel stochastic optimization with arbitrary computation dynamics.
\newblock In \emph{International Conference on Learning Representations}, 2025.

\bibitem[Tyurin and Richtarik(2023)]{tyurin2023optimal}
Alexander Tyurin and Peter Richtarik.
\newblock Optimal time complexities of parallel stochastic optimization methods under a fixed computation model.
\newblock In \emph{Advances in Neural Information Processing Systems}, pages 73636--73669, 2023.

\bibitem[Wang et~al.(2026)Wang, Liu, and Luo]{wang2025near}
Menglian Wang, Zhuanghua Liu, and Luo Luo.
\newblock Near-optimal decentralized stochastic nonconvex optimization with heavy-tailed noise.
\newblock \emph{arXiv preprint arXiv:2601.11435}, 2026.

\bibitem[Yu et~al.(2025)Yu, Jakovetic, and Kar]{yu2025decentralized}
Shuhua Yu, Dusan Jakovetic, and Soummya Kar.
\newblock Decentralized nonconvex optimization under heavy-tailed noise: Normalization and optimal convergence.
\newblock \emph{arXiv preprint arXiv:2505.03736}, 2025.

\bibitem[Zhang et~al.(2020)Zhang, Karimireddy, Veit, Kim, Reddi, Kumar, and Sra]{zhang2020adaptive}
Jingzhao Zhang, Sai~Praneeth Karimireddy, Andreas Veit, Seungyeon Kim, Sashank Reddi, Sanjiv Kumar, and Suvrit Sra.
\newblock Why are adaptive methods good for attention models?
\newblock In \emph{Advances in Neural Information Processing Systems}, pages 15383--15393, 2020.

\bibitem[Zhang and Gao(2025)]{zhang2025federated}
Xinwen Zhang and Hongchang Gao.
\newblock Federated stochastic minimax optimization under heavy-tailed noises.
\newblock \emph{arXiv preprint arXiv:2511.04456}, 2025.

\bibitem[Zhu et~al.(2024)Zhu, Wan, Qi, Luo, and Shi]{zhu2024robust}
Jin Zhu, Runzhe Wan, Zhengling Qi, Shikai Luo, and Chengchun Shi.
\newblock Robust offline reinforcement learning with heavy-tailed rewards.
\newblock In \emph{International Conference on Artificial Intelligence and Statistics}, pages 541--549, 2024.

\end{thebibliography}

\newpage
\appendix

\section{The Proofs for Results in Section \ref{sec:alg-main}}
\label{sec:ringmaster_nsgd_momentum}

We first provide two supporting lemmas for our later analysis.
\begin{lem}[{\citet[Lemma 7]{hubler2025}}]
\label{lem:angle}
For all $a,b \in \mathbb{R}^d$ with $b \neq 0$, we have
$$
\frac{a^\top b}{\| b \|} \ge \|a\| - 2\|a-b\|.
$$
\end{lem}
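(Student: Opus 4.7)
The plan is to reduce the claim to two elementary inequalities by decomposing $a$ as $(a-b)+b$ so that the projection $a^\top b/\|b\|$ splits into a controllable error term plus $\|b\|$. Concretely, I would write
\begin{equation*}
\frac{a^\top b}{\|b\|} \;=\; \frac{((a-b)+b)^\top b}{\|b\|} \;=\; \frac{(a-b)^\top b}{\|b\|} \;+\; \|b\|,
\end{equation*}
which is the main identity driving the argument.

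Next, I would bound the first term by Cauchy--Schwarz: since $b/\|b\|$ is a unit vector, $(a-b)^\top b/\|b\| \ge -\|a-b\|$, giving $\frac{a^\top b}{\|b\|} \ge \|b\| - \|a-b\|$. Finally, a single application of the triangle inequality in the reverse form yields $\|b\| \ge \|a\| - \|a-b\|$, and plugging this into the previous step produces the claimed bound $\frac{a^\top b}{\|b\|} \ge \|a\| - 2\|a-b\|$.

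There is essentially no obstacle here: the whole proof is a two-line manipulation of the identity above combined with Cauchy--Schwarz and the reverse triangle inequality. The only mild subtlety is making sure the Cauchy--Schwarz step is set up with the correct sign (we need a lower bound on an inner product, so the absolute-value form $|(a-b)^\top b| \le \|a-b\|\,\|b\|$ is what is actually used), and noting that the hypothesis $b \neq 0$ is exactly what is required to divide by $\|b\|$ in the initial rewriting.
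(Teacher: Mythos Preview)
Your proof is correct. The paper does not provide its own proof of this lemma; it is quoted directly from \citet[Lemma~7]{hubler2025} and used as a black box, so there is nothing to compare against beyond noting that your argument is the standard two-line derivation via Cauchy--Schwarz and the reverse triangle inequality.
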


\begin{lem}[{\citet[Lemma~10]{hubler2025}}]
\label{lem:vbe_mds}
Let $p\in[1,2]$, and $X_1,\dots,X_n\in\R^d$ be a martingale difference sequence (MDS), i.e.,
\begin{align*}
\E\!\left[X_j\,\middle|\,X_{j-1},\dots,X_1\right]=0
\quad \text{a.s. for all } j=1,\dots,n
\end{align*}
satisfying
\begin{align*}
\E\!\left[\Norm{X_j}^p\right]<\infty
\qquad \text{for all } j=1,\dots,n.
\end{align*}
Define $S_n:=\sum_{j=1}^n X_j$, then
\begin{align*}
\E\!\left[\Norm{S_n}^p\right]\le 2\sum_{j=1}^n \E\!\left[\Norm{X_j}^p\right].
\end{align*}
\end{lem}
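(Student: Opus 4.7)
The plan is induction on $n$, combined with a pointwise smoothness-type inequality for $\Norm{\cdot}^p$ in Hilbert space that interacts cleanly with the martingale difference property. This is the vector-valued analogue of the classical von Bahr--Esseen argument.

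The first step is to establish the pointwise bound
\begin{equation*}
\Norm{a+b}^p \le \Norm{a}^p + p\inner{\phi(a)}{b} + 2\Norm{b}^p, \qquad \forall\,a,b\in\R^d,~p\in[1,2],
\end{equation*}
where $\phi(a):=\Norm{a}^{p-2}a$ for $a\neq 0$ and $\phi(0):=0$. The case $p=2$ is immediate from the parallelogram identity (with constant $1$) and the case $p=1$ follows from the triangle inequality combined with the fact that $\phi(a)=a/\Norm{a}$ is a unit vector. For intermediate $p\in(1,2)$, I would split on whether $\Norm{b}\le c\Norm{a}$ or $\Norm{b}>c\Norm{a}$ for a suitable threshold $c$. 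In the first regime, expanding $\Norm{a+b}^p-\Norm{a}^p=p\int_0^1\inner{\Norm{a+tb}^{p-2}(a+tb)}{b}\,dt$ and using the $(p-1)$-Hölder continuity of $\phi$ away from the origin controls the integrand against $\inner{\phi(a)}{b}+O(\Norm{b}^p)$. In the second regime $\Norm{a}$ is comparable to $\Norm{b}$, and a direct subadditivity argument absorbs both $\Norm{a+b}^p$ and $|\inner{\phi(a)}{b}|$ into $O(\Norm{b}^p)$; optimizing over $c$ delivers the constant $2$.

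Given this pointwise inequality, I would apply it with $a=S_{k-1}$ and $b=X_k$, and take the conditional expectation with respect to the natural filtration $\fF_{k-1}:=\sigma(X_1,\dots,X_{k-1})$. Because $\{X_j\}$ is a martingale difference sequence, $\E[X_k\mid\fF_{k-1}]=0$, so the linear term $p\inner{\phi(S_{k-1})}{\E[X_k\mid\fF_{k-1}]}$ vanishes. Taking total expectations then produces the one-step recursion
\begin{equation*}
\E\Norm{S_k}^p \le \E\Norm{S_{k-1}}^p + 2\,\E\Norm{X_k}^p,
\end{equation*}
which is well-posed because the crude bound $\Norm{S_k}^p\le k^{p-1}\sum_{j\le k}\Norm{X_j}^p$ guarantees $\E\Norm{S_k}^p<\infty$ under the hypothesis $\E\Norm{X_j}^p<\infty$. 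Iterating from the trivial base case $\E\Norm{S_1}^p=\E\Norm{X_1}^p$ yields the claimed bound $\E\Norm{S_n}^p\le 2\sum_{j=1}^n\E\Norm{X_j}^p$.

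The main obstacle is calibrating the pointwise inequality so that the coefficient on $\Norm{b}^p$ stays at most $2$ uniformly over $p\in[1,2]$: a single Taylor expansion about $a$ alone would produce a coefficient that degenerates as $\Norm{a}\to 0$ relative to $\Norm{b}$, which is precisely why the two-regime split on $\Norm{b}/\Norm{a}$ is essential. Once that pointwise bound is secured, the martingale structure eliminates the cross term for free and the induction closes with no further loss.
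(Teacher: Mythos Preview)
The paper does not supply its own proof of this lemma: it is quoted verbatim as Lemma~10 of \citet{hubler2025} and used as a black box in the proof of Lemma~\ref{lem:mom_dev_tight}. So there is no in-paper argument to compare against.

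That said, your proposal is the standard von~Bahr--Esseen strategy and is sound in outline. The induction step is exactly right once the pointwise inequality
\[
\Norm{a+b}^p \le \Norm{a}^p + p\,\Norm{a}^{p-2}\inner{a}{b} + 2\Norm{b}^p
\]
is available, since the MDS property kills the cross term after conditioning on $\fF_{k-1}$, and the integrability check via $\Norm{S_k}^p\le k^{p-1}\sum_{j\le k}\Norm{X_j}^p$ is correct. The only soft spot is the proof of the pointwise inequality itself: your two-regime split is the right idea, but the ``optimizing over $c$ delivers the constant $2$'' step deserves more care, since the $(p-1)$-H\"older constant of $\phi(x)=\Norm{x}^{p-2}x$ interacts with the threshold in a way that is not obviously uniform in $p\in(1,2)$. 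One clean route that avoids this bookkeeping is to reduce to a two-dimensional (indeed, scalar-parametrized) problem by writing $\Norm{a+b}^2=\Norm{a}^2+2\Norm{a}\Norm{b}\cos\theta+\Norm{b}^2$ and verifying the single-variable inequality $(1+2sc+s^2)^{p/2}\le 1+psc+2s^p$ for $s\ge 0$ and $c\in[-1,1]$; this is what the scalar von~Bahr--Esseen computation actually establishes, and it transfers to $\R^d$ with the Euclidean norm without loss.
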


Recall that the update on the server can be written as

\begin{empheq}[left=\empheqlbrace]{align}
v_{k+1} &= \beta_k v_k + (1-\beta) \nabla F(x_{k-\delta_k};\xi_{k-\delta_k}), \nonumber \\[0.2cm]
x_{k+1} &= x_k - \dfrac{\eta v_{k+1}}{\|v_{k+1}\|}, \label{eq:x_update}
\end{empheq}
where 
\begin{align*}
    \beta_k \gets \begin{cases}
          0,  & k\leq 1 \\[0.02cm]
          \beta,  & k > 1,
    \end{cases}
    \qquad \beta \in [0,1),
    \qquad \eta>0,
    \qquad\text{and}\qquad
    \delta_k<R.
\end{align*}
where $z_k:=x_{k-\delta_k}$ with $\delta_k\le R$.

We initialize $v_0:=0$ and define 
\begin{align*}
    z_k:=x_{k-\delta_k},
\end{align*}
which leads to $z_0=x_0$ and
\begin{equation}\label{eq:mom_update_clean}
v_{k+1} \;=\; \beta_k v_k + (1-\beta) \nabla F(z_k;\xi_{k-\delta_k}).
\end{equation}
We further define the noise at the stale point as
\begin{align}
\label{eq:zetak_clean}
\zeta_k:=\nabla F(z_k;\xi_{k-\delta_k})-\nabla f(z_k),
\end{align}
which is unbiased for given $z_k$.

We then provide the proofs in Section \ref{sec:alg-main} for our upper bounds.

\subsection{The Proof of Lemma \ref{lem:staleness}}

\begin{proof}
According to the update rule \eqref{eq:x_update}
we have
\begin{align*}
    \|x_k - x_{k-\delta_k}\| 
     =  \left\| \sum_{j=k-\delta_k}^{k-1} (x_{j+1} - x_j) \right\| 
     \le  \sum_{j=k-\delta_k}^{k-1} \eta \Norm{\frac{v_{j+1}}{\|v_{j+1}\|}} 
    = \eta \cdot \delta_k \leq R\eta,
\end{align*}
where first inequality is based on the triangle inequality and the last inequality is based on the algorithm setting that the update only performs when $\delta_k \leq R$.
\end{proof}

\subsection{The Proof of Lemma \ref{lem:descent_mom}}

\begin{proof}
Based on Assumption \ref{ass:smooth} and the update rule (\ref{eq:x_update}), we have
\begin{align*}
f(x_{k+1})
&\le f(x_k)+\inner{\nabla f(x_k)}{x_{k+1}-x_k}+\frac{L}{2}\|x_{k+1}-x_k\|^2\\
&= f(x_k)-\eta\inner{\nabla f(x_k)}{\frac{v_{k+1}}{\|v_{k+1}\|}}+\frac{L\eta^2}{2}\\
&\le f(x_k)
-\eta\|\nabla f(x_k)\|
+2\eta\|v_{k+1}-\nabla f(x_k)\|
+\frac{L\eta^2}{2}.
\end{align*}
where the last inequality is based on Lemma \ref{lem:angle} with $a=\nabla f(x_k)$ and $b=v_{k+1}$. 
\end{proof}

\subsection{The Proof of Lemma \ref{lem:mom_dev_tight}}

\begin{proof}
We define 
\begin{align*}
\mu_k:=v_{k+1}-\nabla f(x_k).
\end{align*}
For all $k\ge 1$, combining equations (\ref{eq:mom_update_clean}) and (\ref{eq:zetak_clean}) leads to
\begin{align}
\label{eq:mu_decomp_clean}
\mu_k
&= \beta_k v_k + (1-\beta)(\nabla f(z_k)+\zeta_k) - \nabla f(x_k) \nonumber\\
&= \beta_k\big(v_k-\nabla f(x_{k-1})\big)
+ \beta_k\big(\nabla f(x_{k-1})-\nabla f(x_k)\big)
+ (1-\beta)\big(\nabla f(z_k)-\nabla f(x_k)\big)
+ (1-\beta)\zeta_k \nonumber\\
&= \beta_k \mu_{k-1}
+ \underbrace{\beta_k\big(\nabla f(x_{k-1})-\nabla f(x_k)\big)}_{A_k}
+ \underbrace{(1-\beta)\big(\nabla f(z_k)-\nabla f(x_k)\big)}_{B_k}
+ \underbrace{(1-\beta)\zeta_k}_{C_k}.
\end{align}
Hence, we can write \eqref{eq:mu_decomp_clean} as
\begin{align}\label{eq:recursion-mu-k}
\mu_k = \beta_k \mu_{k-1} + A_k + B_k + C_k.
\end{align}
For $k=1$, we have $\beta_1=0$,
then \eqref{eq:mu_decomp_clean} implies
\begin{align}\label{eq:recursion-mu-0}
\mu_1 = \beta_1\mu_0 + A_1 + B_1 + C_1 = A_1 + B_1 + C_1.
\end{align}

Combining equations (\ref{eq:recursion-mu-k}) and (\ref{eq:recursion-mu-0}) yields
\begin{equation}
\label{eq:mu_sum_reduced_clean}
\mu_k
= \sum_{t=1}^k \beta^{k-t}(A_t+B_t+C_t)
\end{equation}
for all $k\geq 1$.

We then consider the norms of $A_t$, $B_t$, and $C_t$, respectively.
\begin{itemize}
    \item For the term $A_t$, we have
\begin{equation}
\label{eq:A_bound_chain}
\|A_t\|
= \beta_t\|\nabla f(x_{t-1})-\nabla f(x_t)\|
\le \beta_t L\|x_{t-1}-x_t\|
= \beta_t L\eta
\le \beta L\eta,
\end{equation}
the first inequality is based on Assumption \ref{ass:smooth};
the second equality is based on the update rule (\ref{eq:x_update});
and the last step is based on the fact $\beta_t\le \beta$ since $\beta_1=0$ and $\beta_t=\beta$ for $t\ge 2$.
\item For the term $B_t$, we have
\begin{equation}
\label{eq:B_bound_chain}
\|B_t\|
= (1-\beta)\|\nabla f(z_t)-\nabla f(x_t)\|
\le (1-\beta) L\|z_t-x_t\|
\le (1-\beta) LR\eta.
\end{equation}
where the first inequality is based on Assumption \ref{ass:smooth} 
and the second inequality is based on Lemma \ref{lem:staleness}.
\item For the term $C_t$, we have
\begin{align}\label{eq:noise_l1_chain0}
\begin{split}    
\E\left[\Norm{\sum_{t=1}^k \beta^{k-t}C_t}\right]
= \E\left[\Norm{\sum_{t=1}^k (1-\beta)\beta^{k-t}\zeta_t}\right]
\le  \left(\E\left\|\sum_{t=1}^k (1-\beta)\beta^{k-t}\zeta_t \right\|^p\right)^{\frac1p},
\end{split}
\end{align}
where we use Jensen's inequality.
Let $\{\mathcal{F}_t\}_{t\ge 0}$ be the filtration generated by the server history up to round $t$, defined as
\begin{align*}
\mathcal{F}_t:=\sigma\!\left(x_0,\ z_1,\dots,z_t,\ \zeta_1,\dots,\zeta_t\right).
\end{align*}
Then $z_t$ is $\mathcal{F}_{t-1}$-measurable and $\zeta_t$ is revealed at round $t$.
Hence, by Assumption~\ref{ass:pBCM},
\begin{align*}
\E[\zeta_t\mid \mathcal{F}_{t-1}] = \E[\zeta_t\mid z_t]=0.
\end{align*}
Therefore, $\{(1-\beta)\beta^{k-t}\zeta_t\}_{t\ge 1}$ is a martingale difference sequence with respect to $\{\mathcal{F}_t\}_{t\ge 0}$.
Hence, we apply Lemma~\ref{lem:vbe_mds} to obtain
\begin{align}\label{eq:noise_l1_chain1}
\begin{split}    
\E\left[\left\|\sum_{t=1}^k (1-\beta)\beta^{k-t}\zeta_t\right\|^p\right]
&\le 2\sum_{t=1}^k \E\|(1-\beta)\beta^{k-t}\zeta_t\|^p
= 2(1-\beta)^p\sum_{t=1}^k \beta^{p(k-t)}\E\|\zeta_t\|^p \\
&\le 2(1-\beta)^p \sum_{j=0}^{k-1}\beta^{pj}\sigma^p
\le \frac{2(1-\beta)^p\sigma^p}{1-\beta^p},
\end{split}
\end{align}
where the second inequality is based on Assumption~\ref{ass:pBCM}.

Combining equations (\ref{eq:noise_l1_chain0}) and (\ref{eq:noise_l1_chain1}), we have
\begin{equation}
\label{eq:noise_l1_chain}
\E \left[\left\|\sum_{t=1}^k \beta^{k-t}C_t\right\|\right] 
\le \left(\frac{2(1-\beta)^p\sigma^p}{1-\beta^p}\right)^{\frac1p}
\le \left(\frac{2(1-\beta)^p\sigma^p}{1-\beta}\right)^{\frac1p}
=2^{\frac{1}{p}}(1-\beta)^{\frac{p-1}{p}}\sigma,
\end{equation}
where the second inequality is based on the settings of $\beta\in[0,1)$ and $p>1$ that lead to $1-\beta^p\ge 1-\beta$.
\end{itemize}
Combining equations (\ref{eq:mu_sum_reduced_clean}), (\ref{eq:A_bound_chain}), (\ref{eq:B_bound_chain}), and (\ref{eq:noise_l1_chain}), we have
\begin{align*}
\E\left[\|\mu_k\|\right]
&\le \E\left[\Norm{\sum_{t=1}^k \beta^{k-t}A_t}\right] + \E\left[\Norm{\sum_{t=1}^k \beta^{k-t}B_t}\right]
+ \E\left[\Norm{\sum_{t=1}^k \beta^{k-t}C_t}\right] \\
&\le \sum_{t=1}^k \beta^{k-t}\cdot\beta L\eta +  \sum_{t=1}^k \beta^{k-t}\cdot(1-\beta) LR\eta + 2^{\frac1p}\sigma(1-\beta)^{\frac{p-1}{p}} \\
&\le L\eta\sum_{t=1}^{k}\beta^{t} + (1-\beta) LR\eta\sum_{t=0}^{k-1}\beta^t  + 2^{\frac1p}\sigma(1-\beta)^{\frac{p-1}{p}} \\
&\le \frac{L\eta}{1-\beta} + LR\eta + 2^{\frac1p}\sigma(1-\beta)^{\frac{p-1}{p}},
\end{align*}
which finishes the proof.
\end{proof}

\subsection{The Proof of Theorem \ref{thm:main_mom_tight}}
\begin{proof}
Combining results of Lemmas~\ref{lem:mom_dev_tight} and \ref{lem:descent_mom}, we have
\begin{align*}
\begin{split}    
\E[f(x_{k+1})]
&\le \E[f(x_k)]-\eta\E\|\nabla f(x_k)\|
+2\eta\Big(\frac{L\eta}{\alpha}+LR\eta+ 2^{\frac1p}\sigma\alpha^{\frac{p-1}{p}}\Big)
+\frac{L\eta^2}{2} \\
&= \E[f(x_k)]-\eta\E\|\nabla f(x_k)\|
+2L\eta^2\Big(\frac{1}{\alpha}+R+\frac{1}{4}\Big)
+2^{\frac{p+1}{p}}\eta \sigma\alpha^{\frac{p-1}{p}}.
\end{split}
\end{align*}
Rearranging above inequality and taking the average over $k=0,\dots,K-1$, we have
\begin{align}\label{eq:avg_grad_bound_refined}    
\begin{split}    
\frac{1}{K}\sum_{k=0}^{K-1}\E\|\nabla f(x_k)\|
& \le \frac{f(x_0) - f(x_K)}{\eta K}
+ L\eta \left(\frac{2}{\alpha} + 2R + \frac{1}{2}\right)
+ 2^{\frac{p+1}{p}} \sigma\alpha^{\frac{p-1}{p}} \\
& \le \frac{\Delta}{{\eta K}}
+ 2L\eta \left(\frac{1}{\alpha} + R + 1\right)
+ 2^{\frac{p+1}{p}}\sigma\alpha^{\frac{p-1}{p}},
\end{split}
\end{align}

where the last step is based on  Assumption \ref{ass:smooth}.

For the right-hand side of equation (\ref{eq:avg_grad_bound_refined}), the settings of $\eta$, $K$, and $\beta$ implies
\begin{align}\label{eq:avg_grad_bound_1}
     \frac{\Delta}{\eta K} \leq \frac{\epsilon}{3}
     \qquad\text{and}\qquad
     2^{\frac{p+1}{p}}\sigma\alpha^{\frac{p-1}{p}} \leq \frac{\epsilon}{3}.
\end{align}
Additionally, we have
\begin{align*}
    R = \left\lceil \frac{1}{\alpha}\right\rceil
    \leq \frac{1}{\alpha} + 1,
\end{align*}
which implies
\begin{align}\label{eq:avg_grad_bound_2}
    2L\eta\Big(\frac{1}{\alpha}+R+1\Big)
\le 2L\eta\Big(\frac{1}{\alpha}+\frac{1}{\alpha}+1+1\Big)    
\le 2L\cdot\frac{\alpha\epsilon}{24L}\cdot\frac{4}{\alpha}
= \frac{\epsilon}{3}.
\end{align}
Combining equations (\ref{eq:avg_grad_bound_refined}), (\ref{eq:avg_grad_bound_1}), and  (\ref{eq:avg_grad_bound_2}) yields
\begin{align*}
   \E\|\nabla f(\hat x)\| = \frac{1}{K}\sum_{k=0}^{K-1}\E\|\nabla f(x_k)\| \leq  \epsilon.
\end{align*}
\end{proof}

\subsection{The Proof of Corollary \ref{cor:time_mom_refined}}
We first introduce the following lemma for the strategy of delay threshold for the fixed computation model~\citep{pmlr-v267-maranjyan25b}.
\begin{lem}[{\citet[Lemma 4.1]{pmlr-v267-maranjyan25b}}]
\label{lem:ringmaster_time}
Under Assumption~\ref{ass:time}, let $R\ge 1$ be the delay threshold, and let $t(R)$ be the wall-clock time required to complete any $R$ consecutive accepted updates. Then it holds
\begin{equation}
\label{eq:ringmaster_lemma}
t(R) \le 2 \min_{m\in[n]} \left[\left( \frac{1}{m}\sum_{i=1}^m \frac{1}{\tau_i} \right)^{-1} \left( 1 + \frac{R}{m} \right)\right].
\end{equation}
\end{lem}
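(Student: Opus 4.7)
The plan is to prove Lemma~\ref{lem:ringmaster_time} by combining a throughput lower bound for the $m$ fastest workers with the observation that, inside a short enough window, every in-window request becomes an accepted update.

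First I would fix $m\in[n]$ and, without loss of generality, sort the workers so that $\tau_1\le\tau_2\le\cdots\le\tau_n$, writing $h_m:=\tfrac1m\sum_{i=1}^m 1/\tau_i$. The goal is to show that over any wall-clock window of length $T:=2\,h_m^{-1}(1+R/m)$ the server has accepted at least $R$ updates; taking the minimum over $m$ then yields the stated bound.

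Second, I would establish a throughput lower bound. Because the worker side of Algorithm~\ref{alg:ringmaster_nsgd_mom_server} sends a new data request immediately upon returning a gradient, every worker is busy at all times. Under Assumption~\ref{ass:time}, worker $i$ finishes each gradient in at most $\tau_i$ units of time, so within any time interval of length $T$ it completes at least $\lfloor T/\tau_i\rfloor$ gradients whose request was issued at some point in that interval. Summing over the $m$ fastest workers gives a lower bound on the number of ``in-window'' gradient completions of
\begin{equation*}
\sum_{i=1}^m \left\lfloor \frac{T}{\tau_i} \right\rfloor
\;\ge\; \sum_{i=1}^m \left( \frac{T}{\tau_i} - 1 \right)
\;=\; T\, m\, h_m - m
\;=\; 2(m+R) - m
\;=\; m + 2R \;\ge\; R.
\end{equation*}

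Third, and this is the conceptual key, I would argue that every gradient whose request is issued inside the window is necessarily accepted. Suppose for contradiction that fewer than $R$ accepted updates occur in the window; then the server's iteration counter $k$ advances by strictly less than $R$ during the window. Any gradient whose request was issued at an interior time $t$ carried the server's current counter value $\iota$ at that moment, and it arrives at a later time with counter value $k'$ at most $\iota$ plus the number of acceptances between $t$ and arrival, which is bounded by the total window acceptances, hence less than $R$. So $k'-\iota<R$, forcing the server to accept by the threshold rule in line 9 of Algorithm~\ref{alg:ringmaster_nsgd_mom_server}. Combined with the previous step this yields at least $R$ accepted updates, contradicting the assumption. (Pre-window in-flight gradients can only add extra acceptances; they never invalidate the lower count.)

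The one subtle step is the third one, where the contrapositive argument must be carried out cleanly: the same quantity, ``number of acceptances inside the window'', is used both to bound staleness from above and to count accepted updates from below. Once that is in place, the other steps are mechanical. Finally, taking the minimum over $m\in[n]$ on the right-hand side and observing that the slack factor $2$ absorbs all floor/ceiling loss yields the stated inequality
\begin{equation*}
t(R) \;\le\; 2\,\min_{m\in[n]}\left[\left(\frac{1}{m}\sum_{i=1}^m \frac{1}{\tau_i}\right)^{-1}\!\left(1+\frac{R}{m}\right)\right].
\end{equation*}
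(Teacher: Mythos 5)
This lemma is not proved in the paper at all: it is imported verbatim as Lemma~4.1 of \citet{pmlr-v267-maranjyan25b}, so there is no in-paper proof to compare against. Your reconstruction follows the standard argument from that reference (throughput of the $m$ fastest workers over a window of length $T=2h_m^{-1}(1+R/m)$, plus the observation that if fewer than $R$ acceptances occur in the window then every in-window gradient has delay $<R$ and must be accepted), and the logic is sound.

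One counting step is stated imprecisely. A worker that is mid-computation at the start of the window completes at least $\lfloor T/\tau_i\rfloor$ gradients \emph{in total} during the window, but the first of these may correspond to a request issued \emph{before} the window, so the number of completions whose requests lie inside the window is only guaranteed to be $\lfloor T/\tau_i\rfloor-1\ge T/\tau_i-2$. Your parenthetical remark that pre-window in-flight gradients ``only add extra acceptances'' has the issue backwards: those gradients may be too stale to be accepted, so they must be \emph{excluded} from the guaranteed count rather than credited to it. The corrected tally is
\begin{equation*}
\sum_{i=1}^m\left(\frac{T}{\tau_i}-2\right)=T\,m\,h_m-2m=2(m+R)-2m=2R\ \ge\ R,
\end{equation*}
so the factor $2$ in the window length still absorbs the loss and the conclusion stands; you should just replace the claim ``at least $\lfloor T/\tau_i\rfloor$ gradients whose request was issued in that interval'' with the $\lfloor T/\tau_i\rfloor-1$ version. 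The rest of the argument, including the self-referential use of the in-window acceptance count both as a staleness bound and as the quantity being lower-bounded, is handled correctly.
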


We now provide the proof of Corollary \ref{cor:time_mom_refined}.

\begin{proof}

Without loss of generality, we focus on the small $\epsilon>0$ such that
\begin{equation}\label{eq:eps_nontrivial}
\epsilon \ \le\ \sqrt{2L\Delta}.
\end{equation}
Otherwise, Assumption \ref{ass:smooth} implies
\begin{align*}
\begin{split}    
f^* \leq &  f\left(x_0-\frac{1}{L}\nabla f(x_0)\right) \\
\leq & f(x_0) - \inner{\nabla f(x_0)}{\frac{1}{L}\nabla f(x_0)} + \frac{L}{2}\Norm{\frac{1}{L}\nabla f(x_0)}^2 \\
\leq & f(x_0) - \frac{1}{2L}\Norm{\nabla f(x_0)}^2.
\end{split}
\end{align*}
Rearranging above inequality by taking $\epsilon>\sqrt{2L\Delta}$ leads to
\begin{align*}
\Norm{\nabla f(x_0)} \leq \sqrt{2L(f(x_0) - f^*)} = \sqrt{2L\Delta} < \epsilon,
\end{align*}
which means $x_0$ is already an $\epsilon$-stationary point and the claim is trivial.
Hence, we assume $\epsilon \ \le\ \sqrt{2L\Delta}$ throughout the remainder of the proof.

According to Theorem~\ref{thm:main_mom_tight}, we have
\begin{align*}
   \E\|\nabla f(\hat x)\| \leq  \epsilon.
\end{align*}
Hence, we only need to consider the time complexity of the algorithm.
According to Lemma~\ref{lem:ringmaster_time},
the time complexity required to complete any $R$ consecutive  accepted updates $t(R)$ has the upper bound
\begin{equation}
\label{eq:t1R_refined}
t(R)\ \le\ 2\min_{m\in[n]}\left[\left(\frac{1}{m}\sum_{i=1}^m\frac{1}{\tau_i}\right)^{-1}\left(1+\frac{R}{m}\right)\right].
\end{equation}
By partitioning all $K$ accepted updates into blocks with length $R$, the overall time complexity $T_p(\epsilon)$ of the algorithm is bounded by
\begin{align}\label{eq:T_expansion_strict}
\begin{split}    
 T_p(\epsilon)
\le & t(R)\left\lceil\frac{K}{R}\right\rceil \\
\le &
2\min_{m\in[n]}
\left(\frac{1}{m}\sum_{i=1}^m\frac{1}{\tau_i}\right)^{-1}
\left(1+\frac{R}{m}\right)\left(\frac{K}{R}+1\right)\\
= &
2\min_{m\in[n]}
\left(\frac{1}{m}\sum_{i=1}^m\frac{1}{\tau_i}\right)^{-1}
\left(\frac{K}{m}+\frac{R}{m}+\frac{K}{R}+1\right).
\end{split}
\end{align}
where the second step is based on 
the fact $\lceil x\rceil\le x+1$ and equation (\ref{eq:t1R_refined}).

According to \eqref{eq:eps_nontrivial} and the setting $\alpha \le 1$, we have
\begin{align}\label{eq:Kbound}
K  =  \left\lceil \frac{72L\Delta}{\alpha\epsilon^2}\right\rceil
\le \frac{72L\Delta}{\alpha\epsilon^2}+1
\le \left(1+\frac{1}{36}\right)\frac{72L\Delta}{\alpha\epsilon^2}
\ =\ \frac{74 L\Delta}{\alpha\epsilon^2}.
\end{align}
Consequently, we have
\begin{align}
\label{eq:Km_new}
\frac{K}{m}\ \le\ \frac{74 L\Delta}{m\alpha\epsilon^2}.
\end{align}

Moreover, the parameter settings imply $\alpha\le 1$ and $R\le \lceil 1/\alpha\rceil\le 1/\alpha+1\le 2/\alpha$, which means
\begin{align}\label{eq:Rm_new}
\frac{R}{m} \le \frac{2}{m\alpha} \le \frac{4L\Delta}{m\alpha\epsilon^2}.
\end{align}
where the last inequality is based on \eqref{eq:eps_nontrivial}.

According to \eqref{eq:Kbound}, we have
\begin{align}\label{eq:KR_new}
\frac{K}{R}
\ \le\ \frac{74L\Delta}{\alpha\epsilon^2}\cdot\frac{1}{R}
\leq \frac{74L\Delta}{\alpha\epsilon^2}\cdot \alpha
\leq \frac{74L\Delta}{\epsilon^2},
\end{align}
where the second inequality is based on the setting $R=\lceil 1/\alpha \rceil \ge 1/\alpha $.

Combining equations (\ref{eq:eps_nontrivial}), (\ref{eq:T_expansion_strict}), (\ref{eq:Km_new}), (\ref{eq:Rm_new}), (\ref{eq:KR_new}) and the setting of $\alpha$ in Theorem \ref{thm:main_mom_tight}, 
we obtain
\begin{align*}    
\begin{split}    
T_p(\epsilon) &\leq  2\min_{m\in[n]}
\left(\frac{1}{m}\sum_{i=1}^m\frac{1}{\tau_i}\right)^{-1}
\left(\frac{74L\Delta}{m\alpha\epsilon^2}  + \frac{4L\Delta}{m\alpha\epsilon^2}  + \frac{74L\Delta}{\epsilon^2} + 1\right) \\
&\leq  2\min_{m\in[n]}
\left(\frac{1}{m}\sum_{i=1}^m\frac{1}{\tau_i}\right)^{-1}
\left(\frac{74L\Delta}{m\alpha\epsilon^2}  + \frac{4L\Delta}{m\alpha\epsilon^2}  + \frac{74L\Delta}{\epsilon^2} + \frac{2L\Delta}{\epsilon^2}\right) \\
& =  2\min_{m\in[n]}
\left(\frac{1}{m}\sum_{i=1}^m\frac{1}{\tau_i}\right)^{-1}
\left(\frac{78L\Delta}{m\alpha\epsilon^2} + \frac{76L\Delta}{\epsilon^2}\right) \\
&=  \fO\left(\min_{m\in[n]}
\left(\frac{1}{m}\sum_{i=1}^m\frac{1}{\tau_i}\right)^{-1}
\left(
\frac{L\Delta}{\epsilon^{2}}
+
\frac{L\Delta}{m\epsilon^{2}}
\left(\frac{\sigma}{\epsilon}\right)^{\frac{p}{p-1}}
\right)\right).
\end{split}
\end{align*}
\end{proof}

\subsection{The Proof of Corollary \ref{lem:universal_time_recursion_blocks}}

We first introduce the following lemma for the strategy of delay threshold for the universal computation model.
\begin{lem}[{\citet[Lemma 5.1]{pmlr-v267-maranjyan25b}}]
\label{lem:universal_time}
Under Assumption~\ref{ass:universalmodel}, we let $R$ be the delay threshold in Theorem \ref{thm:main_mom_tight}. Assume that some iteration starts at time $T_0$, then the $R$ consecutive iterative updates of Algorithm \ref{alg:ringmaster_nsgd_mom_server} starting from $T_0$ will be performed before the time
\begin{align*}
T(R,T_0):=
\min\left\{
T \ge 0: \frac{1}{4}\sum_{i=1}^{n}\int_{T_0}^{T} \upsilon_i(\tau)\,{\rm d}\tau \ge R
\right\}.
\end{align*}
\end{lem}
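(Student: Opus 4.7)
This is Lemma~5.1 of \citet{pmlr-v267-maranjyan25b}, and the proof I propose reproduces its structure. The task is to show that, once an iteration starts at time $T_0$, at least $R$ accepted updates have occurred by the time $T(R,T_0)$ at which the aggregate computation work on $[T_0,T(R,T_0)]$ reaches $4R$ units. The plan is a supply-versus-demand counting argument.

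First I would quantify supply. By Assumption~\ref{ass:universalmodel}, worker $i$ completes exactly $\lfloor V_i(T)-V_i(T_0)\rfloor$ stochastic gradient computations on $[T_0,T]$, so the total number $N(T)$ of gradients returned to the server on this interval satisfies $N(T)\ge\sum_{i=1}^n(V_i(T)-V_i(T_0))-n$, where the $-n$ absorbs one partial straddling computation per worker. Second, I would decompose $N(T)=M(T)+D(T)$ into the number accepted (which equals the server's iteration-count growth on $[T_0,T]$, since each accept advances the server by exactly one iteration) and the number discarded by the delay-threshold rule.

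Third, and most delicately, I would bound $D(T)$. The decisive observation is that any discarded message from worker $i$ is witnessed by the server having advanced by at least $R$ accepted iterations between the request and the return of that message; in particular, two successive discards on the same worker force the server to re-accumulate $R$ accepts between them, and a discarded message whose request was issued inside $[T_0,T]$ forces the server to accumulate $\ge R$ accepts entirely inside $[T_0,T]$. Combined with the fact that at time $T_0$ each worker has at most one request in flight, these observations yield a bound of the form $D(T)\le\phi(M(T),n,R)$, where $\phi$ is a concrete affine function that packages both the per-worker discard limit and the initial in-flight bookkeeping. Substituting into $N(T)=M(T)+D(T)$ and evaluating at $T=T(R,T_0)$, where by construction $\sum_i(V_i(T)-V_i(T_0))=4R$, then forces $M(T(R,T_0))\ge R$.

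The main obstacle I expect is calibrating the constant $\tfrac14$ exactly. A naive amortization that simply sums the per-worker bound $D_i(T)\le 1+M(T)/R$ only closes the inequality in the regime $R\gtrsim n$; to cover all $R$ and $n$, the argument must charge each discard either to a specific accept executed during that discarded message's lifetime (tying one unit of wasted work to one unit of server progress, with multiplicity at most three in the worst case) or to one of the at most $n$ in-flight requests present at $T_0$. The factor $\tfrac14$ in the definition of $T(R,T_0)$ is precisely what is needed to pay for the $3M(T)$ discards coming from the charging plus the $O(n)$ boundary losses from floors and straddling computations at the two endpoints.
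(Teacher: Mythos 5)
First, note that the paper does not prove this statement at all: it is imported verbatim as \citet[Lemma~5.1]{pmlr-v267-maranjyan25b} and used as a black box in the proof of Corollary~\ref{lem:universal_time_recursion_blocks}, so there is no internal argument to compare yours against. Judged on its own terms, your supply-versus-demand skeleton (aggregate work on $[T_0,T]$ $\to$ completed gradients $\to$ accepted plus discarded) is the right frame, and your observation that a discard certifies that the server advanced by at least $R$ iterations between the request and the return of that message is the correct structural fact about the threshold rule.

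The gap is in your Step~3, which is exactly the step you leave unexecuted. Your own observations already pin down the discard count: a discarded message whose request was issued \emph{inside} $[T_0,T]$ certifies $\ge R$ accepts inside the window (in which case the lemma is already proved), so one may assume every discard in the window comes from the at most one computation per worker in flight at $T_0$, giving $D(T)\le n$ outright. The discard count is therefore not the obstruction, and the proposed charging scheme (discards charged to accepts with ``multiplicity at most three'') is aimed at the wrong quantity. The genuine difficulty in the regime $n\gg R$ is on the \emph{supply} side: $4R$ units of aggregate work spread over $n\gg R$ workers need not complete even a single gradient after $T_0$ (each worker may receive far less than one unit of work), so the lower bound $N(T)\ge\sum_i\bigl(V_i(T)-V_i(T_0)\bigr)-n$ is vacuous and $M(T)\ge R$ cannot be extracted from the identity $N=M+D$ plus boundary terms. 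You acknowledge that the naive amortization only closes when $R\gtrsim n$, but the fix you propose does not address the complementary regime, and as written the argument does not establish the constant $\tfrac14$ (or any constant) uniformly in $n$ and $R$. Completing the proof requires either crediting the work already invested in the computations in flight at $T_0$, or restricting attention to the subset of workers that actually complete computations in the window --- which is how the floor/harmonic-mean bookkeeping of \citet{tyurin2025tight} (cf.\ Lemma~\ref{lem:tyurin25_G1}) enters the original argument.
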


We now provide the proof of Corollary \ref{lem:universal_time_recursion_blocks}.
\begin{proof}
We follow the parameter settings in Theorem~\ref{thm:main_mom_tight} and focus on the small $\epsilon>0$ such that $\epsilon \le \sqrt{2L\Delta}$. 
Following the derivation of \eqref{eq:KR_new}, we have
\begin{align*}
K \le R \times \left\lceil \frac{K}{R}\right\rceil
\le
R \times \frac{74L\Delta}{\epsilon^2}
\end{align*}
and the algorithm can find an $\epsilon$-stationary point with at most $K$ rounds of updates on the server.

We define
\begin{align*}
\bar K := \left\lceil\frac{74L\Delta}{\epsilon^2} \right\rceil,
\end{align*}
then we can partition all $K$ rounds of updates on the server into $\bar K$ epochs, where each epoch contains at most $R$ consecutive rounds of updates.

According to Lemma \ref{lem:universal_time}, we know that Algorithm \ref{alg:ringmaster_nsgd_mom_server} can finish the first epoch of updates on the server before the time
\begin{align*}
T_1 := T(R,0)
= \min\left\{T\ge 0: \frac{1}{4}\sum_{i=1}^n \int_{0}^{T} \upsilon_i(\tau) \,{\rm d} \tau\ge R\right\}.
\end{align*}
Since the algorithm will finish the first epoch at most $T_1$ seconds, it will start to perform the second epoch before the time~$T_1$. 
Again using Lemma \ref{lem:universal_time}, Algorithm \ref{alg:ringmaster_nsgd_mom_server} can finish the second epoch before the time
\begin{align*}
T_2 := T(R,T_1)
= \min\left\{T\ge 0: \frac{1}{4}\sum_{i=1}^n \int_{T_1}^{T} \upsilon_i(\tau)\,{\rm d} \tau\ge R\right\}.
\end{align*}
By analogy, Algorithm \ref{alg:ringmaster_nsgd_mom_server} can finish all the $\bar K$ epochs before the time 
\begin{align*}
T_{\bar K} := T(R,T_{\bar K-1})
= \min\left\{T\ge 0: \sum_{i=1}^n \left[\frac{1}{4}\int_{T_{\bar K-1}}^{T} \upsilon_i(\tau)\,{\rm d} \tau\right]\ge R\right\},
\end{align*}
which finishes the proof.
\end{proof}

\section{The Proofs for Results in Section \ref{sec:lower-bound}}\label{sec:lb_asgd_heavytail}

In this section, we construct the hard instances to provide the lower complexity bounds of asynchronous stochastic first-order oracle algorithm for both fixed and universal models.
Specifically, we extend the zero-chain hard instance \cite{arjevani2023lower} and the asynchronous progress bound \cite{tyurin2023optimal} to the stochastic first-order oracle under the $p$-BCM condition (Assumption \ref{ass:pBCM}).

\subsection{Preliminaries for Lower Bound Analysis}

We first introduce the notation for the progress index.

\begin{dfn}[Progress index]
\label{dfn:prog_index}
For a vector $x=[x_{(1)},\dots,x_{(d)}]^\top\in\R^{d}$ and threshold $\alpha\ge 0$, we define
\begin{align*}
\operatorname{prog}_{\alpha}(x)
:=
\max\bigl\{i\in \{0,1,\dots,d\} : |x_{(i)}|>\alpha\bigr\}.
\end{align*}
We also denote $x_{(0)}\equiv 1$, $\operatorname{prog}_{\alpha}(0)=0$, and $\operatorname{prog}(x):=\operatorname{prog}_0(x)$ for convention.
\end{dfn}

We then introduce the zeroth-chain function as follows \citep{arjevani2023lower}.
\begin{lem}[{\citet[Lemma 2]{arjevani2023lower}}]
\label{lem:hard_chain}
For given integer $d\geq 1$ and $x=[x_{(1)},\dots,x_{(d)}]\in\R^d$, we define
\begin{equation}
\label{eq:HT_def}
H_d(x)
:=
-\Psi(1) \Phi(x_{(1)})
+
\sum_{i=2}^{d}\Bigl(\Psi(-x_{(i-1)}) \Phi(-x_{(i)})-\Psi(x_{(i-1)}) \Phi(x_{(i)})\Bigr),
\end{equation}
where 
\begin{equation}
\label{eq:Psi_def}
\Psi(t)
:=
\begin{cases}
0, & t \le \dfrac12, \\[0.35cm]
\exp\left(1-\dfrac{1}{(2t-1)^2}\right), & t > \dfrac12,
\end{cases}
\qquad\text{and}\qquad
\Phi(t)
:=
\sqrt{{\rm e}}\displaystyle\int_{-\infty}^{t} \exp\left(-\frac{\tau^2}{2} \right)\, {\rm d}\tau.
\end{equation}
Then we have
\begin{enumerate}[label=(\alph*)]
    \item $H_d(0) - \inf_{x\in\BR^d} H_d(x) \le 12 d$;
    \item $H_d$ is $152$-smooth;
    \item for all $x \in \R^d$, it holds $\Norm{\nabla H_d(x)}_\infty \le 23$;
    \item for all $x \in \R^d$, it holds
    \begin{equation}
    \label{eq:prog_control}
    \operatorname{prog}_0 (\nabla H_d(x)) \ \le \ \operatorname{prog}_{1/2}(x) + 1;
    \end{equation}
    \item if $\operatorname{prog}_1 (x) < d$, then
    \begin{equation}
    \label{eq:grad_lb_hard}
    \Norm{\nabla H_d(x)} \ \ge \ 1.
    \end{equation}
\end{enumerate}
\end{lem}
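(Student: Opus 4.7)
The plan is to verify the five assertions by direct computation, exploiting the carefully chosen properties of the auxiliary functions $\Psi$ and $\Phi$. First I would record elementary facts: $\Psi$ is smooth with $\Psi(t)=\Psi'(t)=0$ for all $t\le 1/2$, $\Psi$ is monotone with $\lim_{t\to\infty}\Psi(t)={\rm e}$, and $\Psi,\Psi',\Psi''$ are all bounded with explicit constants (the exponential term decays fast enough that all derivatives are bounded); similarly $\Phi$ is smooth and strictly increasing with $\Phi(-\infty)=0$ and $\Phi(\infty)=\sqrt{{\rm e}}\sqrt{2\pi}$, and $\Phi'(t)=\sqrt{{\rm e}}\exp(-t^2/2)$ is bounded. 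These universal numerical bounds drive every subsequent estimate.

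For (a), evaluate $H_d(0)=-\Psi(1)\Phi(0)$ since $\Psi(0)=0$ makes every term in the sum vanish at the origin, and then argue that each summand is bounded below by a fixed constant (independent of $x$ and of $d$); summing across the $d$ terms gives $\inf H_d\ge H_d(0)-12d$ after a careful numerical bookkeeping that loses at most $12$ per coordinate. For (b) and (c), write out $\partial_i H_d(x)$ explicitly: each coordinate of the gradient is a finite combination of the bounded quantities $\Psi(\pm x_{i-1})\Phi'(\pm x_i)$ and $\Psi'(\pm x_i)\Phi(\pm x_{i+1})$, and the Hessian is tridiagonal with entries that similarly factor into products of $\{\Psi,\Psi',\Psi''\}$ and $\{\Phi,\Phi',\Phi''\}$; bounding each factor by its sup norm yields the $\infty$-norm bound $23$ and the Lipschitz constant $152$ (the specific constants come from plugging in the sup norms of the auxiliary functions).

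The zero-chain property (d) is the conceptual heart. The $i$-th coordinate of $\nabla H_d(x)$ collects only the two terms of the sum that depend on $x_i$, namely those indexed by $i$ and $i+1$. The term with index $i+1$ involves $\Psi(\pm x_i)\Phi'(\pm x_{i+1})$ and the term with index $i$ involves $\Psi'(\pm x_{i-1})$ or $\Psi(\pm x_{i-1})\Phi'(\pm x_i)$; either way, a factor of the form $\Psi(\pm x_{i-1})$ or $\Psi'(\pm x_{i-1})$ appears. Since $\Psi$ and $\Psi'$ both vanish on $(-\infty,1/2]$, whenever $|x_{i-1}|\le 1/2$ the $i$-th component of the gradient is identically zero, which is exactly $\operatorname{prog}_0(\nabla H_d(x))\le\operatorname{prog}_{1/2}(x)+1$. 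For (e), set $k=\operatorname{prog}_1(x)+1\le d$, so $|x_k|\le 1$ while by definition there is some previous activated coordinate; a quantitative lower bound $\Psi(t)\ge\Psi(1)=1$ for $t\ge 1$ together with the explicit lower bound on $\Phi'$ over the bounded region $|x_k|\le 1$ produces $|\partial_k H_d(x)|\ge 1$, which yields $\|\nabla H_d(x)\|\ge 1$.

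The main obstacle is not conceptual but bookkeeping: one must track the numerical constants $12$, $23$, and $152$ through the explicit formulas for $\Psi,\Psi',\Psi''$ and $\Phi,\Phi',\Phi''$. The clean zero-chain structure of $H_d$ ensures that all cross-coordinate couplings are nearest-neighbor, so the Hessian estimate never blows up with $d$, and the vanishing of $\Psi$ on $(-\infty,1/2]$ is what couples items (d) and (e) together into a ``one coordinate of progress per gradient query'' obstruction, which is precisely what the subsequent asynchronous lower bounds will exploit.
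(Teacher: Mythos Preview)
The paper does not prove this lemma at all; it is quoted verbatim as a black-box citation of \citet[Lemma~2]{arjevani2023lower}, and every subsequent use of it in the lower-bound section simply invokes items (a)--(e) without re-deriving them. Your sketch is therefore not being compared against anything in the present paper, but it is essentially the outline of the original proof in \citet{arjevani2023lower}, and the strategy is correct.

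One small imprecision in your argument for (d): the contribution to $\partial_i H_d(x)$ coming from the summand with index $i+1$ carries a factor $\Psi'(\pm x_i)$, not $\Psi'(\pm x_{i-1})$ as you wrote. Consequently the statement ``whenever $|x_{i-1}|\le 1/2$ the $i$-th component of the gradient is identically zero'' is not literally true. What you actually need is that for $i>\operatorname{prog}_{1/2}(x)+1$ one has \emph{both} $|x_{i-1}|\le 1/2$ (which kills the $\Psi(\pm x_{i-1})\Phi'(\pm x_i)$ terms) \emph{and} $|x_i|\le 1/2$ (which kills the $\Psi'(\pm x_i)\Phi(\pm x_{i+1})$ terms); both inequalities hold in that range, so the conclusion stands. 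Similarly, in (e) the case $k=1$ needs to be handled through the distinguished first term $-\Psi(1)\Phi'(x_{(1)})$ rather than through a ``previous activated coordinate.'' These are bookkeeping fixes, not conceptual gaps.
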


Note that the zero-chain instance enforces the gradient on coordinate $j$ be zero unless the previous coordinate $j-1$ has already been sufficiently activated. 
In the stochastic setting, we would like to amplify the hardness by placing a \emph{probabilistic gate} on the \emph{newly-revealed} coordinate of stochastic gradient, i.e., even when the next coordinate becomes admissible, the oracle reveals its gradient signal only with probability $q$
(and otherwise returns $0$ on that coordinate). 
For the $p$-BCM gradient noise (Assumption~\ref{ass:pBCM}),
we would like to design the appropriate probabilistic gate to construct the stochastic first-order oracle.

For the fixed computation model shown in Definition \ref{dfn:time_multi_oracle}, at round $k$, the algorithm perform~$A_k$ to select a wall-clock time~$t_{k+1}$ such that $t_{k+1}\ge t_k$ and worker $i_{k+1}$ to output
\begin{align*}    
(t_{k+1}, i_{k+1}, x_k)\ :=\ A_k(g_1,\dots,g_k),
\end{align*}
where $x_k\in\BR^d$ satisfies
\begin{align*}
\mathrm{supp}(x_k) \ \subseteq \ \bigcup_{s \in [k]} \mathrm{supp}(g_s)    
\end{align*}
and $\{g_1,\dots,g_k\}$ are observed gradient estimates  from the outputs of the past oracle calls.
Intuitively, the progress index set~$\operatorname{prog}(x_k)$ can increase only after the algorithm has observed a gradient with a newly nonzero coordinate beyond the current support set.

We then formally define the algorithm class for the universal computation model as follows.

\begin{dfn}\label{dfn:time_multi_oracle_universal}
An asynchronous stochastic first-order
oracle algorithm $\fA=\{A_k\}_{k\geq 0}$ over $n$ workers under the universal computation model satisfies the following constraints.

\begin{itemize}[leftmargin=0.3cm,topsep=-0.1cm,itemsep=-0.1cm]
\item Each worker $i\in[n]$ maintains an internal state
\[
s_i=(s_{t,i}, s_{x,i}, s_{q,i})\in \R_{\ge 0}\times \R^{d} \times \{0,1\},
\]
where $s_{t,i}\in\BR$ stores  start time of 
current computation,
$s_{x,i}\in\BR^{d}$ stores the corresponding requested point,
and $s_{q,i}$ stores the state of worker $i$, i.e., $s_{q,i}=0$ when worker $i$ is idle and $s_{q,i}=1$ when worker $i$ is busy.
\item Each worker $i\in[n]$ is associated with the time sensitive stochastic first-order oracle $O^{\widehat{\nabla} F}_{V_i}$, which takes input as the current time $t$, requested point $x$, and current state~$s_i$ and output  $(s_{i,+},g)=O^{\widehat{\nabla} F}_{V_i}$ such that 
\begin{align*}
\bigl(s_i^{+}, g\bigr)=
O^{\widehat{\nabla} F}_{V_i}(t,x,s_i,\xi):=
\begin{cases}
\bigl((t,x,1),0\bigr),
& s_{q,i}=0,\\[2pt]
\bigl((s_{t,i},s_{x,i},1),0\bigr),
& s_{q,i}=1 \ \text{and}\ V_i(t)-V_i(s_{t,i})<1,\\[2pt]
\bigl((0,0,0),\widehat{\nabla} F(s_{x,i};\xi)\bigr),
& s_{q,i}=1 \ \text{and}\ V_i(t)-V_i(s_{t,i})\ge 1.
\end{cases}
\end{align*}
where $\xi$ is sampled from distribution~$\fD$ and $\widehat{\nabla} F(s_{x,i};\xi)$ can be accessed by the universal computation model described by Assumption \ref{ass:universalmodel} and satisfies the conditions
$\mathbb{E} [ \widehat{\nabla} F(s_{x,i}; \xi)] = \nabla f(s_{x,i})$ and 
$\mathbb{E}[\|\widehat{\nabla} F(s_{x,i}; \xi) - \nabla f(s_{x,i}) \|^p] \le \sigma^p$ for some $p\in(1,2]$ and~$\sigma>0$. 
This implies the stochastic gradient computation started at time $s_{t,i}$ is completed by time~$t\ge s_{t,i}$ once $V_i(t)\ge V_i(s_{t,i})+1$.
\item The asynchronous stochastic first-order
oracle algorithm $\fA=\{A_k\}_{k\ge 0}$ initializes $t_0=0$ and $x_0\in\R^{d}$, then performs oracles $\{O^{\widehat{\nabla} F}_{\tau_i}\}_{i=1}^n$ 
during the iterations. 
At round $k$, after observing gradient estimates $\{g_1,\dots,g_k\}$ from the outputs of the past oracle calls, 
the algorithm perform~$A_k$ to select a wall-clock time $t_{k+1}$ such that $t_{k+1}\ge t_k$ and worker $i_{k+1}$ to output.
\end{itemize}
\end{dfn}

For any wall-clock time $t\ge 0$, define the time-indexed set of available iterates
\[
\fS_t := \bigl\{ k\in\BN_0 : t_k \le t\bigr\}.
\]
for both the fixed computation model (Definition \ref{dfn:time_multi_oracle}) and the universal computation model (Definition \ref{dfn:time_multi_oracle_universal}).

\subsection{The Proof of Theorem \ref{thm:lb_asgd_pBCM}}\label{appendix:lower-fixed}

We first introduce the following lemma for the fixed computation model \cite{tyurin2023optimal}.

\begin{lem}[{\citet[Lemma D.2]{tyurin2023optimal}}]
\label{lem:async_progress}
Let $\fA$ be any algorithm that follows Definition~\ref{dfn:time_multi_oracle} associated with a differentiable function $f:\R^{d}\to\R$ such that
\[
\operatorname{prog}(\nabla f(x)) \ \le\ \operatorname{prog}(x)+1
\]
for all $x\in\BR^d$
and the stochastic first-order oracle  $\widehat{\nabla} F$ such that 
\begin{equation}
\label{eq:tyurin_mapping_21}
\left(\widehat{\nabla} F(x;\xi)\right)_{(j)}
\ =\
\left(\nabla f(x)\left(1+\mathbf{1}\!\left[j>\operatorname{prog}(x)\right]\left(\frac{\xi}{q}-1\right)\right)\right)_{(j)},
\end{equation}
for all $x\in\R^{d}$, $\xi\in\{0,1\}$, and $j\in[d]$, where we take $\xi\in\mathrm{Bernoulli}(q)$ for some $q\in(0,1]$.
Then with probability at least~$1-\delta$, it holds
\[
\inf_{k\in \fS_t}\mathbf{1}\!\left[\operatorname{prog}(x_k)<d\right]\ \ge\ 1
\] 
for all wall-clock times $t$ such that
\begin{equation}
\label{eq:async_time_bound}
t \ \le\ \frac{1}{24}\min_{m\in[n]}
\left[
\left(\sum_{i=1}^m \frac{1}{\tau_i}\right)^{-1}
\left(\frac{1}{q}+m\right)
\right]
\left(\frac{d}{2}+\log\delta\right),
\end{equation}
where $\delta\in(0,1)$, $\vone[\cdot]$ is the indicator function, and $\fS_t \;:=\; \bigl\{ k\in\BN_0 : t_k \le t\bigr\}$.
\end{lem}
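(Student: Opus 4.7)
The plan is to lower-bound the first time at which $\operatorname{prog}(x_k)$ can reach $d$ by combining a structural observation about the oracle with a worker-throughput count and a Bernoulli concentration bound. First I would extract the structural implication of the oracle: for any point $x$ with $\operatorname{prog}(x)=j$, the zero-chain hypothesis $\operatorname{prog}(\nabla f(x))\le j+1$ together with the masking formula \eqref{eq:tyurin_mapping_21} yields
\begin{equation*}
\mathrm{supp}\bigl(\widehat{\nabla} F(x;\xi)\bigr) \ \subseteq \ \{1,\dots,j+\xi\},
\end{equation*}
because for $\xi=0$ the indicator zeros out coordinate $j+1$, while for $\xi=1$ the true gradient itself has no mass past coordinate $j+1$. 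Combined with the algorithm's constraint $\mathrm{supp}(x_k)\subseteq\bigcup_{s\le k}\mathrm{supp}(g_s)$, this shows that $k\mapsto\operatorname{prog}(x_k)$ is nondecreasing and that each unit increment must be witnessed by at least one completed oracle call returning $\xi=1$ whose requested point already sat at the current frontier.

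Next I would introduce the epoch times $T_j := \inf\{t\ge 0:\max_{k\in\fS_t}\operatorname{prog}(x_k)\ge j\}$ and reduce the lemma to a lower bound on $T_d$. Let $N(t)$ count oracle completions across all workers by time $t$, and let $\{\xi_\ell\}_{\ell\le N(t)}$ be the associated Bernoulli$(q)$ samples, which are mutually independent conditional on the filtration of completion events since each coin is drawn at the moment of completion, independently of the past algorithmic history. The structural claim then yields the stochastic domination
\begin{equation*}
\max_{k\in\fS_t}\operatorname{prog}(x_k) \ \le \ \sum_{\ell=1}^{N(t)} \xi_\ell,
\end{equation*}
so it suffices to show that this Bernoulli sum stays strictly below $d$ with probability at least $1-\delta$ within the claimed time budget.

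For the throughput count, each worker $i$ completes at most $\lfloor t/\tau_i\rfloor+1$ queries by time $t$. The $\min_{m\in[n]}$ in the statement emerges by optimizing against the adversarial scheduling: for every $m\in[n]$ and after sorting $\tau_1\le\cdots\le\tau_n$, restricting attention to the fastest $m$ workers caps the useful contribution by $N_m(t)\le t\sum_{i=1}^m 1/\tau_i + m$, while workers slower than $\tau_m$ are dominated by the faster ones on the relevant time scales. A multiplicative Chernoff bound on the Bernoulli sum then implies that having $d$ successes by time $t$ occurs with probability at most $\delta$ provided $q\,N_m(t)\lesssim d/2 - \log(1/\delta)$. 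Solving for $t$ and minimizing over $m$ recovers the threshold in the statement up to absolute constants, with the $(d/2+\log\delta)$ term arising directly from the Chernoff exponent.

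The main obstacle is the asynchronous stale-work effect: a worker that started computing at a stale iterate may deliver a $\xi=1$ output long after the global frontier has moved past it, in which case that success is ``wasted'' and cannot advance progress. The naive Chernoff count above treats every completion as maximally useful, which would over-estimate true progress. The resolution is a coupling argument that dominates the true progression by a best-case auxiliary process in which every completion is credited as if it had been issued at the current frontier; this can only enlarge progress and therefore preserves the desired lower bound on $T_d$. Making this coupling precise under an arbitrary non-anticipating scheduling is the technically delicate step, but once it is in place the remainder is a standard Chernoff tail estimate on a sum of i.i.d.\ Bernoullis.
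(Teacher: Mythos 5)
Your structural reduction (each unit of progress must be witnessed by a completed oracle call with $\xi=1$) and the deterministic throughput count per worker are both correct, but the central domination
$\max_{k\in\fS_t}\operatorname{prog}(x_k)\le\sum_{\ell=1}^{N(t)}\xi_\ell$
is too lossy to prove the stated bound. By crediting every $\xi=1$ completion anywhere in the system as a progress increment, you discard the fact that the coordinates must be revealed \emph{sequentially}: the completion that reveals coordinate $r$ must come from a query issued \emph{after} coordinate $r-1$ was revealed, and that query then needs at least one full computation time to return. Concretely, take $q=1$ and $n$ workers all with $\tau_i=\tau$. Your aggregate count gives $N(t)\le n\lfloor t/\tau\rfloor$, hence only $t\gtrsim \tau d/n$, whereas \eqref{eq:async_time_bound} asserts $t\gtrsim \tau d$ (the minimum over $m$ of $(\sum_{i\le m}1/\tau_i)^{-1}(1/q+m)$ is $\Theta(\tau)$ here, not $\Theta(\tau/n)$). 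The term $m\cdot(\sum_{i=1}^m 1/\tau_i)^{-1}$ inside the minimum is exactly the sequential-bottleneck contribution that a single Chernoff bound on the pooled Binomial cannot recover; your "workers slower than $\tau_m$ are dominated" heuristic does not repair this, since the loss occurs even with identical workers. Relatedly, the "coupling" you flag as the delicate step is not where the difficulty lies — crediting stale successes is valid for an upper bound on progress, it is simply too generous.

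The proof that works (this is how \citet[Lemma D.2]{tyurin2023optimal} argues, and how this paper proves the universal-model analogue, Lemma~\ref{lem:async_progress_universal}) keeps the per-coordinate structure: let $\vartheta_r\sim\mathrm{Geometric}(q)$ be the number of completions needed to reveal coordinate $r$ once coordinate $r-1$ is revealed, define recursively the earliest times $t_r$ at which the workers can jointly produce $\vartheta_r$ completions all started after $t_{r-1}$, and invoke the concentration bound (Lemma~\ref{lem:tyurin25_G2}) showing that with probability $1-\delta$ at least $d/2+\log\delta$ of the $\vartheta_r$ exceed $1/(4q)$. Each such coordinate forces an epoch of duration at least $\Omega\bigl(\min_m(\sum_{i=1}^m 1/\tau_i)^{-1}(1/q+m)\bigr)$, and summing the epochs yields \eqref{eq:async_time_bound}. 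Your Binomial/Chernoff route would need to be restructured coordinate-by-coordinate in this way before the remaining steps (which are otherwise fine) go through.
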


We now provide the proof of Theorem \ref{thm:lb_asgd_pBCM}.
\begin{proof}

We follow the function $H_d:\BR^d\to\BR$ defined in Lemma~\ref{lem:hard_chain} to construct the instance
\begin{equation}
\label{eq:lb_scaled_f_explicit_final}
f(x)\ :=\ \frac{L\lambda^2}{152}\ H_d\left(\frac{x}{\lambda}\right),
\end{equation}
where $\lambda>0$.
We then complete the proof by considering the function class, the oracle class, and the time complexity.

\paragraph{Part I: The Function Class.} 
We verify the properties of $f$ as follows.
\begin{enumerate}
\item We take
\begin{equation}
\label{eq:lb_d_choice_explicit_final}
d:= \floor{\frac{38\Delta}{3L\lambda^2}},
\end{equation}
then it holds  
\[
f(0)-\inf_{x\in\BR^d} f(x)
=\frac{L\lambda^2}{152}\bigl(H_d(0)-\inf_{x\in\BR^d} H_d(x)\bigr)
\le \frac{L\lambda^2}{152}\cdot 12d \leq \Delta,
\]
where the first inequality is based on Lemma~\ref{lem:hard_chain}(a)
\item For all $x,y\in\BR^d$, we have
\[
\Norm{\nabla f(x)-\nabla f(y)}
=\frac{L\lambda}{152}\Norm{\nabla H_d\left(\frac{x}{\lambda}\right)-\nabla H_d\left(\frac{y}{\lambda}\right)}
\le L\Norm{x-y},
\]
where the inequality is based on Lemma~\ref{lem:hard_chain}(b), i.e., the function $H_d$ is 152-smooth, 
Hence, the function $f$ is $L$-smooth. 
\end{enumerate}

\paragraph{Part II: The Oracle Class.} We then follow Lemma \ref{lem:async_progress} to construct the stochastic gradient estimator $\widehat{\nabla} F(x;\xi)$ for the objective $f(x)$.
Specifically, we set $\fD$ as Bernoulli distribution with parameter $q\in(0,1]$ and define each coordinate of $\widehat{\nabla} F(x;\xi)$ as
\begin{align*}
\left(\widehat{\nabla} F(x;\xi)\right)_{(j)}
=
\left(\nabla f(x)\left(1+\mathbf{1} \left[j>\operatorname{prog}(x)\right]\left(\frac{\xi}{q}-1\right)\right)\right)_{(j)},
\end{align*}
where $\xi\sim {\rm Bernoulli}(q)$, $j\in[d]$, and $\mathbf{1}[\cdot]$ is the indicator function.
We now verify the stochastic gradient estimator $\widehat{\nabla} F(x;\xi)$ is unbiased and satisfies the $p$-BCM assumption.

\begin{enumerate}
\item The setting $\xi\sim {\rm Bernoulli}(q)$ implies $\BE[\xi]=q$, which means
\begin{align*}
\BE\left[\left(\widehat{\nabla} F(x;\xi)\right)_{(j)}\right]
= \BE\left[\left(\nabla f(x)\left(1+\mathbf{1} \left[j>\operatorname{prog}(x)\right]\left(\frac{\xi}{q}-1\right)\right)\right)_{(j)}\right]
= \nabla f(x)_{(j)}
\end{align*}
for all $x\in\BR^d$ and $j\in[d]$.
Hence, stochastic the gradient estimator $\widehat{\nabla} F(x;\xi)$ is unbiased. 
\item According to Lemma~\ref{lem:hard_chain}(d), we have
\[
\operatorname{prog}_0(\nabla H_d(u))\le \operatorname{prog}_{1/2}(u)+1
\le \operatorname{prog}_0(u)+1
\]
for all $u\in\R^d$.
This implies
\[
\operatorname{prog}(\nabla f(x))\le \operatorname{prog}(x)+1,
\]
since $f(x)=(L\lambda^2/152)H_d\left({x}/{\lambda}\right)$ and $\operatorname{prog}(\cdot)=\operatorname{prog}_0(\cdot)$.
This implies the entries of 
\begin{align*}
\zeta(x;\xi):=\widehat{\nabla} F(x;\xi) - \nabla f(x)
\end{align*}
can be nonzero only on coordinates such that $j \leq \operatorname{prog}(x) + 1$.
On the other hand, the term $\mathbf{1} \left[j>\operatorname{prog}(x)\right]$ in above equation  implies the entries of 
$\zeta(x;\xi)$ can be nonzero only on coordinate such that $j>\operatorname{prog}(x)$.
Therefore, there is at most one entry of $\zeta(x;\xi)$ can be non-zeroth, which leads to
\begin{align}\label{eq:lower-bcm}
 \E\left[\Norm{\widehat{\nabla} F(x;\xi)-\nabla f(x)}^p\right]  
\leq    \Norm{\nabla f(x)}_\infty^p\BE\left[\left|\frac{\xi}{q}-1\right|^p\right].
\end{align}
For the term $\Norm{\nabla f(x)}_{\infty}$, we have
\begin{align}\label{eq:lower-bcm1}
\Norm{\nabla f(x)}_\infty
=\frac{L\lambda}{152}\Norm{\nabla H_d(x/\lambda)}_\infty
\le \frac{L\lambda}{152}\cdot 23
=\frac{23L\lambda}{152}    
\end{align}
for all $x\in\BR^d$, where the inequality is based on Lemma~\ref{lem:hard_chain}(c). 

For the term $\BE[|\xi/q-1|^p]$, the settings of $\xi\sim\mathrm{Bernoulli}(q)$ and $p\in(1,2]$ imply
\begin{align}\label{eq:lower-bcm2}
\E\left[\left|\frac{\xi}{q}-1\right|^p\right]
= q\left(\frac{1-q}{q}\right)^p+(1-q)
\le q^{1-p}+1
\le \frac{2}{q^{p-1}}.
\end{align}

Combining equations (\ref{eq:lower-bcm}), (\ref{eq:lower-bcm1}), and (\ref{eq:lower-bcm2}), we have
\begin{equation}
\label{eq:lb_pBCM_check_explicit_final}
\E\left[\Norm{\widehat{\nabla} F(x;\xi)-\nabla f(x)}^p\right]
\le \left(\frac{23L\lambda}{152}\right)^p\cdot \frac{2}{q^{p-1}}.
\end{equation}
Hence, taking
\begin{equation}
\label{eq:lb_q_choice_explicit_final}
q
\ :=\
\min\left\{
\left(\frac{2^{\frac1p}\cdot 23L\lambda}{152\sigma}\right)^{\frac{p}{p-1}},1
\right\},
\end{equation}
for equation \eqref{eq:lb_pBCM_check_explicit_final} guarantees 
\begin{align*}
    \E\left[\Norm{\widehat{\nabla} F(x;\xi)-\nabla f(x)}^p\right]  
\leq  \sigma^p
\end{align*}
for all $x\in\BR^d$.
\end{enumerate}

\paragraph{Part III: The Time Complexity.}
We now fix
\begin{equation}
\label{eq:lb_lambda_choice_explicit_final}
\lambda\ := \frac{608\eps}{L}.
\end{equation}
Combining with \eqref{eq:lb_d_choice_explicit_final}, we obtain 
\begin{align}\label{eq:dim}
d=\left\lfloor\frac{38\Delta}{3L\lambda^2}\right\rfloor
=\left\lfloor\frac{L\Delta}{29184\eps^2}\right\rfloor.  
\end{align}

We then show that 
\begin{equation}
\label{eq:lb_grad_indicator_explicit_final}
\Norm{\nabla f(x)}
\ \ge\ 4\eps\cdot \mathbf{1}\!\left[\operatorname{prog}(x)<d\right].
\end{equation}
We only need to consider the case of $\operatorname{prog}(x)<d$, which implies  $\operatorname{prog}_0(x/\lambda)=\operatorname{prog}(x)<d$. Therefore, it holds
\begin{align*}
\operatorname{prog}_1\left(\frac{x}{\lambda}\right) \le \operatorname{prog}_0\left(\frac{x}{\lambda}\right)<d.    
\end{align*}
Based on definition in equation (\ref{eq:lb_scaled_f_explicit_final}), we have
\[
\Norm{\nabla f(x)}
=\frac{L\lambda}{152}\Norm{\nabla H_d\left(\frac{x}{\lambda}\right)}
\ge \frac{L\lambda}{152}
=\frac{L}{152}\cdot \frac{608\eps}{L}
=4\eps,
\]
where the inequality is based on Lemma \ref{lem:hard_chain} (e) since $\operatorname{prog}(x)<d$.

Therefore, we have
\begin{equation}
\label{eq:lb_bridge_explicit_final}
\inf_{k\in \fS_t}\Norm{\nabla f(x_k)}
\ \ge\
4\eps\cdot \inf_{k\in \fS_t}\mathbf{1} \left[\operatorname{prog}(x_k)<d\right],
\end{equation}
where $\fS_t=\{k:\ t_k\le t\}$ collects all indices on the server whose query times do not exceed $t$

According to Lemma~\ref{lem:async_progress} with $\delta=1/2$,  with probability at least $1/2$, it holds
\begin{align}\label{eq:progxkd1}    
\inf_{k\in \fS_t}\mathbf{1} \left[\operatorname{prog}(x_k)<d\right]\ \ge\ 1
\end{align}
for all $t$ satisfying
\begin{align}\label{eq:t24}
t \ \le\ \frac{1}{24}\min_{m\in[n]}
\left[
\left(\sum_{i=1}^m \frac{1}{\tau_i}\right)^{-1}
\left(\frac{1}{q}+m\right)
\right]
\left(\frac{d}{2}+\log\frac{1}{2}\right).    
\end{align}
Combining equations (\ref{eq:lb_bridge_explicit_final}) and (\ref{eq:progxkd1}) means for all $t$ satisfying \eqref{eq:t24} holds
\[
\E\!\left[\inf_{k\in \fS_t}\Norm{\nabla f(x_k)}\right]
\ \ge\
4\eps\cdot \Pr\!\left(\inf_{k\in \fS_t}\mathbf{1}[\operatorname{prog}(x_k)<d]\ge 1\right)
\ \ge\ 2\eps.
\]

Hence, if we can find an $\epsilon$-stationary point in expectation by query times do not exceed $t$, i.e., $\E[\inf_{k\in \fS_t}\|\nabla f(x_k)\|]\le \eps$, then the time $t$ must violate \eqref{eq:t24}, i.e.,
\begin{align}\label{eq:t24b}
t > \frac{1}{24}\min_{m\in[n]}
\left[
\left(\sum_{i=1}^m \frac{1}{\tau_i}\right)^{-1}
\left(\frac{1}{q}+m\right)
\right]
\left(\frac{d}{2}+\log\frac{1}{2}\right).    
\end{align}
Note that for all $\epsilon\le \sqrt{{L\Delta}/{87552}}=\fO(\sqrt{L\Delta})$, equation (\ref{eq:dim}) implies
\begin{align}\label{eq:final-d}
d=\left\lfloor\frac{L\Delta}{29184\eps^2}\right\rfloor \geq 3.  
\end{align}
Note that combining equations (\ref{eq:lb_q_choice_explicit_final}) and (\ref{eq:lb_lambda_choice_explicit_final}) achives
\begin{align}\label{eq:final-q}
q = \min\left\{
\left(\frac{2^{\frac1p}\cdot 23L}{152\sigma}\cdot \frac{608\eps}{L}\right)^{\frac{p}{p-1}},1
\right\}
= \min\left\{
\left(\frac{92\cdot 2^{\frac1p}\eps}{\sigma}\right)^{\frac{p}{p-1}},1
\right\}.
\end{align}
Combining equations (\ref{eq:t24b}), (\ref{eq:final-d}), and (\ref{eq:final-q}) implies finding an $\epsilon$-stationary point in expectation requires the time complexity at least 
\begin{align*}
t > \frac{1}{24}\min_{m\in[n]}
\left[
\left(\sum_{i=1}^m \frac{1}{\tau_i}\right)^{-1}
\left(\frac{1}{q}+m\right)
\right]
\left(\frac{d}{2}+\log\frac{1}{2}\right)
= \Omega\left(\min_{m\in[n]}
\left(\frac{1}{m}\sum_{i=1}^m \frac{1}{\tau_i}\right)^{-1}
\left(
\frac{L\Delta}{\eps^{2}}
+
\frac{\sigma^{\frac{p}{p-1}} L\Delta}{m \cdot \eps^{\frac{3p-2}{p-1}}}
\right)\right).
\end{align*}
\end{proof}

\subsection{The Proof of Theorem \ref{thm:lb_asgd_pBCM}}
\label{sec:lb_asgd_heavytail_universal}

This subsection provides the lower bound for the universal computation model.
The main ideas of our construction are similar to the ideas for the fixed computation model in Section \ref{appendix:lower-fixed}, i.e., we also use the function instance $H_d$ defined in Lemma \ref{lem:hard_chain} \citep{arjevani2023lower} and the Bernoulli-gated oracle construction (Part II in the proof of Theorem \ref{thm:lb_asgd_pBCM}) to construct the hard instance.
The main difference is that we replace the fixed computation model (Assumption \ref{ass:time})
with the universal computation model (Assumption \ref{ass:universalmodel}) \citep{tyurin2025tight}, which allows time-varying computation dynamics and leads to the different progress-to-time conversion.

Recall that Assumption \ref{ass:universalmodel} suppose the number of stochastic gradients can be calculated from a time $t_0$ to a time $t_1$ at worker $i\in[n]$ is the Riemann integral of the computation power followed by the floor operation, i.e., 
\begin{align*}
\left\lfloor \int_{t_0}^{t_1} \upsilon_i(\tau)\,{\rm d}\tau \right\rfloor=\lfloor V_i(t_1)-V_i(t_0)\rfloor,
\end{align*}
where $\upsilon_i:\BR_+\to\BR_+$ is the computation power of worker $i$ which is continuous almost everywhere
and
\begin{align*}
 V_i(t):=\int_{0}^{t} \upsilon_i(\tau)\,{\rm d}\tau
 \end{align*}
defined on $t\in\R_{\ge 0}$ is the cumulative computation work function of worker $i$.

We first present the following two auxiliary lemmas (Appendix~G of \citet{tyurin2025tight}).

\begin{lem}[{\citet[Lemma~G.1]{tyurin2025tight}}]
\label{lem:tyurin25_G1}
Let $V_i:\R_{+}^{\infty}\to \R_{+}^{\infty}$ be continuous and non-decreasing function for all $i\in[n]$.
For all $\kappa, b_1,\dots,b_n\in \R_{+}^{\infty}$, the minima of the sets
\[
\Bigl\{t\ge 0:\ \sum_{i=1}^{n}\lfloor V_i(t)-b_i\rfloor \ge \kappa\Bigr\}
\quad\text{and}\quad
\left\{t\ge 0:\ \left(\frac{1}{n}\sum_{i=1}^{n}\frac{1}{\lfloor V_i(t)-b_i\rfloor}\right)^{-1}\ge \kappa\right\}
\]
exist, where we using the convention $\min\{\emptyset\}=+\infty$.
\end{lem}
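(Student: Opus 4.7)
The plan is to show that both sets are of the form $\{t \ge 0 : \phi(t) \ge \kappa\}$ where $\phi$ is right-continuous and non-decreasing on $\R_+$ (with values in the extended non-negative reals). Once this structure is established, the existence of the minimum is immediate: if the set is empty, the minimum is $+\infty$ by the stated convention; otherwise, taking $t^\star := \inf\{t\ge 0:\phi(t)\ge\kappa\}$ and any sequence $t_k \downarrow t^\star$ with $\phi(t_k)\ge\kappa$, right-continuity gives $\phi(t^\star) = \lim_{k\to\infty}\phi(t_k)\ge\kappa$, so $t^\star$ attains the infimum.

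First I would record the elementary fact that for each $i\in[n]$, the map $t\mapsto \lfloor V_i(t)-b_i\rfloor$ is right-continuous and non-decreasing. Monotonicity follows because $V_i$ is non-decreasing and $\lfloor\cdot\rfloor$ is non-decreasing. For right-continuity, observe that the floor function satisfies $\lfloor x\rfloor=m$ for $x\in[m,m+1)$, so $\lfloor\cdot\rfloor$ is right-continuous; composing with the continuous $V_i$ preserves right-continuity. The possible downward jumps from the left at the points where $V_i(t)-b_i$ equals an integer are harmless for our purposes.

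For the first set, define $\phi_1(t):=\sum_{i=1}^n\lfloor V_i(t)-b_i\rfloor$. As a finite sum of right-continuous non-decreasing functions, $\phi_1$ is itself right-continuous and non-decreasing, and the argument above yields the minimum. For the second set, define
\[
\phi_2(t) := \left(\frac{1}{n}\sum_{i=1}^n\frac{1}{\lfloor V_i(t)-b_i\rfloor}\right)^{-1},
\]
with the extended-arithmetic conventions $1/0=+\infty$ and $1/(+\infty)=0$. Since $\lfloor V_i(t)-b_i\rfloor$ is non-negative-integer-valued and non-decreasing, the pointwise reciprocal $1/\lfloor V_i(t)-b_i\rfloor$ is non-increasing on $\R_+$, its finite sum is non-increasing, and taking the outer reciprocal yields a non-decreasing $\phi_2$. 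Right-continuity of $\phi_2$ propagates from right-continuity of each $\lfloor V_i(\cdot)-b_i\rfloor$ through $x\mapsto 1/x$ (continuous on $(0,+\infty]$ in the extended topology) and through the final reciprocal; at any $t$ where one of the floors equals $0$, the sum equals $+\infty$ and $\phi_2(t)=0$, and right-continuity at such $t$ amounts to checking that $\phi_2(s)\to 0$ as $s\downarrow t$ whenever $\lfloor V_i(t)-b_i\rfloor=0$, which holds since by right-continuity the floor stays at $0$ on a right neighborhood (unless it jumps up, in which case $\phi_2(s)$ is finite and positive, still consistent because we only need $\phi_2(t)=\lim_{s\downarrow t}\phi_2(s)$ from a decreasing sequence approach, and monotonicity ensures the limit is bounded above by any later value). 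Applying the infimum argument to $\phi_2$ then finishes the second half.

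The main technical wrinkle is precisely the extended-real bookkeeping for $\phi_2$ when one or more floors are zero, so that individual reciprocals are $+\infty$ and $\phi_2=0$; I would handle this by verifying right-continuity separately on the two regimes $\{t:\min_i\lfloor V_i(t)-b_i\rfloor=0\}$ and $\{t:\min_i\lfloor V_i(t)-b_i\rfloor\ge 1\}$, using that on the latter $\phi_2$ is a genuine continuous-in-$x$ rational function of right-continuous integer-valued inputs, while on the former $\phi_2\equiv 0$ on a right neighborhood whenever a zero floor persists to the right, which it does by right-continuity of the floors. Everything else is a routine monotonicity-plus-right-continuity argument.
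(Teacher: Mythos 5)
The paper does not prove this lemma at all: it is imported verbatim by citation from \citet[Lemma~G.1]{tyurin2025tight}, so there is no in-paper argument to compare yours against. Your proof is the natural one and is essentially correct: writing each set as $\{t\ge 0:\phi(t)\ge\kappa\}$ with $\phi$ non-decreasing and right-continuous, and noting that the infimum of such a superlevel set is attained, is exactly the right mechanism. A cleaner way to dispatch the right-continuity of $\phi_2$ (and to avoid your extended-arithmetic case analysis) is to observe that each $t\mapsto\lfloor V_i(t)-b_i\rfloor$ is integer-valued, non-decreasing, and right-continuous, hence \emph{constant on a right-neighborhood of every point}; therefore $\phi_2$ is also constant on a right-neighborhood of every point, and right-continuity is automatic in every regime. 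This also exposes the one genuinely confused spot in your write-up: in the zero-floor case you hedge with ``unless it jumps up, in which case $\phi_2(s)$ is finite and positive, still consistent'' --- that branch is vacuous (a right-continuous integer-valued function cannot jump immediately to the right of $t$), and if it did occur it would in fact \emph{contradict} right-continuity rather than be consistent with it. Two further cosmetic gaps: you assert the floors are non-negative, which the hypotheses as stated do not guarantee (they do in the paper's application, where $b_i=V_i(T_{K-1})$ and $t\ge T_{K-1}$), and the degenerate extended-real cases ($\kappa=+\infty$, $V_i(t)=b_i=+\infty$) are left implicit; neither affects the substance.
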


\begin{lem}[{\citet[Lemma~G.2]{tyurin2025tight}}]
\label{lem:tyurin25_G2}
Let $T\ge 1$ and $\{\vartheta_i\}_{i=1}^{T}$ be geometric random variables such that for given $\vartheta_1,\dots,\vartheta_{i-1}$, the random variable $\vartheta_{i}$ follows the geometric distribution
\[
\vartheta_i \sim \mathrm{Geometric} \, \bigl(\pi_{i,\vartheta_1,\dots,\vartheta_{i-1}}\bigr)
\]
where the parameter $\pi_{i,\vartheta_1,\dots,\vartheta_{i-1}}\in(0,1]$
depends only on $\vartheta_1,\dots,\vartheta_{i-1}$ for all $i\in[T]$.
Then we have
\[
\Pr\left(
\sum_{i=1}^{T}\mathbf{1}\left[\vartheta_i>\frac{1}{4\pi_{i,\vartheta_1,\dots,\vartheta_{i-1}}}\right]
\le \frac{T}{2}+\log \delta
\right)
\le \delta
\]
for all $\delta\in(0,1]$.
\end{lem}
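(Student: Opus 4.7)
The plan is to recast the claim as a conditional Chernoff bound. Writing $X_i := \mathbf{1}[\vartheta_i > 1/(4\pi_i)]$ with $\pi_i$ abbreviating $\pi_{i,\vartheta_1,\dots,\vartheta_{i-1}}$, and $\mathcal{F}_i := \sigma(\vartheta_1,\dots,\vartheta_i)$, the parameter $\pi_i$ is $\mathcal{F}_{i-1}$-measurable and $\vartheta_i\mid\mathcal{F}_{i-1}\sim\mathrm{Geometric}(\pi_i)$. Hence $\{X_i\}$ is an adapted Bernoulli sequence, and the proof reduces to two ingredients: (i) a uniform almost-sure lower bound $p_i := \Pr(X_i=1\mid\mathcal{F}_{i-1}) \ge 3/4$, and (ii) a Markov-on-exponential inequality with a sharply chosen tilt.

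For (i), I split on the value of $\pi_i$. If $\pi_i > 1/4$, then $1/(4\pi_i) < 1$, and since $\vartheta_i \ge 1$ almost surely, $p_i = 1$. If $\pi_i \le 1/4$, the geometric tail gives $p_i = (1-\pi_i)^{\lfloor 1/(4\pi_i)\rfloor} \ge (1-\pi_i)^{1/(4\pi_i)}$, using $1-\pi_i < 1$ and $\lfloor\cdot\rfloor \le \cdot$. Setting $g(\pi) := (1-\pi)^{1/(4\pi)}$, a short computation gives $(\log g)'(\pi) = -[\pi/(1-\pi)+\log(1-\pi)]/(4\pi^2)$; the bracket is non-negative on $(0,1)$ because its derivative $\pi/(1-\pi)^2$ is non-negative and it vanishes at $\pi=0$. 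Thus $g$ is decreasing on $(0,1/4]$ and $g(\pi)\ge g(1/4)=3/4$.

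For (ii), apply the exponential tilt $\lambda = 1$. Since the map $p\mapsto 1-p(1-e^{-1})$ is decreasing,
\[
\E\!\left[e^{-X_i}\,\big|\,\mathcal{F}_{i-1}\right] \;=\; 1-p_i(1-e^{-1}) \;\le\; \tfrac{1}{4}+\tfrac{3}{4e}.
\]
Iterating the tower property yields $\E\bigl[e^{-\sum_{i=1}^T X_i}\bigr] \le (1/4+3/(4e))^T$, and Markov's inequality gives
\[
\Pr\!\left(\sum_{i=1}^{T} X_i \le \tfrac{T}{2}+\log\delta\right) \;\le\; e^{T/2+\log\delta}\!\left(\tfrac{1}{4}+\tfrac{3}{4e}\right)^{\!T} \;=\; \delta\cdot\!\left(\tfrac{e^{1/2}+3e^{-1/2}}{4}\right)^{\!T}.
\]

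The final step is the numerical inequality $e^{1/2}+3e^{-1/2}\le 4$, equivalently $e+3\le 4e^{1/2}$, which holds since $e+3\approx 5.72 \le 6.60 \approx 4e^{1/2}$. Thus the bracket is $\le 1$ and the probability is bounded by $\delta$ for every $T\ge 1$. I expect the delicate point to be the tight coupling of three constants: the threshold $1/(4\pi_i)$ is calibrated precisely so that the sharp lower bound $p_i\ge 3/4$ (attained at $\pi_i=1/4$) combines with the Markov tilt $\lambda=1$ to make the $T$-dependent prefactor collapse. A looser bound such as $p_i\ge 1/2$, or a different $\lambda$, would not reproduce the clean $T/2+\log\delta$ shape on the right-hand side.
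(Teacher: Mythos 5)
Your proof is correct. Note that the paper itself does not prove this lemma; it is imported verbatim as Lemma~G.2 of \citet{tyurin2025tight}, so you have supplied a self-contained argument where the paper offers only a citation. The argument you give is the natural one and checks out in every step: the tail bound $p_i=(1-\pi_i)^{\lfloor 1/(4\pi_i)\rfloor}\ge 3/4$ (using the monotonicity of $(1-\pi)^{1/(4\pi)}$), the conditional MGF bound $\E[e^{-X_i}\mid\mathcal F_{i-1}]\le \tfrac14+\tfrac{3}{4e}$ via the tower property, and the final numerical inequality $e^{1/2}+3e^{-1/2}\le 4$ are all verified. The only convention you implicitly fix is that $\mathrm{Geometric}(\pi)$ is supported on $\{1,2,\dots\}$ (so $\vartheta_i\ge 1$ a.s., which handles the case $\pi_i>1/4$); this matches how the lemma is invoked in the proof of Lemma~\ref{lem:async_progress_universal}, where $\vartheta_r$ counts the number of oracle calls needed to reveal a coordinate.
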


We now provide the universal dynamics counterpart of Lemma~\ref{lem:async_progress} as follows.
\begin{lem}\label{lem:async_progress_universal}
Let $\fA$ be any algorithm that follows Definition~\ref{dfn:time_multi_oracle_universal} associated with a differentiable function $f:\R^{d}\to\R$ such~that
\[
\operatorname{prog}(\nabla f(x)) \ \le\ \operatorname{prog}(x)+1
\]
for all $x\in\BR^d$
and the stochastic first-order oracle  $\widehat{\nabla} F$ such that 
\begin{equation}
\label{eq:tyurin_mapping_212}
\left(\widehat{\nabla} F(x;\xi)\right)_{(j)}
\ =\
\left(\nabla f(x)\left(1+\mathbf{1}\!\left[j>\operatorname{prog}(x)\right]\left(\frac{\xi}{q}-1\right)\right)\right)_{(j)},
\end{equation}
for all $x\in\R^{d}$, $\xi\in\{0,1\}$, and $j\in[d]$, where we take $\xi\sim\mathrm{Bernoulli}(q)$ for some $q\in(0,1]$.
Then with probability at least~$1-\delta$, it holds
\[
\inf_{k\in \fS_t}\mathbf{1}\!\left[\operatorname{prog}(x_k)<d\right]\ \ge\ 1
\] 
for all wall-clock times $t$ such that  $t\le T_{\left\lfloor d/2+\log\delta\right\rfloor}$, where $\delta\in(0,1)$, $\fS_t=\bigl\{k\in\BN_0:t_k \le t\bigr\}$, and $\vone[\cdot]$ is the indicator function.
Here, the we recursively defined sequence $\{T_K\}_{K\ge 0}$ such that $T_0= 0$ and
\begin{align}\label{eq:TK_universal_recursion_aligned}
    T_K &:= \min\left\{ T \ge 0 : \sum_{i=1}^{n} \int_{T_{K-1}}^{T} \upsilon_i(\tau) \,{\rm d}\tau \ge \left\lceil\frac{1}{4q}\right\rceil \right\}
\end{align}
for all $K\geq 1$.
\end{lem}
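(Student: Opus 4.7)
The plan is to adapt the fixed-model argument of Lemma~\ref{lem:async_progress} by replacing its fixed-rate time budget with the universal-model epoch budget encoded in the sequence $\{T_K\}_{K\ge 0}$ defined in \eqref{eq:TK_universal_recursion_aligned}. The first step is to reduce the event $\{\operatorname{prog}(x_k)\ge d\}$ to a purely combinatorial event on the Bernoulli outcomes. Ordering all completed oracle calls across workers by their completion times and attaching to them the i.i.d.\ $\mathrm{Bernoulli}(q)$ trials $\{\xi_s\}_{s\ge 1}$ that appear in \eqref{eq:tyurin_mapping_212}, the zero-chain property $\operatorname{prog}(\nabla f(x))\le \operatorname{prog}(x)+1$ and the support-inclusion constraint $\mathrm{supp}(x_k)\subseteq \bigcup_{s\le k}\mathrm{supp}(g_s)$ from Definition~\ref{dfn:time_multi_oracle_universal} jointly yield the deterministic bound $\operatorname{prog}(x_k)\le \sum_{s=1}^{M(t_k)}\xi_s$, where $M(t)$ denotes the number of oracle calls completed by wall-clock time $t$. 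This reduction is identical to the one used in the proof of Lemma~\ref{lem:async_progress} and is independent of the computation model.

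Next, I would bound $M(T_K)$ using the universal computation model: because each worker $i$ consumes exactly one unit of cumulative work $V_i$ per completed oracle call, $M(T_K)\le \sum_{i=1}^n V_i(T_K)$. Telescoping the recursion \eqref{eq:TK_universal_recursion_aligned}, using continuity of each $V_i(\cdot)$ and Lemma~\ref{lem:tyurin25_G1} to guarantee that each $T_K$ attains its defining minimum, gives $\sum_{i=1}^n V_i(T_K)=K\lceil 1/(4q)\rceil$ and hence $M(T_K)\le K\lceil 1/(4q)\rceil$. I would then encode the number of trials required for $d$ successes as a sum of geometrics: letting $\vartheta_j$ be the gap between the $(j-1)$th and $j$th successes in $\{\xi_s\}$, the variables $\vartheta_1,\dots,\vartheta_d$ are i.i.d.\ $\mathrm{Geometric}(q)$, and the event that $d$ successes appear within the first $M(T_K)$ trials is $\{\sum_{j=1}^d \vartheta_j \le M(T_K)\}$. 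Applying Lemma~\ref{lem:tyurin25_G2} with $T=d$ and $\pi_i\equiv q$, with probability at least $1-\delta$ at least $\lfloor d/2+\log\delta\rfloor+1$ of the $\vartheta_j$'s satisfy $\vartheta_j>1/(4q)$, which, since they are integer-valued, forces $\vartheta_j\ge \lceil 1/(4q)\rceil$. Choosing $K:=\lfloor d/2+\log\delta\rfloor$ then yields
\[
\sum_{j=1}^d \vartheta_j \;\ge\; (K+1)\,\lceil 1/(4q)\rceil \;>\; K\,\lceil 1/(4q)\rceil \;\ge\; M(T_K),
\]
so fewer than $d$ successes occur by time $T_K$. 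Combined with the first step, this gives $\operatorname{prog}(x_k)<d$ for every $k\in \fS_{T_K}$ with probability at least $1-\delta$, which is exactly the claim.

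The main obstacle I expect is the bookkeeping that underlies $M(T_K)\le \sum_{i=1}^n V_i(T_K)$: because a worker may be mid-computation at an epoch boundary, the number of completions inside a single epoch is not exactly $\lceil 1/(4q)\rceil$, and a naive per-epoch count would either double-count or miss the fractional work straddling two epochs. The remedy is to avoid per-epoch accounting entirely and instead bound only the \emph{global} quantity $M(T_K)$ by the telescoped sum $\sum_i V_i(T_K)$, which is insensitive to how the compute is distributed within each epoch; Lemma~\ref{lem:tyurin25_G1} ensures every $T_K$ is well defined so the telescoping is clean. All other ingredients, namely the zero-chain progress control, the Bernoulli-gated oracle, and the geometric tail bound, carry over verbatim from the fixed-model proof of Lemma~\ref{lem:async_progress} and from Lemma~\ref{lem:tyurin25_G2}.
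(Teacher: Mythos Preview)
Your proposal is correct and takes a genuinely different route from the paper's argument. The paper proceeds by an explicit induction on the ``large-$\vartheta$'' indices: it introduces auxiliary random times
\[
t_r:=\min\Bigl\{t\ge 0:\ \textstyle\sum_{i=1}^n\lfloor V_i(t)-V_i(t_{r-1})\rfloor\ge \vartheta_r\Bigr\},
\]
shows $\tilde t_r\ge t_r$, extracts indices $j_1<\cdots<j_{K}$ with $\vartheta_{j_k}\ge\lceil 1/(4q)\rceil$ via Lemma~\ref{lem:tyurin25_G2}, and then proves $t_{j_k}\ge T_k$ by induction on $k$ using monotonicity of each $V_i$. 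You instead bypass the induction entirely with a global count: bound the total number of completed oracle calls by $M(T_K)\le\sum_i V_i(T_K)=K\lceil 1/(4q)\rceil$ (exact telescoping via continuity of the $V_i$), and then compare directly to $\sum_{j=1}^d\vartheta_j\ge(K+1)\lceil 1/(4q)\rceil$. Your argument is shorter and conceptually cleaner; the paper's induction is more robust in that it never needs equality in the telescoping and works directly with the floored work increments that match the oracle definition. One minor quibble: Lemma~\ref{lem:tyurin25_G1} is stated for floored quantities, so it does not literally apply to the non-floored recursion \eqref{eq:TK_universal_recursion_aligned}; but the existence of each $T_K$ (and the exact equality you need) follows immediately from continuity of $\sum_i V_i$, so this is cosmetic.
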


\begin{proof}
For each $r\in[d]$, we let $\tilde t_r$ be the (random) wall-clock time when the $r$th coordinate becomes revealed, i.e.,
\begin{align*}
\tilde t_r
:= \min\Bigl\{t\ge 0:\text{there exists}~k\in\fS_t~\text{such that}~\operatorname{prog}(x_k)\ge r\Bigr\}
\end{align*}
with the convention $\tilde t_0:=0$ and $\min\{\emptyset\} = +\infty$.

We define $\vartheta_r$ as the number of completed oracle outputs needed to reveal coordinate $r$ given that
coordinates $1,\dots,r-1$ have already been revealed. 
Recall that the function satisfies 
$\operatorname{prog}(\nabla f(x))\le \operatorname{prog}(x)+1$ for all $x\in\BR^d$, then revealing the new coordinate only occurs when we take the stochastic first-order oracle $\widehat{\nabla} F(x;\xi)$ defined in \eqref{eq:tyurin_mapping_212} with $\xi=1$.
Therefore, conditionally on $\vartheta_{1},\dots,\vartheta_{r-1}$, we have
\begin{align*}
\vartheta_r \sim \mathrm{Geometric}(q),
\end{align*}
since all stochastic first-order oracle calls are performed independently.

Next, we connect the above random variable $\vartheta_r$ to the time complexity under the universal computation model.
We define the sequence of (random) times $\{t_r\}_{r\geq 0}$ as
\begin{align}
\label{eq:def_tr_like_tyurin}
t_r
:= \begin{cases}
0, & r=0, \\
\min\left\{t\ge 0:\sum_{i=1}^{n}
\left\lfloor
V_i(t)-V_i(t_{r-1})
\right\rfloor
\ge
\vartheta_r
\right\}, & r \geq 1,
\end{cases}
\end{align}
where the minimum exists by Lemma~\ref{lem:tyurin25_G1}.
According to Assumption~\ref{ass:universalmodel},
in the interval $[\tilde t_{r-1},t]$,
all workers together can access at most
\begin{align*}
\sum_{i=1}^n \lfloor V_i(t)-V_i(\tilde t_{r-1})\rfloor    
\end{align*}
stochastic first-order oracles.
Hence, no algorithm can reveal coordinate $r$ before time $t_r$, i.e.,
\begin{align}
\label{eq:tilde_ge_t}
\tilde t_r \ge t_r,
\end{align}
for all $r\in[d]$.

Apply to Lemma~\ref{lem:tyurin25_G2} with $T=d$ and $\pi_{r,\vartheta_1,\dots,\vartheta_{r-1}}=q$ for all $r\in[d]$, then we have
\begin{align*}
\Pr\!\left(
\sum_{r=1}^{d}\mathbf{1}\!\left[\vartheta_r>\frac{1}{4q}\right]
\le \frac{d}{2}+\log \delta
\right)
\le \delta
\end{align*}
for any $\delta\in(0,1]$.
Therefore, with probability at least $1-\delta$, it holds
\begin{align}
\label{eq:many_large_vartheta_univ}
\sum_{r=1}^{d}\mathbf{1}\!\left[\vartheta_r>\frac{1}{4q}\right]
> \frac{d}{2}+\log\delta.
\end{align}

If $\left\lfloor d/2+\log\delta \right\rfloor\le 0$, then the conclusion is trivial since
\begin{align*}
T_{\left\lfloor {d}/{2}+\log\delta \right\rfloor}=T_0=0.    
\end{align*}
Hence, we focus on the case of $\left\lfloor {d}/{2}+\log\delta \right\rfloor\ge 1$.
Based on the expression on the left-hand side of \eqref{eq:many_large_vartheta_univ},
there exist indices $j_1,j_2,\dots,j_{\left\lfloor {d}/{2}+\log\delta \right\rfloor}$ such that $1\le j_1<j_2<\cdots<j_{\left\lfloor {d}/{2}+\log\delta \right\rfloor}\le d$ and
\begin{align}\label{eq:theta-4q}
\vartheta_{j_k}>\frac{1}{4q}
\end{align}
for all $k=1,\dots,\left\lfloor {d}/{2}+\log\delta \right\rfloor$.
Since the variable $\vartheta_{j_k}$ is an integer, it holds
\begin{align}
\label{eq:vartheta_ge_ceil}
\vartheta_{j_k}\ge\left\lceil\frac{1}{4q}\right\rceil.
\end{align}

We now compare the times $\{t_r\}$ defined in \eqref{eq:def_tr_like_tyurin} with the sequence
$\{T_k\}$ defined in \eqref{eq:TK_universal_recursion_aligned}.
Specifically, we target to prove 
\begin{align}\label{eq:tjk_ge_Tk}
t_{j_k}\ge T_k
\end{align}
holds for all $k=1,\dots,\left\lfloor {d}/{2}+\log\delta \right\rfloor$ by induction.

\paragraph{The Induction Base ($k=1$).}
Since we require $j_1>1$, \eqref{eq:def_tr_like_tyurin} implies
\begin{align}\label{eq:tj1}
t_{j_1}
= \min\left\{t\ge 0:\sum_{i=1}^{n}
\left\lfloor
V_i(t)-V_i(t_{j_1-1})
\right\rfloor
\ge
\vartheta_{j_1}
\right\}.
\end{align}
Recall that $V_i(t_{j_1-1})\ge 0$ and each $V_i$ is non-decreasing, then for all $t\ge 0$, we have 
\begin{align*}
V_i(t)\ge V_i(t)-V_i(t_{j_1-1}),     
\end{align*}
that is
\begin{align*}
\left\lfloor V_i(t)\right\rfloor
\ge
\left\lfloor V_i(t)-V_i(t_{j_1-1})\right\rfloor.
\end{align*}
Therefore, at time $t=t_{j_1}$, it holds
\begin{align}\label{eq:V14q}
\sum_{i=1}^n \left\lfloor V_i(t_{j_1})\right\rfloor
\ge
\sum_{i=1}^n \left\lfloor V_i(t_{j_1})-V_i(t_{j_1-1})\right\rfloor
\ge
\vartheta_{j_1}
\ge
\left\lceil\frac{1}{4q}\right\rceil,
\end{align}
where the second step is based on \eqref{eq:def_tr_like_tyurin} and the last step is based on \eqref{eq:vartheta_ge_ceil}. 
Since \eqref{eq:TK_universal_recursion_aligned} defines~
$T_0=0$ and $T_1=\min\{t\ge 0:\sum_{i=1}^n \lfloor V_i(t)-V_i(T_0)\rfloor \ge \lceil 1/(4q)\rceil\}$, we have $t_{j_1}\ge T_1$ based on \eqref{eq:V14q} and definition (\ref{eq:tj1}).

\paragraph{Induction Step $(k\geq 2)$.}
We assume $t_{j_{k-1}}\ge T_{k-1}$ for some $k\geq 2$.
Recall that our algorithm class means revealing coordinate $j_k$ requires revealing
coordinate $j_k-1$ at first, then we have
\begin{align*}
t_{j_k-1}\ge t_{j_{k-1}}\ge T_{k-1}.    
\end{align*}
By monotonicity of each $V_i$, for all $i\in[n]$, it holds
\begin{align*}
V_i(t_{j_k-1}) \ge V_i(T_{k-1}).
\end{align*}
Hence, for all $t\ge 0$, we have
\begin{align*}
V_i(t)-V_i(T_{k-1})
\ge
V_i(t)-V_i(t_{j_k-1}),
\end{align*}
that is
\begin{align*}
\left\lfloor V_i(t)-V_i(T_{k-1})\right\rfloor
\ge
\left\lfloor V_i(t)-V_i(t_{j_k-1})\right\rfloor.
\end{align*}
For the time $t=t_{j_k}$, the above inequality and \eqref{eq:def_tr_like_tyurin} gives
\begin{align*}
\sum_{i=1}^n \left\lfloor V_i(t_{j_k})-V_i(T_{k-1})\right\rfloor
\ge
\sum_{i=1}^n \left\lfloor V_i(t_{j_k})-V_i(t_{j_k-1})\right\rfloor
\ge
\vartheta_{j_k}
\ge
\left\lceil\frac{1}{4q}\right\rceil,
\end{align*}
where the last step again uses \eqref{eq:vartheta_ge_ceil}.
By the definition of $T_k$ in \eqref{eq:TK_universal_recursion_aligned}, we have $t_{j_k}\ge T_k$.
Hence, we complete the induction to finish the proof of  \eqref{eq:tjk_ge_Tk}.

Finally, since $j_{\left\lfloor {d}/{2}+\log\delta \right\rfloor}\le d$ and revealing coordinate $d$ requires revealing
coordinate $j_{\left\lfloor {d}/{2}+\log\delta \right\rfloor}$ first, we have
\begin{align*}
\tilde t_d \ge \tilde t_{j_{\left\lfloor {d}/{2}+\log\delta \right\rfloor}}
\overset{(\ref{eq:tilde_ge_t})}{\ge}
t_{j_{\left\lfloor {d}/{2}+\log\delta \right\rfloor}}
\overset{(\ref{eq:tjk_ge_Tk})}{\ge}
T_{\left\lfloor {d}/{2}+\log\delta \right\rfloor}.
\end{align*}
Equivalently, for all $t\le T_{\left\lfloor {d}/{2}+\log\delta \right\rfloor}$,
no available iterate can have $\operatorname{prog}(x_k)\ge d$, i.e., it holds 
\begin{align*}
\inf_{k\in \fS_t}\mathbf{1}\!\left[\operatorname{prog}(x_k)<d\right]\ge 1,
\end{align*}
for all $t\le T_{\left\lfloor {d}/{2}+\log\delta \right\rfloor}$ with probability at least $1-\delta$.
\end{proof}

Finally, we prove Theorem \ref{thm:lb_asgd_pBCM_universal} as follows.
\begin{proof}
We follow the proof of Theorem~\ref{thm:lb_asgd_pBCM} in Appendix \ref{appendix:lower-fixed} to construct the function and stochastic first-order oracle, i.e., we define
\begin{align*}
f(x):=\frac{L\lambda^2}{152}H_d\left(\frac{x}{\lambda}\right),
\qquad
\lambda:=\frac{608\epsilon}{L},
\qquad \text{and} \qquad
d:=\left\lfloor\frac{L\Delta}{29184\epsilon^{2}}\right\rfloor;
\end{align*}
the stochastic first-order oracle  $\widehat{\nabla} F$ is defined as
\begin{equation*}
\left(\widehat{\nabla} F(x;\xi)\right)_{(j)}
\ =\
\left(\nabla f(x)\left(1+\mathbf{1}\!\left[j>\operatorname{prog}(x)\right]\left(\frac{\xi}{q}-1\right)\right)\right)_{(j)},
\end{equation*}
for all $x\in\R^{d}$, $\xi\in\{0,1\}$, and $j\in[d]$, where we take $\xi \sim \mathrm{Bernoulli}(q)$ for some $q\in(0,1]$.
We then consider the algorithm class in Definition \ref{dfn:time_multi_oracle_universal} associated with above $f$ and $\widehat{\nabla} F$.

We then complete the proof by considering the function class, the oracle class, and the time complexity.
\paragraph{The Function Class and the Oracle Class.}

Following the verification in the proof of Theorem \ref{thm:lb_asgd_pBCM} in Appendix \ref{appendix:lower-fixed}, we know that that $f$ is $L$-smooth with $f(0)-\inf_x f(x)\le \Delta$, and that the stochastic first-order oracle $\widehat{\nabla} F$ satisfies the desired $p$-BCM condition with $\sigma>0$  and $p\in(1,2]$.

\paragraph{The Time Complexity.}

Note that the derivation of \eqref{eq:lb_bridge_explicit_final} in Appendix \ref{appendix:lower-fixed} only depends on the constructions of $f$ the objective and stochastic first-order oracle $\widehat{\nabla} F$, so that it also works for the universal computation model, i.e., we have
\begin{equation}\label{eq:state-50}
\inf_{k\in \fS_t}\Norm{\nabla f(x_k)}
\ \ge\
4\eps\cdot \inf_{k\in \fS_t}\mathbf{1} \left[\operatorname{prog}(x_k)<d\right],
\end{equation}
where $\fS_t=\{k:\ t_k\le t\}$ collects all indices on the server whose query times do not exceed $t$.
Applying Lemma~\ref{lem:async_progress_universal} with $\delta=1/2$, we know that with probability at least $1/2$, it holds
\begin{align}\label{eq:final-12}
\inf_{k\in \fS_t}\mathbf{1} \left[\operatorname{prog}(x_k)<d\right]\ge 1
\end{align}
for all $t\le T_{\left\lfloor d/2+\log(1/2)\right\rfloor}$. 

Hence, for all $t\le T_{\left\lfloor d/2+\log(1/2)\right\rfloor}$, we have
\begin{align*}
\E\left[\inf_{k\in \fS_t}\Norm{\nabla f(x_k)}\right]
\ge
4\epsilon\cdot \Pr\!\left(\inf_{k\in \fS_t}\mathbf{1}\!\left[\operatorname{prog}(x_k)<d\right]\ge 1\right)
\ge
2\epsilon
> \epsilon,
\end{align*}
where the first step is based on \eqref{eq:state-50} and the second step is due to \eqref{eq:final-12} holds with probability at least $1/2$.
Consequently, if 
\begin{align*}
\E[\inf_{k\in \fS_t}\|\nabla f(x_k)\|]\le \epsilon
\end{align*}
holds for some $t$, then it is necessary that
\begin{align*}
t> T_{\left\lfloor d/2+\log(1/2)\right\rfloor}.    
\end{align*}
Combining with the definition of $T_K$ and the setting of $d$, we finish the proof.
\end{proof}

\end{document}